\theoremstyle{plain}
\newtheorem{theorem}{Theorem}
\newtheorem{corollary}[theorem]{Corollary}
\newtheorem{lemma}[theorem]{Lemma}
\newtheorem{proposition}[theorem]{Proposition}
\theoremstyle{definition}
\newtheorem{definition}[theorem]{Definition}
\newtheorem{remark}[theorem]{Remark}
\newtheorem*{remark*}{Remark}
\newcommand{\pr}{\mathbf P}
\newcommand{\e}{\mathbf E}
\newcommand{\E}{\mathbf E}
\begin{document}
\title[Persistence of autoregressive sequences with logarithmic tails]
    {Persistence of autoregressive sequences with logarithmic tails}
    \thanks{
        D. Denisov was supported by a Leverhulme Trust Research Project Grant  RPG-2021-105. 
        V. Wachtel was partially supported by DFG.
    }
\author[Denisov]{Denis Denisov}
\address{Department of Mathematics, University of Manchester, Oxford Road, Manchester M13 9PL, UK}
\email{denis.denisov@manchester.ac.uk}
    
\author[Hinrichs]{G\"unter Hinrichs} 
\address{Institut f\"ur Mathematik, Universit\"at Augsburg, 86135 Augsburg, Germany}
\email{Guenter.Hinrichs@math.uni-augsburg.de}

\author[Kolb]{Martin Kolb}
\address{Institut f\"ur Mathematik, Universit\"at Paderborn,
33098 Paderborn, Germany}
\email{kolb@math.uni-paderborn.de}

\author[Wachtel]{Vitali Wachtel}
\address{Faculty of Mathematics, Bielefeld University, Germany}
\email{wachtel@math.uni-bielefeld.de}\begin{abstract}
We consider autoregressive sequences $X_n=aX_{n-1}+\xi_n$ and
$M_n=\max\{aM_{n-1},\xi_n\}$ with a constant $a\in(0,1)$ and with positive, independent and identically distributed innovations $\{\xi_k\}$. It is known that if $\pr(\xi_1>x)\sim\frac{d}{\log x}$ with some $d\in(0,-\log a)$ then the chains $\{X_n\}$ and $\{M_n\}$ are null recurrent. We investigate the tail behaviour of recurrence times in this case of logarithmically decaying tails. More precisely, we show that the tails of recurrence times are regularly varying of index $-1-d/\log a$.
We also prove limit theorems for $\{X_n\}$ and $\{M_n\}$ conditioned to stay over a fixed level $x_0$.\\
Furthermore, we study tail asymptotics for recurrence times 
of $\{X_n\}$ and $\{M_n\}$ in the case when these chains are positive recurrent and the tail of $\log\xi_1$ is subexponential.
\end{abstract}
    
    
    \keywords{Random walk, exit time, harmonic function, conditioned process}
    \subjclass{Primary 60G50; Secondary 60G40, 60F17}
    \maketitle
    {\scriptsize
    }
\section{Introduction.}
Let $\{\xi_n\}_{n\ge 1}$ be 
a sequence of  independent and  identically distributed random variables. 
Let $a\in(0,1)$ be a constant. 
The corresponding AR($1$)-sequence $X=\{X_n\}_{n\ge 0}$ is defined by 
$$
X_{n}:=aX_{n-1}+\xi_{n},\quad n\ge 1,
$$
where the starting position $X_0$ can be either random or deterministic. 

Besides the Markov chain $X$ we shall consider the so-called 
{\it maximal autoregressive sequence} 
$M=\{M_n\}_{n\ge 0}$, where 
$$
M_n=\max\{aM_{n-1},\xi_n\},\quad n\ge1.
$$
The Markov chains $X$ and $M$ have rather similar properties.
If, for example, the innovations are non-negative then these two chains are recurrent, positive recurrent or transient at the same time. More precisely, according to Theorem 3.1 in Zerner~\cite{Zerner18}, the chains $\{X_n\}$ and $\{M_n\}$ are recurrent if and only if
\begin{equation}
\label{eq:recurrence}
\sum_{n=0}^\infty\prod_{m=0}^n\pr(|\xi_1|\le ta^{-m})=\infty
\end{equation}
for every $t$ satisfying $\pr(|\xi_1|\le t)>0$. Furthermore,
$X$ and $M$ are positive recurrent if and only if 
$\e[\log(1+|\xi_1|)]$ is finite.

If the innovations $\{\xi_n\}$ 
take only positive values, then we may define 
$$
\eta_n:=\log_A\xi_n
\quad\text{and}\quad 
R_n:=\log_A M_n,
$$
where $A=a^{-1}$. 
Then the sequence $R=\{R_n\}_{n\ge 0}$ satisfies the recursive relation 
$$
R_n=\max\{R_{n-1}-1,\eta_n\},\quad n\ge1. 
$$
This Markov  chain is a special random exchange process, see Helland and Nilsen~\cite{HN76} for the definition of this class of processes.

In this paper we shall consider the case when the tail of innovations decreases logarithmically. More precisely, the main part of the paper will deal with situation when
\begin{equation}
\label{eq:log.tail}
\pr(\xi_1>x)\sim\frac{d}{\log x}\quad 
\text{as }x\to\infty
\end{equation}
with some constant $d>0$. This is equivalent to 
\begin{equation}
\label{eq:log.tail.2}
\pr(\eta_1>y)\sim\frac{c}{y},\quad c:=\frac{d}{\log A}.
\end{equation}
(We shall explicitly mention one of these two conditions every time we need it.)

Notice also that if $d>0$ then $\e\log(1+\xi_1)=\infty$ and, consequently, 
the chains $\{X_n\}_{n\ge 0}$ and $\{M_n\}_{n\ge 0}$ are not positive recurrent. If \eqref{eq:log.tail} holds then, using the criterion \eqref{eq:recurrence}, we conclude that 
\begin{align*}
&\ \bullet d>\log A\ (c>1)\quad \Rightarrow\quad  
\{X_n\}_{n\ge 0}\text{ and }\{M_n\}_{n\ge 0}\text{ are transient;}\\
&\ \bullet d<\log A\ (c<1)\quad \Rightarrow\quad  
\{X_n\}_{n\ge 0}\text{ and }\{M_n\}_{n\ge 0}\text{ are null-recurrent.}
\end{align*}
In the critical case $d=\log A$ ($c=1$) one has to consider further terms in the asymptotic representation for tails
$\pr(\xi_1>x)$, $\pr(\eta_1>y)$.
Assume that, for some $k\ge0$, 
$$
\pr(\eta_1>y)=\sum_{j=0}^k\frac{1}{y}
\prod_{l=1}^j\frac{1}{\log_{(l)}y}
+(r_k+o(1))\frac{1}{y}\prod_{l=1}^{k+1}\frac{1}{\log_{(l)}y},
\quad y\to\infty,
$$
where $\log_{(l)}x$ is the $l$-th iteration of the logarithm. 
Then, applying \eqref{eq:recurrence} once again, we obtain 
\begin{align*}
&\ \bullet r_k>1\quad \Rightarrow\quad  
\{X_n\}\text{ and }\{M_n\}\text{ are transient;}\\
&\ \bullet r_k<1\quad \Rightarrow\quad  
\{X_n\}\text{ and }\{M_n\}\text{ are null-recurrent.}
\end{align*}

A further similarity between the chains $\{X_n\}_{n\ge 0}$ and $\{M_n\}_{n\ge 0}$ consists in the joint scaling behaviour of these chains. More precisely, 
Buraczewski and Iksanov ~\cite{BurIks15} have shown that if \eqref{eq:log.tail} is valid then 
\begin{equation}
\label{eq:weak.limit.1}
\left(\frac{\log_A X_{nt}}{n}\right)_{t\ge 0}
\Rightarrow Z=(Z_t)_{t\ge 0}
\end{equation}
in the Skorohod $J_1$-topology on the space $D$. 
The limiting process $Z$ is a self-similar Markov process. 
In \cite{BurIks15} it is described with the help of an appropriate Poisson point process.
One can describe this limiting process also via the transition probabilities:
\begin{equation}
\label{eq:trans.prob}
\pr_x((x-t)^+\le Z_t\le y)=\left(\frac{y}{y+t}\right)^c,
\quad y\ge (x-t)^+,\ x\ge0.
\end{equation}

It is easy to see that if $X_0=M_0$ then
$$
M_k\le X_k\le (k+1)M_k\quad\text{for all }k\ge1.  
$$
This implies that \eqref{eq:weak.limit.1} is equivalent to 
\begin{equation}
\label{eq:weak.limit.2}
\left(
\frac{\log_A M_{nt}}{n}
\right)
\Rightarrow Z.
\end{equation}
In its turn, \eqref{eq:weak.limit.2} is equivalent to 
\begin{equation}
\label{eq:weak.limit.3}
\left(
\frac{R_{nt}}{n}
\right)_{t\ge 0}
\Rightarrow Z.
\end{equation}

The main purpose of this paper is to study the asymptotic behaviour of recurrence times 
\begin{align*}
&T_x^{(X)}:=\inf\{k\ge1:\ X_k\le x\},\\
&T_x^{(M)}:=\inf\{k\ge1:\ M_k\le x\},\\
&T_x^{(R)}:=\inf\{k\ge1:\ R_k\le x\}.
\end{align*}

Persistence of auto-regressive processes has attracted a significant attention of many researchers in the recent past, 
but almost all results known in the literature   deal with the case when some power moments
of the innovations $\xi_k$ are finite. It is known that the tail of $T_x^{(X)}$ decreases exponentially fast
$$
-\frac{1}{n}\log\pr(T_x^{(X)}>n)\to\lambda\in(0,\infty)
$$
see \cite{AMZ21}, \cite{HKW19} and references there. If all power moments of innovations are finite then $\pr(T_x^{(X)}>n)\sim Ce^{–\lambda n}$ and the conditional distribution $\pr(X_n\in\cdot|T_x^{(X)}>n)$ converges towards the corresponding quasi-stationary distribution, see \cite{HKW19}.
It is worth mentioning that one can compute the persistence exponent
$\lambda$ in some special cases only. Some examples of autoregressive processes, for which there exist  closed form expressions for
$\lambda$, can be found in \cite{ABRS} and in \cite{AMZ21}. The authors of \cite{AK19} have found a series representation for $\lambda$ in the case of normally distributed innovations. 

In the present paper we concentrate on the case when all power moments of innovations are infinite. This corresponds, as we shall show, to a subexponential decay of the tail of $T_x^{(X)}$. 

We start with the null-recurrent case. More precisely we consider first 
the innovations which satisfy \eqref{eq:log.tail}. 
As we have mentioned before, the chains $\{X_n\}_{n\ge 0}$, 
$\{M_n\}_{n\ge 0}$ and $\{R_n\}_{n\ge 0}$ have the same 
scaling limit $Z$ in this case. For that reason we first collect some crucial for us properties of the process $Z$.
\begin{theorem}
\label{thm:Z} 
(a) If $c\le1$ then the process $Z$ is recurrent. If $c<1$ then the stopping time $T_0^{(Z)}:=\inf\{s>0: Z_s=0\text{ or }Z_{s-}=0\}$ is almost surely finite and, furthermore,
\begin{equation}
\label{eq:tau_tail}
\pr\left(T_0^{(Z)}>t\big|Z_0=z\right)=
\left\{ 
\begin{array}{cl}
1, &t<z\\
\frac{1}{B(c,1-c)}\int_0^{z/t}(1-u)^{c-1}u^{-c}du, &t\ge z.
\end{array}
\right.
\end{equation}
(b) The function $u(z)=z^{1-c}$ is harmonic for $Z$ killed at $T_0^{(Z)}$:
$$
u(z)=\e_z[u(Z_t);T_0^{(Z)}>t],\quad t,z>0.
$$
(c) The sequence of distributions $\pr_z\left(Z\in\cdot|T_0^{(Z)}>1\right)$ on $D[0,1]$ converges weakly, as $z\to0$, towards a non-degenerate distribution $\mathbf{Q}$.
\end{theorem}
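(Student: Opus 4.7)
My strategy leans on an explicit Poisson point process (PPP) representation of $Z$ and on the $1$-self-similarity inherited from the transition kernel \eqref{eq:trans.prob}.

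\textbf{Part (a).}
I would realise $Z$ as
\[
Z_t=\max\Bigl\{(Z_0-t)^+,\ \sup_{(s,y)\in\Pi,\, s\le t}\bigl(y-(t-s)\bigr)^+\Bigr\},
\]
where $\Pi$ is a PPP on $[0,\infty)\times(0,\infty)$ with intensity $cy^{-2}\,ds\,dy$. A short Poisson computation gives $\pr_z(Z_t\le L)=(L/(L+t))^c$ for $L\ge(z-t)^+$, matching \eqref{eq:trans.prob}. Under this representation $\{T_0^{(Z)}>t\}$ is the event that $[z,t]$ is covered by the random intervals $\{[s,s+y):(s,y)\in\Pi\}$. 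Using scale invariance of $\Pi$ under $(s,y)\mapsto(\lambda s,\lambda y)$, it suffices to prove the formula for $z=1$; there I would condition on the leftmost (in time) atom whose reach $s+y$ exceeds $t$, extract its beta-distributed contribution, and iterate. For $c<1$ the resulting integral is finite, giving $\pr_z(T_0^{(Z)}<\infty)=1$ and hence recurrence; the critical case $c=1$ is handled separately by a Borel--Cantelli argument based on the logarithmic divergence of the coverage rate.

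\textbf{Part (b).}
The PPP construction yields the generator
\[
\mathcal L f(z)=-f'(z)+c\int_z^\infty\frac{f(y)-f(z)}{y^2}\,dy,
\]
and a short calculation gives $\mathcal L u\equiv 0$ for $u(z)=z^{1-c}$. Consequently $u(Z_{t\wedge T_0^{(Z)}})$ is a non-negative local martingale. To upgrade this to the claimed identity $\e_z[u(Z_t);T_0^{(Z)}>t]=u(z)$ I would verify uniform integrability by truncation, using the explicit density $cty^{c-1}(y+t)^{-c-1}$ of $Z_t$ on $(0,\infty)$ (valid once $t\ge z$) together with the tail bound from~(a).

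\textbf{Part (c).}
This is the main obstacle. I would perform a Doob $h$-transform with $h=u$, producing a self-similar Markov process $Z^u$ on $(0,\infty)$ with transition kernel $p^u(z,dy)=(u(y)/u(z))\,p^{\dagger}(z,dy)$, where $p^{\dagger}$ is the killed semigroup of $Z$. For any bounded continuous functional $F$ on $D[0,1]$ vanishing on trajectories that hit $0$ before time~$1$,
\[
\e_z\bigl[F(Z_{\cdot})\bigm|T_0^{(Z)}>1\bigr]
=\frac{u(z)}{\pr_z(T_0^{(Z)}>1)}\,\e^u_z\!\left[\frac{F(Z^u_{\cdot})}{u(Z^u_1)}\right].
\]
By the tail asymptotics from (a), the prefactor converges to $(1-c)B(c,1-c)$. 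It therefore suffices to show that $\pr^u_z$ admits a weak limit $\pr^u_{0+}$ on $D[0,1]$ as $z\to 0$, and that the family $\{u(Z^u_1)^{-1}\}_{z>0}$ is uniformly integrable under $\pr^u_z$. The former I would obtain from the self-similarity of $Z^u$ (inherited because $u$ is homogeneous), which rewrites $(Z^u_t)_{0\le t\le 1}^{(z)}$ as $(zZ^u_{t/z})_{0\le t\le 1}^{(1)}$, combined with tightness in $D[0,1]$ derived from the controlled PPP jump structure; the latter follows from a moment estimate, exploiting that $Z^u$ drifts to $+\infty$. The limit $\mathbf{Q}$ is then the image of $\pr^u_{0+}$ under the unbiasing by $u(\omega_1)^{-1}/\e^u_{0+}[u(\omega_1)^{-1}]$, which is nondegenerate since $u>0$ on $(0,\infty)$.
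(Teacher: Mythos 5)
Your plan shares the paper's architecture for part (c) (Doob $h$-transform with $u(z)=z^{1-c}$, then unbias by $1/u(Z_1)$), but takes different routes in (a) and (b). The paper proves (a) via the Lamperti transform (reducing to the Lévy process $\zeta_t-t$ with $\zeta$ compound Poisson with exponential jumps) together with an explicit ODE for $g(t,z)=\pr_z(T_0^{(Z)}>t)$, and proves (b) by a direct computation of $\e_x[Z_t^{1-c}]$ and of the contribution from $\{T_0^{(Z)}\le t\}$. Your PPP-covering argument for (a) is plausible in outline (that PPP representation of $Z$ is indeed the one of Buraczewski--Iksanov, and your computation recovers \eqref{eq:trans.prob}), but the ``condition on the leftmost atom whose reach exceeds $t$ and iterate'' step is vague: once you condition on that atom, the Poisson structure of the remaining points is constrained, so the renewal you want to set up is not immediate; the paper's ODE route sidesteps this. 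You also do not address the recurrence statement for $c\le 1$ separately from the finiteness of $T_0^{(Z)}$ for $c<1$; the paper settles recurrence by the one-line Green's-measure computation $\int_0^\infty\pr_x(Z_t\le y)\,dt=\infty$. Your martingale-plus-UI argument for (b) is a reasonable substitute for the paper's explicit calculation.

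Part (c) contains two genuine gaps. First, you assert that weak convergence of $\pr^u_z$ on $D[0,1]$ as $z\to0$ follows from the self-similarity identity $(Z^u_t)^{(z)}_{t\le1}\overset{d}{=}(zZ^u_{t/z})^{(1)}_{t\le1}$ together with tightness. This identity merely converts the $z\to0$ limit into a long-time scaling limit of $Z^u$ started at $1$; establishing that limit is exactly the content of Caballero--Chaumont's Theorem~2, which rests on fluctuation/overshoot theory for the Lamperti Lévy process, not on self-similarity plus tightness. The paper cites this result rather than re-deriving it. Second, your justification of uniform integrability of $u(Z^u_1)^{-1}$ ``exploiting that $Z^u$ drifts to $+\infty$'' is based on a false premise: the $h$-transformed Lamperti Lévy process $\widehat\zeta_t-t$ has zero mean (rate $c$, jumps with mean $1/c$) and oscillates, as the paper notes explicitly; consequently $Z^u$ has $\liminf Z^u_t=0$ and does not drift to $+\infty$. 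The paper instead obtains the needed control by a truncation at a small level $\varepsilon$ combined with the explicit one-dimensional limit $\lim_{x\to0}\pr_x(Z_1\le y\mid T_0^{(Z)}>1)=y/(1+y)$, whose density forces the contribution of $\{Z_1\le\varepsilon\}$ to vanish as $\varepsilon\to0$. You would need to supply an argument of this kind.
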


We now turn to the recurrence times of the chains $\{M_n\}_{n\ge 0}$ and $\{R_n\}_{n\ge 0}$. Since $R_n=\log_A M_n$,
$$
T^{(R)}_{x_0}=\inf\{n\ge 1: R_n\le x_0\}
=\inf\{n\ge 1: M_n\le A^{x_0}\}=T^{(M)}_{A^{x_0}}.
$$
Thus, it suffices to formulate the results for one of these processes.

Set 
$$
u_0(x):=\int_0^x\pr(\eta_1>y)dy,\quad x\ge0
$$
and 
\begin{equation}
\label{eq:U.def}
U_0(x):=\int_0^xe^{-u_0(y)}dy,\quad x\ge0.
\end{equation}
If \eqref{eq:log.tail.2} holds then $u_0(x)\sim c\log x$ as $x\to\infty$
and $e^{-u_0(x)}$ is regularly varying of index $-c$. Consequently, the function $U_0(x)$ is regularly varying of index $1-c$.
\begin{theorem}
\label{thm:Rn}
Assume that $x_0$ is such that $\pr(\eta_1\le x_0)\pr(\eta_1>x_0)>0$.
Then the equation 
$$
G(x)=\e_x[G(R_1);T^{(R)}_{x_0}>1],\quad x>x_0
$$
has a non-trivial solution if and only if $\e\eta_1^+=\infty$.
In the latter case 
$$
G(x)=C\left(1+\sum_{j=1}^\infty\prod_{k=0}^{j-1}\pr(\eta_1\le x_0+j)
{\rm 1}_{(x_0+j+1,\infty)}(x)\right)
$$
for every $C\in\mathbb{R}$.\\
If \eqref{eq:log.tail.2} holds with some $c\in(0,1)$ then
\begin{itemize}
 \item[(i)]  $G(x)\sim \gamma U_0(x)$ for some $\gamma\in(0,\infty)$;
 \item[(ii)] there exists a constant $C>0$ such that
 $$
\frac{1}{C}\frac{G(x\wedge n)}{G(n)}
\le\pr_x(T^{(R)}_{x_0}>n)
\le C\frac{G(x)}{G(n)},\quad n\ge1,\ x>x_0;
$$
\item[(iii)] there exists a positive constant $\varkappa=\varkappa(c)$ such that 
$$
\pr_x(T^{(R)}_{x_0}>n)\sim\varkappa\frac{G(x)}{G(n)},
\quad n\to\infty.
$$
and the sequence of conditional distributions
$\pr_x\left(\frac{R_{[nt]}}{n}\in\cdot\Big|T^{(R)}_{x_0}>n\right)$
on $D[0,1]$ converges weakly to $\mathbf{Q}$ defined in Theorem~\ref{thm:Z}.
\end{itemize}
\end{theorem}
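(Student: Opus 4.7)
The plan is to prove, in order: existence and the explicit formula for $G$; the asymptotics $G\sim\gamma U_0$ of~(i); the two-sided bounds of~(ii); and finally the sharp asymptotics and conditional weak convergence of~(iii).

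\textbf{Construction of $G$ and asymptotics $G\sim\gamma U_0$.} Since $R_1=\max(x-1,\eta_1)$ under $\pr_x$, for $x\in(x_0,x_0+1]$ survival forces $\eta_1>x_0$ and the harmonicity equation reduces to the constant $G(x)=\int_{x_0}^\infty G(y)\pr(\eta_1\in dy)$. For $x>x_0+1$ one obtains the renewal-type identity
$$
G(x)=G(x-1)\pr(\eta_1\le x-1)+\int_{x-1}^\infty G(y)\pr(\eta_1\in dy),
$$
which determines $G$ recursively on $(x_0+j,x_0+j+1]$, $j\ge 1$. Iterating and summing by parts, the resulting series converges (so that $G$ is finite and non-trivial) precisely when $\E\eta_1^+=\infty$, and the explicit expression in the theorem is a direct consequence of the recursion. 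For~(i), a short computation using $u_0'(y)=\pr(\eta_1>y)$ and integration by parts shows that $U_0$ is asymptotically harmonic,
$$
U_0(x)-U_0(x-1)\pr(\eta_1\le x-1)-\int_{x-1}^\infty U_0(y)\pr(\eta_1\in dy)=o(e^{-u_0(x)}),
$$
with $e^{-u_0}$ regularly varying of index $-c$ and $U_0$ of index $1-c$. A Tauberian comparison of $G$ to $U_0$ through the near-renewal equation then yields $G(x)/U_0(x)\to\gamma\in(0,\infty)$; this is the most delicate step, since standard Tauberian theorems must be adapted to the logarithmic-tail regime where $\pr(\eta_1>y)$ is regularly varying of index $-1$.

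\textbf{Two-sided bounds.} Extending $G$ by zero on $(-\infty,x_0]$, the process $\{G(R_{n\wedge T^{(R)}_{x_0}})\}$ is a non-negative martingale, yielding the key identity $G(x)=\E_x[G(R_n);T^{(R)}_{x_0}>n]$. For the upper bound in~(ii), one shows that on $\{T^{(R)}_{x_0}>n\}$ the position $R_n$ concentrates on scale~$n$, so $\{R_n>\varepsilon n\}$ carries most of the conditional mass; combined with $G(\varepsilon n)\gtrsim\varepsilon^{1-c}G(n)$ from regular variation, this gives $\pr_x(T^{(R)}_{x_0}>n)\le CG(x)/G(n)$. The required anti-concentration estimate, uniform in the starting point, is the second substantial technical difficulty of the proof. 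For the lower bound, the invariance principle~\eqref{eq:weak.limit.3} applied with $x_n\asymp n$ gives $\pr_{x_n}(T^{(R)}_{x_0}>n)\gtrsim(x_n/n)^{1-c}$; for smaller $x$ this is propagated back by a one-step restart using a suitably large value of $\eta_1$ together with the harmonicity of~$G$.

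\textbf{Sharp asymptotics and conditional limit.} The martingale identity rewrites as
$$
\pr_x(T^{(R)}_{x_0}>n)\cdot\E_x\big[G(R_n)/G(n)\,\big|\,T^{(R)}_{x_0}>n\big]=G(x)/G(n),
$$
so it suffices to prove that the conditional expectation has a positive finite limit $\varkappa^{-1}$. For starting points $x_n$ with $x_n/n\to z>0$, the invariance principle~\eqref{eq:weak.limit.3} together with continuity of the exit functional on $\pr_z$-typical paths (Theorem~\ref{thm:Z}(a)) implies that the conditional law of $R_n/n$ given survival converges to the law of $Z_1$ under $\pr_z(\,\cdot\,|\,T_0^{(Z)}>1)$, and the uniform integrability of $(R_n/n)^{1-c}$ follows from the bounds of the previous paragraph. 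Sending $z\to 0$ and invoking Theorem~\ref{thm:Z}(c) identifies $\varkappa$. For fixed $x$, the same asymptotic is obtained by splitting the chain at an intermediate time $m=o(n)$, applying the previous case to the post-$m$ chain, and absorbing the $\pr_x$-weight by harmonicity of $G$. The same splitting argument, applied to a bounded continuous functional on $D[0,1]$ together with Theorem~\ref{thm:Z}(c), yields the conditional convergence of $(R_{[nt]}/n)_{t\in[0,1]}$ to~$\mathbf{Q}$.
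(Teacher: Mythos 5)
Your outline tracks the overall structure the paper follows (recursion for $G$, regular variation of $G$, two-sided bounds via the martingale $G(R_{n\wedge T})$, then a martingale identity plus a conditional invariance principle for the sharp constant), but at two of the three technical load-bearing points the sketch either undershoots or leaves the key observation unstated.

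First, the lower bound in (ii). Jumping once to level $\asymp \varepsilon n$ and surviving from there gives
$\pr_x(T^{(R)}_{x_0}>n)\ge\pr(\eta_1>\varepsilon n)\,\pr_{\varepsilon n}(T^{(R)}_{x_0}>n)\gtrsim n^{-1}\varepsilon^{-c}$ for each fixed $\varepsilon$, which is $O(n^{-1})$. But the claimed lower bound is of order $G(x)/G(n)\asymp n^{c-1}$ (up to slowly varying factors), and for $c\in(0,1)$ one has $n^{c-1}\gg n^{-1}$. So a one-step restart is off by a power of $n$, and "propagated back by harmonicity" does not repair this: harmonicity gives $G(x)=\e_x[G(R_m);T>m]$ but turning that into a lower bound on $\pr_x(T>n)$ is exactly the thing to be proved. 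The paper's actual lower-bound argument (Section 4.1, done for $X$ and indicated to carry over to $R$) works under the Doob $h$-transform, analyses the overshoot at a stopping level $A^{2n}$, and then uses that the $h$-transformed chain cannot descend too fast; none of this is a one-step event. This is a genuine gap.

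Second, the upper bound in (ii). You reduce it to a conditional anti-concentration estimate ("$R_n$ concentrates on scale $n$ given survival"), flag it as a substantial difficulty, and leave it there. The step that makes this cheap in the paper is a monotone-coupling/positive-association observation: all $R_k$ are increasing functions of $(R_0,\eta_1,\dots,\eta_n)$, so $\pr_x(R_n>y\mid T^{(R)}_{x_0}>n)\ge\pr_x(R_n>y)$; combined with the unconditional invariance principle and Fatou this yields $\e_0[G(R_n);R_n>x_0]\gtrsim G(n)$ directly (Lemma~\ref{lem:V-monotone}). Without this observation your proposal circles: the conditional anti-concentration is essentially a corollary of the bound you are trying to prove. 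Finally, for (i) the paper takes a shorter route than the one you sketch: it reads off from the explicit formula, via the Karamata representation, that $\prod_{k<j}\pr(\eta_1\le x_0+k)\sim L(j)j^{-c}$ and hence $G$ is RV of index $1-c$; your "near-harmonicity of $U_0$ plus a Tauberian comparison" could in principle work but is considerably heavier than what is needed here, and as written it is not a proof.
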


We now state our main result for the chain $\{X_n\}_{n\ge 0}$.
\begin{theorem}
\label{thm:tail_of_T}
Assume that \eqref{eq:log.tail.2} holds with some $c\in(0,1)$.
(This is equivalent to \eqref{eq:log.tail} with $0<d<\log A$.)
For every $x_0$ satisfying $\pr(ax_0+\xi_1\le x_0)>0$ we have:
\begin{itemize}
 \item[(i)] There exists a strictly positive on $(x_0,\infty)$
function $V$ such that 
$$
V(x)=\e_x[V(X_1);T^{(X)}_{x_0}>1], \quad x>x_0.
$$
In other words, $V$ is harmonic for the chain $\{X_n\}$ killed at leaving $(x_0,\infty)$. Furthermore, $V(A^x)\sim U_0(x)$, where $U_0$ is defined in \eqref{eq:U.def}.
\item[(ii)] There exists a constant $C$ such that 
\begin{equation}
\label{eq:tail_bounds}
\frac{1}{C}\frac{V(x\wedge A^n)}{V(A^n)}\le\pr_x(T^{(X)}_{x_0}>n)
\le C\frac{V(x)}{V(A^n)}
\end{equation}
for all $n\ge1$ and all $x>x_0.$
\item[(iii)] There exists a positive constant
$\varkappa=\varkappa(c)$ such that, for every $x>x_0$,
\begin{equation}
\label{eq:tail_asymp}
\pr_x(T^{(X)}_{x_0}>n)\sim\varkappa\frac{V(x)}{V(A^n)},
\quad n\to\infty.
\end{equation}
Furthermore, the sequence of conditional distributions
$$
\pr_x\left(\frac{\log_A X_{[nt]}}{n}\in\cdot\Big|T^{(X)}_{x_0}>n\right)
$$
on $D[0,1]$ converges weakly to $\mathbf{Q}$ defined in Theorem~\ref{thm:Z}.
\end{itemize}
\end{theorem}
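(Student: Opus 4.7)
The strategy is to transfer Theorem~\ref{thm:Rn} from $\{R_n\}$ to $\{X_n\}$ via the pathwise comparison $M_k\le X_k\le(k+1)M_k$, valid when $X_0=M_0$, which on the logarithmic scale reads $R_k\le\log_A X_k\le R_k+\log_A(k+1)$. For $k\le n$ the gap is $o(n)$, so the scaling limits coincide, and whenever $R_k$ is of order $n$ the relative error between $\log_A X_k$ and $R_k$ is negligible. This should force the harmonic function and tail asymptotics for the two chains to agree up to the proportionality constant $\gamma$ from Theorem~\ref{thm:Rn}(i).

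For \textbf{part (i)} I would construct $V$ as a limit of Doob-type functionals built from the harmonic function $G$ of the $R$-chain. Setting
$$
V_n(x):=\e_x\bigl[G(\log_A X_n);\,T^{(X)}_{x_0}>n\bigr],
$$
the Markov property gives $V_n(x)=\e_x[V_{n-1}(X_1);T^{(X)}_{x_0}>1]$. Using the true harmonicity $\e_x[G(R_1);T^{(R)}_{x_0}>1]=G(x)$ together with the closeness of $\log_A X_1$ to $R_1$, one expects $(V_n(x))$ to be approximately monotone and uniformly bounded on compact sets; its limit $\widetilde V(x)\in(0,\infty)$ is then harmonic for the killed $X$-chain, provided the integrands are uniformly integrable (a fact bootstrapped from the upper bound of (ii) below). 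Setting $V:=\widetilde V/\gamma$ and invoking $G(y)\sim\gamma U_0(y)$, the relation $V(A^x)\sim U_0(x)$ reduces to showing that, starting from $x=A^y$ with $y$ large, $\log_A X_n$ remains close to $y$ on the time scales contributing to the limit. Strict positivity on $(x_0,\infty)$ follows from irreducibility together with the hypothesis $\pr(ax_0+\xi_1\le x_0)>0$.

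For \textbf{parts (ii) and (iii)}, the lower bound in \eqref{eq:tail_bounds} is immediate from $T^{(X)}_{x_0}\ge T^{(M)}_{x_0}$ (since $X_k\ge M_k$ when $X_0=M_0=x$), combined with Theorem~\ref{thm:Rn}(ii) and $V(A^x)\sim G(x)/\gamma$. For the upper bound, $X_k\le(k+1)M_k$ yields $\{T^{(X)}_{x_0}>n\}\subset\{T^{(M)}_{x_0/(n+1)}>n\}$, and one compares with the killing of $M$ at the slowly shrinking barrier $x_0/(n+1)$; regular variation of $G$ (hence $U_0$) absorbs the logarithmic correction into the multiplicative constant $C$. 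The sharp asymptotic \eqref{eq:tail_asymp} and the conditional limit theorem then follow from a Doob $h$-transform with $h=V$, following the proof of Theorem~\ref{thm:Rn}(iii) line by line: the only substitution needed is to replace the Skorokhod convergence of $R_{[nt]}/n$ to $Z$ by that of $\log_A X_{[nt]}/n$, which differs from $R_{[nt]}/n$ by at most $\log_A(n+1)/n\to 0$ uniformly on compacts and hence has the same weak limit $\mathbf{Q}$.

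\textbf{The main obstacle} is proving the precise asymptotic $V(A^x)\sim U_0(x)$ in (i). The bound $|\log_A X_k-R_k|\le\log_A(k+1)$ is multiplicatively small only when $R_k$ is much larger than $\log k$; to convert this into a pointwise asymptotic for $V$, one must rule out that $\{X_n\}$ spends an atypical amount of time near the boundary $x_0$ before escaping to high levels. I expect this to require a cut-off at intermediate scales, uniform tail estimates from (ii), and careful regular-variation manipulations relating $G\circ\log_A$ to $U_0\circ\log_A$ under the dynamics of $\{X_n\}$. Matching the multiplicative constant exactly to $\gamma$, rather than merely up to bounded ratios, is anticipated to be the most delicate piece of the argument.
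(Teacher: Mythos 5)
Your overall strategy---transfer Theorem~\ref{thm:Rn} from $R$ to $X$ via the sandwich $R_k\le\log_A X_k\le R_k+\log_A(k+1)$---is a natural first idea, but it runs in the opposite logical direction from the paper (which proves the $X$-case directly and then notes the $M$/$R$ cases are easier), and two of its steps have genuine gaps.

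The first gap is in \textbf{(i)}. You define $V_n(x)=\e_x[G(\log_A X_n);T^{(X)}_{x_0}>n]$ and assert this is ``approximately monotone and uniformly bounded.'' It is not: $G$ is exactly harmonic for $R$ but only \emph{sub}harmonic for $\log_A X$, since $\log_A(A^{z-1}+A^{\eta_1})\ge\max\{z-1,\eta_1\}$ pointwise. Thus $\e_x[G(\log_A X_1);T^{(X)}_{x_0}>1]\ge G(\log_A x)$, and one cannot conclude that $V_n$ is bounded without controlling the per-step excess, which is of order $G(z)/z^2$. The paper's mechanism for controlling the accumulated excess is the $\varepsilon$-perturbation: Lemma~\ref{lem:L2} shows $\e[U_\varepsilon(\log_A(A^{z-1}+A^{\eta_1}))]-U_\varepsilon(z)$ has a strictly negative leading term of order $e^{-u_\varepsilon(z)}$ (regularly varying of index $-(1+\varepsilon)c$) which dominates the $O(U_0(z)/z^2)$ excess from Lemma~\ref{lem:mean_drift} when $\varepsilon<(1-c)/c$, turning $U_0(\log_A X_{n\wedge T})+U_\varepsilon(\log_A X_{n\wedge T})$ into a genuine supermartingale. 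Without this trick, or something equivalent, the limit defining $V$ may not exist or may be infinite. Your proposal correctly flags $V(A^x)\sim U_0(x)$ as the delicate point but offers no mechanism to close the gap; in the paper this comes essentially for free once the supermartingale/submartingale sandwich is in place.

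The second gap is the \textbf{upper bound in (ii)}. The inclusion $\{T^{(X)}_{x_0}>n\}\subset\{M_k>x_0/(n+1)\ \forall k\le n\}$ is correct, but it replaces a fixed barrier with one that recedes to $-\infty$ (in $R$-coordinates) as $n\to\infty$. Theorem~\ref{thm:Rn}(ii) is stated for a fixed barrier; its constant $C$ and harmonic function $G$ depend on that barrier, and nothing in the theorem gives uniformity as the barrier drops by $\log_A(n+1)$. Regular variation of $G$ is in the level variable, not in the barrier variable, so it does not ``absorb the logarithmic correction'' without a separate argument comparing $G_{b}$ across barriers $b$ and showing the constant stays bounded. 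The paper avoids this entirely: the upper bound follows from the monotonicity of $V$ plus Fatou's lemma applied to $V(X_n)/V(A^n)\to Z^{1-c}$ (Lemma~\ref{lem:V-monotone}), and the extension to small $x_0$ uses a subexponential convolution bound (Lemma~\ref{lem:down}, via Proposition~4 of~\cite{AFK03}), not a barrier-comparison.

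For \textbf{(iii)}, you propose to follow ``the proof of Theorem~\ref{thm:Rn}(iii) line by line,'' but in the paper that theorem is proved by reduction to the $X$-chain arguments (``Exact asymptotics in Theorem~\ref{thm:Rn} can be derived by exactly the same arguments, and we omit their proof''), so the dependency is reversed. The actual proof for $X$ verifies Durrett's conditions directly: weak convergence of $\log_A X_{[n\cdot]}/n$ to $Z$ (which your sandwich argument does give), plus the tail bounds from (ii), a boundary-repulsion estimate (Lemma~\ref{lem: repulsion}), a moment estimate on $X_n$ conditioned to survive (Lemma~\ref{lem:cond_dist}), and tightness (Lemma~\ref{lem:tightness}); then extracts $\varkappa$ from harmonicity of $V$ and the conditional limit via a truncation in the $V(X_n)$ average. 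Only the weak-convergence ingredient transfers cleanly through the sandwich; the rest must be carried out for $X$ itself.

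In short, the sandwich gives the lower bound in (ii) and the functional scaling limit, but the harmonic-function construction, the upper tail bound, and the exact constant in (iii) all require the direct analysis of $X$ that the paper carries out, and the proposal as written does not supply substitutes for those.
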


We now turn to the positive recurrent case: $\e[\eta_1]<\infty$. To determine the tail behaviour of recurrence times we shall assume that 
$\overline F(y):=\pr(\eta_1>y) $ is subexponential. 
We make use of the following class introduced in~\cite{Kl88}.
\begin{definition}\label{sstar}
    A distribution function $F$ with  finite $\mu_+=\int_0^\infty \overline F(y)dy<\infty$ belongs to the class $\mathcal S^*$ of strong subexponential distributions  if 
 $\overline F(x)>0$ 
for all $x$ and 
\[
\frac{\int_0^x \overline F(x-y)\overline F(y)dy}{\overline F(x)}
\to 2 \mu_+,\quad \mbox{ as } x\to \infty. 
\]
\end{definition}
This class is a proper subclass of class $\mathcal S$ of subexponential distributions. It is shown 
in~\cite{Kl88} that the Pareto, lognormal and Weibull distributions belong to the class $\mathcal S^*$. An example of a subexponential distribution with finite mean which does not belong to $\mathcal S^*$ can be found in~\cite{DFK04}. 

\begin{theorem}\label{thm:subex_for_R}
Assume that $x_0$ is such that $\pr(\eta_1\le x_0)\pr(\eta_1>x_0)>0$. 
Assume also that $\e\eta_1<\infty$ and that $F\in \mathcal S^*$. Then, for any $x>x_0$
\begin{equation} 
\pr_x(T^{(R)}_{x_0}>n)  \sim \e_x[T^{(R)}_{x_0}]\pr(\eta_1>n). 
\end{equation}
The expectation $\e_x[T^{(R)}_{x_0}]$ can be computed explicitly:
for every $n\ge0$ and every $x\in(x_0+n,x_0+n+1]$ one has
$$
\e_x[T^{(R)}_{x_0}]=
\frac{1}{\prod_{k=0}^{\infty}\pr(\eta_1\le x_0+k)}
\left(1+\sum_{j=1}^{n}\prod_{k=0}^{j-1}\pr(\eta_1\le x_0+k)\right).
$$
\end{theorem}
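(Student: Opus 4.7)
The proof splits into two parts: an explicit mean formula obtained via one-step analysis, and a tail asymptotic extracted from a defective renewal equation using subexponential theory.

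\emph{Mean formula (Part~A).} Because $R_n=\max\{R_{n-1}-1,\eta_n\}$, a starting point in the shell $(x_0+n,x_0+n+1]$ with $n\ge 1$ admits a one-step dichotomy: $\eta_1\le x_0+n$ sends $R_1$ into the shell $(x_0+n-1,x_0+n]$, while $\eta_1>x_0+n$ sets $R_1=\eta_1$ in a strictly higher shell. A self-consistency argument shows that $\e_x[T^{(R)}_{x_0}]$ depends on $x$ only through the shell index; writing $e_n$ for the common value on shell $n$ and $p_k:=\pr(\eta_1\le x_0+k)$, the resulting one-step system yields, by differencing, $e_n-e_{n-1}=p_{n-1}(e_{n-1}-e_{n-2})$ for $n\ge 2$ together with $e_1-e_0=p_0e_0$. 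Integration gives $e_n=e_0\bigl(1+\sum_{j=1}^n\prod_{k=0}^{j-1}p_k\bigr)$, and substituting back into the $n=0$ equation, combined with the telescoping identity $p_0=\prod_{k\ge 0}p_k+\sum_{k\ge 1}(1-p_k)\prod_{l=0}^{k-1}p_l$ (valid because $\sum(1-p_k)<\infty$ thanks to $\e\eta_1<\infty$), pins down $e_0=1/\prod_{k\ge 0}p_k$, giving the stated formula.

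\emph{Renewal equation (Part~B, setup).} Using $R_n=\max\{x-n,M_n\}$ with $M_n:=\max_{1\le j\le n}(\eta_j-(n-j))$, for $n>n_0$ (where $n_0$ is the shell index of $x$) one has $x-n\le x_0$ and therefore $\{R_n\le x_0\}=\{M_n\le x_0\}$. Introduce the ``last useful innovation''
\[
\tau_n:=\max\{1\le j\le n:\eta_j>x_0+n-j\},
\]
set to $0$ if the set is empty. The event $\{\tau_n=0\}$ is incompatible with $\{T^{(R)}_{x_0}>n\}$, while on $\{\tau_n=j\}$ for $j\ge 1$ the inequality $\eta_j>x_0+n-j$ automatically delivers $R_k\ge \eta_j-(k-j)>x_0$ throughout $[j,n]$. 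Since $\{\tau_n=j\}$ depends only on $\eta_j,\dots,\eta_n$ while $\{T^{(R)}_{x_0}>j-1\}$ depends only on $\eta_1,\dots,\eta_{j-1}$, independence yields the defective renewal equation
\[
u_n:=\pr_x(T^{(R)}_{x_0}>n)=\sum_{m=0}^{n-1}u_{n-m-1}\,w_m,\qquad w_m:=\overline F(x_0+m)\prod_{k=0}^{m-1}p_k,
\]
valid for $n>n_0$, with boundary values $u_j=1$ for $j\le n_0$ and total mass $\sum_m w_m=1-P$ by telescoping, where $P:=\prod_{k\ge 0}p_k>0$.

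\emph{Tail asymptotic (Part~B, subexponential analysis).} The kernel satisfies $w_m\sim P\,\overline F(m)$ by long-tailedness of $F$, and its integrated tail is asymptotic to $P\int_m^\infty\overline F(x_0+s)\,ds$, which is subexponential precisely by the hypothesis $F\in\mathcal S^*$. In the shell-$0$ case, expanding $U^{(0)}(z)=1/(1-zW(z))$ as a geometric series in $zW(z)$ gives $u^{(0)}_n=\sum_{j\ge 1}w^{*j}_{n-j}$, and a standard subexponential renewal theorem, justified by Kesten-type uniform bounds on $w^{*j}$, yields $u^{(0)}_n\sim w_n/P^2\sim\overline F(n)/P=e_0\overline F(n)$. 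For a general shell $n_0\ge 1$, the boundary conditions $u_j=1$ for $j\le n_0$ force the generating function to factor as $U^{(n_0)}(z)=N_{n_0}(z)\,U^{(0)}(z)$ for an explicit polynomial $N_{n_0}$ of degree $n_0$, so $u^{(n_0)}_n\sim N_{n_0}(1)\,u^{(0)}_n$ by long-tailedness; a direct evaluation matched against the Part~A formula identifies $N_{n_0}(1)=P\,e_{n_0}$, completing $u^{(n_0)}_n\sim e_{n_0}\,\overline F(n)$. The main obstacle is the subexponential renewal asymptotic $u^{(0)}_n\sim w_n/P^2$ itself: the convolutions $w^{*j}_{n-j}$ are point probabilities rather than tails, so the classical subexponential convolution theorems (formulated for tails) do not apply verbatim, and the uniform-in-$j$ control needed to interchange limit and sum is precisely what the strengthened class $\mathcal S^*$ (rather than $\mathcal S$) is designed to deliver.
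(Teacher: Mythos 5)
Your Part A (the explicit formula for $\e_x[T^{(R)}_{x_0}]$ via one-step analysis and shell constancy) matches the paper's argument.

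Your Part B, however, is a genuinely different and arguably cleaner route to the tail asymptotic. The paper (Proposition~\ref{prop:tail.recursion}) derives the more awkward recursion
\[
v_n=\pr(\eta_1>x_0+n-1)+v_{n-1}\pr(\eta_1\in(x_0,x_0+n-1])
+\sum_{m=1}^{n-2}v_{n-m-1}\pr(\eta_1\in(x_0+m,x_0+n-1])\prod_{j=0}^{m-1}p_j,
\]
with an $n$-dependent forcing term, and then has to work in two stages: first an upper bound $v_n\le Cw_n$ where $(w_n)$ does satisfy a clean defective renewal equation (to which the locally-subexponential results of Asmussen--Foss--Korshunov can be applied), and then a separate lower bound via a single-big-jump inclusion--exclusion on the events $\{\eta_k>x_0+n\}$, finishing with a truncation argument to close the gap between upper and lower estimates. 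Your ``last useful innovation'' decomposition via $\tau_n:=\max\{1\le j\le n:\eta_j>x_0+n-j\}$ produces an \emph{exact} defective renewal equation $u_n=\sum_{m=0}^{n-1}u_{n-m-1}w_m$ with no forcing term, so the asymptotic comes in one shot from the renewal theorem rather than from squeezing. I verified the decomposition: $\{\tau_n=0\}$ forces $R_n\le x_0$ once $x-n\le x_0$, while on $\{\tau_n=j\}$ the lone large $\eta_j$ propagates $R_k>x_0$ throughout $[j,n]$, and the independence of $\{\tau_n=j\}$ from $\{T^{(R)}_{x_0}>j-1\}$ is exactly as you state; I also checked that your generating-function factorisation $U^{(n_0)}(z)=N_{n_0}(z)U^{(0)}(z)$ is exact (the boundary terms from $l<n_0$ are precisely $z^l(1-S_{l-1})=z^l\prod_{j<l}p_j$ by the telescoping you use for $\sum w_m$). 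The technical obstacle you flag at the end --- that one needs the \emph{local} form of the subexponential renewal asymptotic for coefficient sequences, not the tail form --- is real but is exactly what the cited Asmussen--Foss--Korshunov machinery (which the paper also invokes) supplies, so the gap is one of citation, not of substance. What you gain is a unified renewal-theoretic proof; what the paper's two-sided bound approach buys is that it extends to the autoregressive chain $\{X_n\}$ (Theorem~\ref{thm:subex_for_X}), where no exact renewal equation is available and one must resort to estimates.
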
   
Our approach to the proof of this theorem is based on a recursive equation for the tail of $T^{(R)}_{x_0}$, see Proposition~\ref{prop:tail.recursion} below. 
In the case of the chain $\{X_n\}_{n\ge 0}$ we do not have such an equation and we have to work with upper and lower estimates. This leads to more restrictive assumptions on the tail of innovations $\eta_k$. 

\begin{theorem}\label{thm:subex_for_X}
Assume that $x_0$ is such that $\pr(ax_0+\xi_1\le x_0)>0$. 
Assume also that $\e\eta<\infty$,that $F\in \mathcal S^*$ and that 
\begin{equation}
\label{eq:log-insens}
\pr(\eta>x)\sim\pr(\eta>x-\log x),\quad\text{as }x\to\infty.
\end{equation}
Then, for any $x>x_0$,
\begin{equation} 
\pr_x(T^{(X)}_{x_0}>n)  \sim \e_x[T^{(X)}_{x_0}]\pr(\eta>n). 
\end{equation}
\end{theorem}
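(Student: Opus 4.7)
The plan is to prove matching upper and lower asymptotics for $\pr_x(T^{(X)}_{x_0}>n)$, both reflecting the one-big-jump heuristic: the event $\{T^{(X)}_{x_0}>n\}$ is typically realised through a single exceptionally large innovation $\xi_k$ whose contribution through the AR(1)-recursion keeps $\{X_j\}$ above $x_0$ for the remaining $n-k$ steps. Write $\tilde x_0:=\log_A x_0$. Since $\E\eta_1<\infty$, $\{X_n\}$ is positive recurrent, so $\sum_{k\ge 0}\pr_x(T^{(X)}_{x_0}>k)=\E_x[T^{(X)}_{x_0}]<\infty$; this summability together with the long-tailed property of $\overline F=\pr(\eta_1>\cdot)$ inherited from $\overline F\in\mathcal S^*$ is what makes the convolution asymptotics close.

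For the lower bound I consider, for $k=1,\ldots,n$, the events $A_k:=\{T^{(X)}_{x_0}>k-1\}\cap\{\xi_k>x_0A^{n-k+1}\}$. The deterministic inequality $X_{k+j}\ge a^j\xi_k>x_0A^{n-k+1-j}>x_0$ valid on $A_k$ for $0\le j\le n-k$ shows $A_k\subseteq\{T^{(X)}_{x_0}>n\}$; independence of $\xi_k$ from $\sigma(\xi_1,\ldots,\xi_{k-1})$ gives $\pr_x(A_k)=\pr_x(T^{(X)}_{x_0}>k-1)\pr(\eta_1>\tilde x_0+n-k+1)$. A careful inclusion-exclusion analysis of $\bigcup_k A_k$, using that $A_k\cap A_l$ factorises via independence and that the resulting multiple-jump corrections are of smaller order than the main term by the $\mathcal S^*$ property, combined with the long-tailed convolution asymptotic for summable sequences against long-tailed tails, yields $\pr_x(T^{(X)}_{x_0}>n)\ge(1+o(1))\E_x[T^{(X)}_{x_0}]\pr(\eta_1>n)$.

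For the upper bound, fix $h\in(0,(1-a)x_0)$ with $\pr(\xi_1\le h)>0$, available from the standing assumption $\pr(ax_0+\xi_1\le x_0)>0$. Choose truncation levels $b_k:=x_0A^{n-k+1}/n^2$; on $\{\xi_j\le b_j\ \forall j\le n\}$ the iteration gives $X_n\le a^n x+x_0A/n<x_0$ for $n$ large, so this event is incompatible with $\{T^{(X)}_{x_0}>n\}$. Hence
\[
\{T^{(X)}_{x_0}>n\}\subseteq\bigcup_{k=1}^n\{T^{(X)}_{x_0}>k-1\}\cap\{\xi_k>b_k\},
\]
and the union bound together with the independence of $\xi_k$ from $\sigma(\xi_1,\ldots,\xi_{k-1})$ gives $\pr_x(T^{(X)}_{x_0}>n)\le\sum_{k=1}^n\pr_x(T^{(X)}_{x_0}>k-1)\pr(\xi_1>b_k)$. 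The key step is then to replace $\pr(\xi_1>b_k)=\pr(\eta_1>\tilde x_0+n-k+1-2\log_A n)$ by $\pr(\eta_1>n-k)$ up to a factor $1+o(1)$: this is exactly what the log-insensitivity condition \eqref{eq:log-insens} delivers, since the shift is $O(\log n)$. Combined with the long-tailed convolution asymptotic, this produces the matching upper bound $\pr_x(T^{(X)}_{x_0}>n)\le(1+o(1))\E_x[T^{(X)}_{x_0}]\pr(\eta_1>n)$.

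The chief difficulty is the upper bound: in order to make the complementary "no big jump" event vacuous one is forced to take $b_k$ smaller than the threshold $x_0A^{n-k+1}$ used in the lower bound, by a factor of $n^2$; this corresponds to an $O(\log n)$ shift in the $\eta_1$-argument. Without the log-insensitivity assumption \eqref{eq:log-insens}, $\pr(\eta_1>y-O(\log y))$ would not be comparable to $\pr(\eta_1>y)$, and the upper and lower asymptotic constants would fail to match, which is precisely why this additional hypothesis is imposed in the theorem.
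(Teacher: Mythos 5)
The lower bound is fine and essentially matches the paper's. The upper bound, however, has two genuine gaps.

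\textbf{First gap: the truncation level.} You deflate by $n^{-2}$ uniformly in $k$, i.e., $b_k=x_0A^{n-k+1}/n^2$, so that
\[
\pr(\xi_1>b_k)=\pr\bigl(\eta_1>n-k+1+\log_Ax_0-2\log_An\bigr).
\]
The problem is that this sum over $k$ is not bounded in $n$: for the roughly $2\log_A n$ indices $k$ with $n-k+1\le 2\log_An-\log_Ax_0$ the probability equals $1$, and the overall sum $\sum_{k=1}^n\pr(\xi_1>b_k)$ grows like $2\log_A n+\e[\eta_1^+]$. Feeding this into the union bound and using the crude estimate $\pr_x(T^{(X)}_{x_0}>k-1)\le C\pr(\eta_1>k-1)$, the terms with $k$ close to $n$ alone contribute about $C\log n\cdot\pr(\eta_1>n-O(\log n))\sim C\log n\cdot\pr(\eta_1>n)$. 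So your union bound only yields $\pr_x(T^{(X)}_{x_0}>n)=O(\log n\cdot\pr(\eta_1>n))$, which is strictly too weak. Log-insensitivity controls a shift of $O(\log n)$ \emph{inside} the tail function, but it does not kill this extra multiplicative $\log n$. The paper avoids this by deflating with $(n-k+1)^{-2}$ rather than $n^{-2}$: setting $c_j(y)=\pr\bigl(\xi_1>cyA^j/(j+1)^2\bigr)$, the sum $C(y):=\sum_{j\ge0}c_j(y)$ is finite because $\e\eta_1<\infty$, is independent of $n$, and tends to $0$ as $y\to\infty$. This is the structural point that makes the recursion close. (You also cannot first obtain the crude bound $\pr_x(T^{(X)}_{x_0}>n)\le C\pr(\eta_1>n)$ from your own recursion, for the same $\log n$ reason; the paper obtains it from the $(n-k+1)^{-2}$ version with $y=x_0$.)

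\textbf{Second gap: removing the extra constant.} Even once the truncation is corrected, the subexponential-convolution bound does not directly give the sharp constant. With $a_k=\pr_x(T^{(X)}_{x_0}>k)$ (so $\sum_ka_k=\e_x[T^{(X)}_{x_0}]$, tail $\asymp\pr(\eta_1>k)$) and $b_j=c_j(y)$ (so $\sum_jb_j=C(y)$, $b_j\sim\pr(\eta_1>j)$ by \eqref{eq:log-insens}), the convolution asymptotic yields
\[
\limsup_{n\to\infty}\frac{\pr_x(X_n>y,\,T^{(X)}_{x_0}>n)}{\pr(\eta_1>n)}\le\e_x[T^{(X)}_{x_0}]+C(y),
\]
with a nonzero extra term $C(y)$. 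Moreover, once the deflation is $(n-k+1)^{-2}$, the intersection of $\{\xi_k\le b_k\}$ forces $X_n\le y$ but not $X_n\le x_0$; so the union-bound statement is about $\{X_n>y,\,T^{(X)}_{x_0}>n\}$ only, and one cannot simply set $y=x_0$ without bringing back a non-negligible $C(x_0)$. The paper resolves this by a two-step decomposition: choose $N_n\to\infty$ with $\pr(\eta_1>n)\sim\pr(\eta_1>n-N_n)$, split
\[
\pr_x(T^{(X)}_{x_0}>n)\le\pr_x(X_{n-N_n}>y,\,T^{(X)}_{x_0}>n-N_n)+\pr_x(T^{(X)}_{x_0}>n-N_n)\,\pr_y(T^{(X)}_{x_0}>N_n),
\]
observe that $\pr_y(T^{(X)}_{x_0}>N_n)\to0$ by positive recurrence, and finally let $y\to\infty$ so that $C(y)\to0$. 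Your proposal has no analogue of this splitting step, so even after repairing the truncation the upper bound would only give $\e_x[T^{(X)}_{x_0}]+C(y)$ for a fixed (and possibly large) $y$, not the desired constant $\e_x[T^{(X)}_{x_0}]$.

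In short: the one-big-jump heuristic and the role of the log-insensitivity hypothesis are correctly identified, but the truncation scheme must depend on $n-k$ rather than on $n$, and the final passage to $\e_x[T^{(X)}_{x_0}]$ needs the separate splitting argument at time $n-N_n$ together with letting the auxiliary level $y\to\infty$.
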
    
The rest of the paper is organised as follows. 
In Section~\ref{sec:Z} 
we discuss properties of $Z_t$ and prove Theorem~\ref{thm:Z}. 
In Section~\ref{sec:harmonic} we construct 
harmonic functions for processes under consideration 
proving corresponding parts of Theorem~\ref{thm:Rn} and Theorem~\ref{thm:tail_of_T}. 
In Section~\ref{sec:l.u.bounds} we 
derive lower and upper bounds for recurrence times Theorem~\ref{thm:Rn} and Theorem~\ref{thm:tail_of_T}. 
proving part (ii) of  Theorem~\ref{thm:Rn} and Theorem~\ref{thm:tail_of_T}. 
In Section~\ref{sec:asym} we obtain 
the asymptotics for tails of recurrence times given  
in  part (iii) of  Theorem~\ref{thm:Rn} and Theorem~\ref{thm:tail_of_T}.  
In Section~\ref{sec:subex_for_R} we prove 
Theorem~\ref{thm:subex_for_R} and 
in Section~\ref{sec:subex_for_X} we prove 
Theorem~\ref{thm:subex_for_X}.

\section{Properties of the limiting process $Z$: proof of Theorem~\ref{thm:Z}.}
\label{sec:Z}
It follows from \eqref{eq:trans.prob} that if $t<x$ then 
\begin{equation}
\label{eq:trans.prob1}
\pr_x(Z_t=x-t)=\left(\frac{x-t}{x}\right)^c
\ \text{and}\  
\frac{\pr_x(Z_t\in dy)}{dy}
=\frac{ct y^{c-1}}{(t+y)^{c+1}},\ y>x-t.
\end{equation}
If $t\ge x$ then 
\begin{equation}
\label{eq:trans.prob2}
\frac{\pr_x(Z_t\in dy)}{dy}
=\frac{ct y^{c-1}}{(t+y)^{c+1}},\ y>0.
\end{equation}
It is immediate from \eqref{eq:trans.prob} that if $c\le1$ then 
$$
\int_0^\infty\pr_x(Z_t\le y)dt=\infty
$$
for all $x,y>0$.  Therefore, the process $Z$ is recurrent: it spends infinite amount of time in every interval $[0,y]$. 

We next show that the state $0$ is recurrent in the case $c<1$. More precisely, we show that $\pr_z(\tau_0^{(Z)}<\infty)=1$ for every $z>0.$
For that reason we compute first the generator of $Z$. 
Fix some $x>0$ and a continuously differentiable 
bounded  function $f$. It follows then from 
\eqref{eq:trans.prob1} that 
$$
\e_x[f(Z_t)]
=f(x-t)\left(\frac{x-t}{x}\right)^c
+ct\int_{x-t}^\infty\frac{y^{c-1}}{(y+t)^{c+1}}f(y)dy,
\quad t<x.
$$
Therefore,
$$
\frac{\e_x[f(Z_t)]-f(x)}{t}
=\frac{f(x-t)-f(x)}{t}
+f(x-t)\frac{\left(\frac{x-t}{x}\right)^c-1}{t}
+c\int_{x-t}^\infty\frac{y^{c-1}}{(y+t)^{c+1}}f(y)dy.
$$
Letting now $t\to0$, we conclude that 
\begin{align}
\label{eq:gen1}
\nonumber
\mathcal{L}f(x)
&=-f'(x)-c\frac{f(x)}{x}+c\int_x^\infty\frac{f(y)}{y^2}dy\\
&=-f'(x)+c\int_x^\infty\frac{f(y)-f(x)}{y^2}dy,
\quad x>0.
\end{align}
It is easy to see that this generator can be represented as follows 
\begin{align*}
&\mathcal{L}f(x)\\
&\hspace{2mm}=-\left(1-c\int_1^\infty\frac{\log u}{1+\log^2u}\frac{du}{u^2}\right)f'(x)\\
&\hspace{2cm}+\frac{c}{x}\int_1^\infty\left(f(ux)-f(x)-
\frac{\log u}{1+\log^2u}xf'(x)\right)\frac{du}{u^2}\\
&\hspace{2mm}=-\left(1-c\int_1^\infty\frac{\log u}{1+\log^2u}\frac{du}{u^2}\right)f'(x)+\frac{1}{x}\int_1^\infty h^*(x,u)\frac{c\log^2u}{u^2(1+\log^2u)}du,
\end{align*}
where 
$$
h^*(x,u)=\left(f(ux)-f(x)-\frac{\log u}{1+\log^2u}xf'(x)\right)
\frac{1+\log^2u}{\log^2u}.
$$
Then, according to Theorem 6.1 in Lamperti~\cite{Lamp72},
$\{Z_t,t<\tau^{(Z)}_0\}$ can be represented as the exponential functional of a time-changed L\'evy process with the following L\'evy-Khintchine exponent:
$$
\Psi(\lambda)=-i\lambda
\left(1-c\int_1^\infty\frac{\log u}{1+\log^2u}\frac{du}{u^2}\right)
+\int_0^\infty\left(e^{i\lambda y}-1-\frac{i\lambda y}{1+y^2}\right)ce^{-y}dy.
$$
Simplifying this expression, we get 
$$
\Psi(\lambda)=-i\lambda+\int_0^\infty(e^{i\lambda y}-1)e^{-y}cdy.
$$
This corresponds to the process $\zeta_t-t$, where $(\zeta_t)_{t\ge 0}$ is a compound Poisson process with intensity $c$ and with exponentially distributed jumps. 
In particular, $\zeta_t-t\to-\infty$ a.s. as $t\to\infty$ in the case $c<1$ and $\zeta_t-t$ is oscillating in the case $c=1$. Then $T^{(Z)}_0$ is finite almost surely iff $c<1$.

To prove \eqref{eq:tau_tail} we define 
$$
g(t,z):=\pr_z(T_0^{(Z)}>t).
$$
It is clear that $g(t,z)=1$ for all $t\le z$. Using \eqref{eq:trans.prob2}, we see that $g$ solves the equation 
$$
g(t,z)=g(t-s,z-s)\left(\frac{z-s}{z}\right)^c
+cs\int_{z-s}^\infty\frac{y^{c-1}}{(y+s)^{c+1}}g(t-s,y)dy,\quad s<z.
$$
Letting $s\to0$ we obtain the following decomposition for the expression on the right hand side:
$$
g(t-s,z-s)\left(1-\frac{cs}{z}\right)+
cs\int_{z}^\infty\frac{g(t,y)}{y^{2}}dy+o(s).
$$
Therefore,
$$
\lim_{s\to0}\frac{g(t,z)-g(t-s,z-s)}{s}
=-\frac{c}{z}g(t,z)+
c\int_{z}^\infty\frac{g(t,y)}{y^{2}}dy.
$$
As a result we have the following differential equation
\begin{equation}
\label{eq:diff.eq}
\frac{\partial}{\partial t}g(t,z)
+\frac{\partial}{\partial z}g(t,z)
=-\frac{c}{z}g(t,z)+
c\int_{z}^\infty\frac{g(t,y)}{y^{2}}dy,
\quad t>z.
\end{equation}
Since the process $Z$ is self-similar with index $1$,
$$
g(t,z)=\pr(T_0^{(Z)}>t|Z_0=z)=\pr(T_0^{(Z)}>t/z|Z_0=1)
=g\left(\frac{t}{z},1\right)=:h\left(\frac{t}{z}\right).
$$
It follows then from \eqref{eq:diff.eq} that the function $h$
satisfies 
$$
\frac{1}{z}h'\left(\frac{t}{z}\right)
-\frac{t}{z^2}h'\left(\frac{t}{z}\right)
=-\frac{c}{z}h\left(\frac{t}{z}\right)
+c\int_{z}^\infty\frac{h(t/y)}{y^{2}}dy,\  t>z.
$$
Noting that $h(r)=1$ for all $r\le 1$ and substituting $t/y=x$, we get
\begin{align*}
\int_{z}^\infty\frac{h(t/y)}{y^{2}}dy
&=\int_{z}^t\frac{h(t/y)}{y^{2}}dy+\frac{1}{t}\\
&=\frac{1}{t}\int_1^{t/z}h(x)dx+\frac{1}{t}.
\end{align*}
Therefore,
$$
(1-y)h'(y)=-ch(y)+\frac{c}{y}\left(1+\int_1^y h(x)dx\right),
\quad y>1.
$$
Differentiating this equation, we get 
\begin{align*}
(1-y)h''(y)-h'(y)
&=-ch'(y)+\frac{c}{y}h(y)
-\frac{c}{y^2}\left(1+\int_1^y h(x)dx\right)\\
&=-ch'(y)+\frac{c}{y}h(y)
-\frac{1}{y}((1-y)h'(y)+ch(y)).
\end{align*}
Rearranging the terms, we arrive at the equation 
$$
(1-y)h''(y)=\left(1-c-\frac{1-y}{y}\right)h'(y).
$$
This is equivalent to 
$$
(\log h'(y))'=\frac{h''(y)}{h'(y)}=\frac{c-1}{y-1}-\frac{1}{y}.
$$
Consequently,
$$
h'(y)=C(y-1)^{c-1}y^{-1}
\quad\text{and}\quad
h(y)=C\int_x^\infty (y-1)^{c-1}y^{-1}dy.
$$
The boundary condition $h(1)=1$ leads to the equality 
$$
h(x)=\frac{\int_x^\infty (y-1)^{c-1}y^{-1}dy}
{\int_1^\infty (y-1)^{c-1}y^{-1}dy},\quad x\ge1.
$$
Substituting in these integrals $y=1/u$, we finally get 
$$
h(x)=\frac{1}{B(c,1-c)}\int_0^{1/x}(1-z)^{c-1}z^{-c}dz,
\quad x\ge1.
$$
As a result we have \eqref{eq:tau_tail}.
This formula can be also obtained via the Lamperti transformation mentioned above. If $Z_0=1$ then $T_0^{(Z)}$ has the same  distribution as
$I:=\int_0^{\infty}e^{\zeta_t}dt$ and $1/I$ has the beta distribution with parameters $c$ and $1-c$, see Bertoin and Yor~\cite{BerYor05}. 

We now turn to the proof of part (b).
We start by computing the expectation 
$\e_x[Z_t^{1-c}]$. If $t\le x$ then, in view of \eqref{eq:trans.prob1},
\begin{align*}
\e_x[Z_t^{1-c}]
&=\int_{x-t}^\infty y^{1-c}\pr_x(Z_t\in dy)\\
&=(x-t)^{1-c}\left(\frac{x-t}{x}\right)^c
+\int_{x-t}^\infty\frac{ct}{(t+y)^{c+1}}dy\\
&=\frac{x-t}{x^c}+ct\int_x^\infty\frac{dy}{y^{c+1}}=x^{1-c}.
\end{align*}
If $t>x$ then, by \eqref{eq:trans.prob2},
$$
\e_x[Z_t^{1-c}]
=\int_{0}^\infty\frac{ct}{(t+y)^{c+1}}dy=t^{1-c}.
$$
Using these equalities, we obtain 
\begin{align}
\label{eq:Z_harm}
\nonumber
\e_x[Z_t^{1-c};T_0^{(Z)}>t]
&=\e_x[Z_t^{1-c}]-\e_x[Z_t^{1-c};T_0^{(Z)}\le t]\\
\nonumber
&=(\max\{t,x\})^{1-c}
-\int_0^t \pr_x(T_0^{(Z)}\in ds)\e_0[Z_{t-s}^{1-c}]\\
&=(\max\{t,x\})^{1-c}
-\int_0^t (t-s)^{1-c}\pr_x(T_0^{(Z)}\in ds).
\end{align}
It follows from \eqref{eq:tau_tail} that 
the integral in \eqref{eq:Z_harm} is zero for $t\le x$, and that for 
$t>x$ one has 
\begin{align*}
&\int_0^t (t-s)^{1-c}\pr_x(T_0^{(Z)}\in ds)\\
&\hspace{1cm}=\int_x^t (t-s)^{1-c}\pr_x(T_0^{(Z)}\in ds)\\
&\hspace{1cm}=\frac{1}{B(c,1-c)}\int_x^t (t-s)^{1-c}
\left(1-\frac{x}{s}\right)^{c-1}\left(\frac{x}{s}\right)^{-c}
\frac{x}{s^2}ds\\
&\hspace{1cm}=\frac{1}{B(c,1-c)}\int_x^t (t-s)^{1-c}
\left(1-\frac{x}{s}\right)^{c-1}\left(\frac{s}{x}\right)^{c-1}
\frac{1}{s}ds\\
&\hspace{1cm}=\frac{x^{1-c}}{B(c,1-c)}\int_x^t (t-s)^{1-c}
\left(s-x\right)^{c-1}\frac{1}{s}ds.
\end{align*}
With the help of the substitution
$v=\left(\frac{s-x}{t-s}\right)$ we get 
\begin{align*}
\int_x^t (t-s)^{1-c}\left(s-x\right)^{c-1}\frac{1}{s}ds
&=\int_0^\infty v^{c-1}\frac{1+v}{x+tv}
\left(\frac{t}{1+v}-\frac{x+tv}{(1+v)^2}\right)dv\\
&=t\int_0^\infty\frac{v^{c-1}}{x+tv}dv
-\int_0^\infty\frac{v^{c-1}}{1+v}dv\\
&=\left(\left(\frac{t}{x}\right)^{1-c}-1\right)
\int_0^\infty\frac{v^{c-1}}{1+v}dv.
\end{align*}
Noting now that $\int_0^\infty\frac{v^{c-1}}{1+v}dv=B(c,1-c)$, we conclude that
$$
\int_0^t (t-s)^{1-c}\pr_x(T_0^{(Z)}\in ds)
=\max\{t^{1-c}-x^{1-c},0\}.
$$
Plugging this into \eqref{eq:Z_harm}, we conclude that 
$$
\e_x[Z_t^{1-c};T_0^{(Z)}>t]=x^{1-c}
$$
for all $x,t>0$. Thus, (b) is proven.

To prove (c) we first consider one-dimensional marginals.
For $t\le x$ one has 
$$
\pr_x(Z_t\le y;T_0^{(Z)}>t)=\pr_x(Z_t\le y), \ y>0.
$$
If $t>x$ then 
\begin{align*}
\pr_x(Z_t\le y;T_0^{(Z)}>t)
&=\pr_x(Z_t\le y)-\pr_x(Z_t\le y;T_0^{(Z)}\le t)\\
&=\pr_x(Z_t\le y)-\int_x^t\pr_x(T_0^{(Z)}\in ds)
\pr_0(Z_{t-s}\le y).
\end{align*}
Using now \eqref{eq:trans.prob} and \eqref{eq:tau_tail}, we get 
\begin{align*}
&\pr_x(Z_t\le y;T_0^{(Z)}>t)\\
&\hspace{1cm}=\left(\frac{y}{y+t}\right)^c 
-\frac{1}{B(c,1-c)}\int_x^t\left(\frac{y}{y+t-s}\right)^c
\left(1-\frac{x}{s}\right)^{c-1}\left(\frac{x}{s}\right)^{-c}
\frac{x}{s^2}ds\\
&\hspace{1cm}=\left(\frac{y}{y+t}\right)^c 
-\frac{1}{B(c,1-c)}\int_x^t\left(\frac{y}{y+t-s}\right)^c
\left(\frac{s}{x}-1\right)^{c-1}\frac{1}{s}ds.
\end{align*}
This representation can be used to obtain an exact formula for 
the transition kernel $\pr_x(Z_t\le y;T_0^{(Z)}>t)$ in terms of the hypergeometric function of two variables. 
Instead of doing that we shall determine the asymptotic, as $x\to0$, behaviour of the distribution function $\pr_x(Z_t\le y;T_0^{(Z)}>t)$. We start by noting that 
\begin{align}
\label{eq:dist_repr}
\nonumber
&\pr_x(Z_t\le y;T_0^{(Z)}>t)\\
&\hspace{1cm}=\left(\frac{y}{y+t}\right)^c\pr_x(T_0^{(Z)}>t) 
-\frac{1}{B(c,1-c)}\int_x^t\Delta_{y,t}(s)
\left(\frac{s}{x}-1\right)^{c-1}\frac{1}{s}ds,
\end{align}
where 
$$
\Delta_{y,t}(s)=\left(\frac{y}{y+t-s}\right)^c
-\left(\frac{y}{y+t}\right)^c.
$$
Fix some $\varepsilon>0$. It is easy to see that 
$$
\Delta_{y,t}(s)=\frac{y^c}{(y+t)^c}
\left[\left(1+\frac{s}{t+y-s}\right)^c-1\right]
\le \frac{cy^c}{(y+t)^c}\frac{s}{y+t-\varepsilon}
$$
for all $s\le\varepsilon$. Therefore, for all $x<\varepsilon$,
\begin{align}
\label{eq:eps_bound.1}
\nonumber
\int_x^\varepsilon\Delta_{y,t}(s)
\left(\frac{s}{x}-1\right)^{c-1}\frac{1}{s}ds
&\le \frac{cy^c}{(y+t-\varepsilon)(y+t)^c}
\int_x^\varepsilon\left(\frac{s}{x}-1\right)^{c-1}ds\\
& \le \frac{y^c}{(y+t-\varepsilon)(y+t)^c}x^{1-c}\varepsilon^c.
\end{align}
Furthermore, as $x\to0$, 
\begin{align*}
\int_\varepsilon^t\Delta_{y,t}(s)
\left(\frac{s}{x}-1\right)^{c-1}\frac{1}{s}ds
&=x^{1-c}\int_\varepsilon^t\Delta_{y,t}(s)
\left(s-x\right)^{c-1}\frac{1}{s}ds\\
& =x^{1-c}(1+o(1))\int_\varepsilon^t\Delta_{y,t}(s)
s^{c-2}ds.
\end{align*}
Combining this with \eqref{eq:eps_bound.1} and letting $\varepsilon\to0$, we conclude that 
\begin{equation}
\label{eq:int_lim}
\lim_{x\to0} x^{c-1}\int_x^t\Delta_{y,t}(s)
\left(\frac{s}{x}-1\right)^{c-1}\frac{1}{s}ds
=\int_0^t\Delta_{y,t}(s)s^{c-2}ds.
\end{equation}
Using the equality 
$$
\Delta_{y,t}(s)=\int_{y/(t+y)}^{y/(t+y-s)}cu^{c-1}du
$$
and the Fubini theorem, we have 
\begin{align*}
\int_0^t\Delta_{y,t}(s)s^{c-2}ds
&=\int_0^t\left(\int_{y/(t+y)}^{y/(t+y-s)}cu^{c-1}du\right)s^{c-2}ds\\
&=\int_{y/(y+t)}^1cu^{c-1}\left(\int_{y+t-y/u}^ts^{c-2}ds\right)du\\
&=\frac{c}{1-c}\int _{y/(y+t)}^1cu^{c-1}
\left((y+t-y/u)^{c-1}-t^{c-1}\right)du\\
&=\frac{c}{1-c}\int _{y/(y+t)}^1 ((y+t)u-y)^{c-1}du
-\frac{c}{1-c}\int _{y/(y+t)}^1 u^{c-1}du\\
&=\frac{1}{1-c}\frac{t^c}{y+t}
-\frac{1}{1-c}t^{c-1}\left(1-\left(\frac{y}{y+t}\right)^c\right).
\end{align*}
Combining this with \eqref{eq:int_lim} and noting that 
\begin{align}
\label{eq:tail_small_x}
\pr_x(T_0^{(Z)}>t)\sim\frac{x^{1-c}}{(1-c)B(c,1-c)}t^{c-1}, 
\quad x\to0,
\end{align}
we conclude that 
$$
\lim_{x\to0}\frac{\int_x^t\Delta_{y,t}(s)
\left(\frac{s}{x}-1\right)^{c-1}\frac{1}{s}ds}
{\pr_x(T_0^{(Z)}>t)}
=B(c,1-c)\left[\left(\frac{y}{y+t}\right)^c-\frac{y}{y+t}\right].
$$
Combining this with \eqref{eq:dist_repr}, we finally obtain 
\begin{equation}
\label{eq:cond.lim.Z}
\lim_{x\to0}\pr_x(Z_t\le y|T_0^{(Z)}>t)=
\frac{y}{y+t},\quad y>0.
\end{equation}

Using the harmonic function $u(x)=x^{1-c}$ we now define the Doob $h$-transform of $\mathcal{L}$:
$$
\widehat{\mathcal{L}}f(x)
:=\frac{1}{u(x)}\mathcal{L}(uf)(x),\quad x>0.
$$
The corresponding probability measure is given by 
$$
\widehat{\e}_x[g(Z)]
:=\frac{1}{u(x)}\e_x[g(Z)u(Z_t);\tau_0^{(Z)}>t]
$$
for every bounded measurable functional $g$ on $D[0,t]$.

From \eqref{eq:gen1} we infer that 
\begin{align}
\label{eq:gen2}
\nonumber
\widehat{\mathcal{L}}f(x)
&=\frac{1}{u(x)}\left[-u(x)f'(x)-u'(x)f(x)-c\frac{u(x)f(x)}{x}
+c\int_x^\infty\frac{u(y)f(y)}{y^2}dy\right]\\
\nonumber
&=-f'(x)-\frac{f(x)}{x}
+\frac{c}{x^{1-c}}\int_x^\infty\frac{f(y)}{y^{1+c}}dy\\
&=-f'(x)+\frac{c}{x^{1-c}}\int_x^\infty\frac{f(y)-f(x)}{y^{1+c}}dy.
\end{align}
As a result we have the following representation:
\begin{align*}
\widehat{\mathcal{L}}f(x)
&=-f'(x)
+\frac{c}{x}\int_1^\infty\frac{f(ux)-f(x)}{u^{1+c}}du\\
&=-\left(1-c\int_1^\infty\frac{\log u}{1+\log^2u}\frac{du}{u^2}\right)
f'(x)
+\frac{1}{x}\int_1^\infty h^*(x,u)\frac{c\log^2u}{u^2(1+\log^2u)}du. 
\end{align*}
This implies that, under $\widehat{\pr}$, $Z$ is self-similar and can be expressed via a L\'evy process with the 
characteristic exponent 
$$
\widehat{\Psi}(\lambda)
=-i\lambda+\int_0^\infty(e^{i\lambda y}-1)e^{-cy}cdy.
$$
This corresponds to $\widehat{\zeta}_t-t$, where 
$(\widehat{\zeta}_t)_{t\ge 0}$ is a compound Poisson process with intensity $c$ and with positive jumps, which have exponential with parameter $c$ distribution. This L\'evy process is clearly oscillating. Consequently, 
$$
\widehat{\pr}_x(T_0^{(Z)}=\infty)=1,\quad x>0. 
$$

According to Theorem 2 in Caballero and Chaumont~\cite{CC06}, the sequence of measures $\widehat{\pr}_x$ converges weakly
on $D[0,1]$, as $x\to0$, to a non-degenerate probabilistic measure $\widehat{\pr}_0$. 
We now show that this implies that
$\pr_x\left(Z\in\cdot\ |T_0^{(Z)}>1\right)$ also converges weakly on $D[0,1]$. 

It follows from the definition of $\widehat{\pr}_x$ that 
\begin{align*}
\widehat{\pr}_0(Z_1\le y)
&=\lim_{x\to0}\widehat{\pr}_x(Z_1\le y)\\
&=\lim_{x\to0}\frac{\pr_x(T_0^{(Z)}>1)}{u(x)}
\e_x[u(Z_1){\rm 1}\{Z_1\le y\}|T_0^{(Z)}>1].
\end{align*}
Applying now \eqref{eq:tail_small_x} and \eqref{eq:cond.lim.Z}, we obtain 
\begin{align*}
\widehat{\pr}_0(Z_1\le y)
=\frac{1}{(1-c)B(c,1-c)}\int_0^y\frac{z^{1-c}}{(1+z)^2}dz.
\end{align*}
Consequently, the density of $Z_1$ under $\widehat{\pr}_0$
is proportional to $\frac{z^{1-c}}{(1+z)^2}$.
Let $g$ be a bounded and continuous functional on $D[0,1]$
and let $\varepsilon$ be a fixed positive number. Since 
$\widehat{\pr}_0(Z_1=\varepsilon)=0$, the weak convergence
$\widehat{\pr}_x\Rightarrow \widehat{\pr}_0$ implies that
\begin{align}
\label{eq:step.1}
\lim_{x\to0}
\widehat{\e}_x\left[\frac{g(Z)}{u(Z_1)};Z_1>\varepsilon\right]
=\widehat{\e}_0\left[\frac{g(Z)}{u(Z_1)};Z_1>\varepsilon\right]. 
\end{align}
Since $g$ is bounded,
\begin{align*}
\left|\widehat{\e}_x\left[\frac{g(Z)}{u(Z_1)};Z_1\le\varepsilon\right]\right|
&\le C_g \widehat{\e}_x\left[\frac{1}{u(Z_1)};Z_1\le\varepsilon\right]\\
&\le C_g\frac{\pr_x(T_0^{(Z)}>1)}{u(x)}
\pr_x(Z_1\le\varepsilon|T_0^{(Z)}>1).
\end{align*}
Using \eqref{eq:tail_small_x} and \eqref{eq:cond.lim.Z}, we conclude that 
\begin{align}
\label{eq:step.2}
\limsup_{x\to0}\left|\widehat{\e}_x\left[\frac{g(Z)}{u(Z_1)};Z_1\le\varepsilon\right]\right|
\le \frac{C_g}{(1-c)B(c,1-c)}\varepsilon.
\end{align}
Finally, recalling that the density of $Z_1$ under $\widehat{\pr}_0$ is proportional to $\frac{z^{1-c}}{(1+z)^2}$, we get 
\begin{align}
\label{eq:step.3}
\nonumber
\left|\widehat{\e}_0\left[\frac{g(Z)}{u(Z_1)};Z_1\le\varepsilon\right]\right|
&\le C_g \widehat{\e}_0\left[\frac{1}{u(Z_1)};Z_1\le\varepsilon\right]\\
&= C_g\int_0^\varepsilon (1+z)^{-2}dz
\le C_g\varepsilon.
\end{align}
Combining \eqref{eq:step.1}---\eqref{eq:step.3} and letting
$\varepsilon\to0$, we conclude that 
$$
\lim_{x\to0}
\widehat{\e}_x\left[\frac{g(Z)}{u(Z_1)}\right]
=\widehat{\e}_0\left[\frac{g(Z)}{u(Z_1)}\right].
$$
Noting now that 
$$
\e_x[g(Z)|T_0^{(Z)}>1]=\frac{u(x)}{\pr_x(T_0^{(Z)}>1)}
\widehat{\e}_x\left[\frac{g(Z)}{u(Z_1)}\right]
$$
and taking into account \eqref{eq:tail_small_x}, we obtain 
$$
\lim_{x\to\infty}\e_x[g(Z)|T_0^{(Z)}>1]
=(1-c)B(c,1-c)
\widehat{\e}_0\left[\frac{g(Z)}{u(Z_1)}\right].
$$
This completes the proof of the theorem.
\section{Construction of harmonic functions}
\label{sec:harmonic}
\subsection{Harmonic function for the random exchange process and for the maximal autoregressive process}
In this paragraph we shall consider the equation
\begin{equation}
\label{eq:harm.f}
G(x):=\e_x[G(R_1);T^{(R)}_{x_0}>1]=\e_x[G(R_1);R_1>x_0],\quad x>x_0.
\end{equation}

Assume first that $x\in(x_0,x_0+1]$. In this case one has
$$
\{R_1>x_0\}=\{R_1=\eta_1>x_0\}.
$$
Therefore,
$$
G(x)=\e[G(\eta_1);\eta_1>x_0]\quad\text{for all }x\in(x_0,x_0+1]. 
$$

For all $x>x_0+1$ one has $\pr_x(T^{(R)}_{x_0}>1)=1$. This implies that \eqref{eq:harm.f} reduces to 
\begin{align}
\label{eq:harm.f.2}
\nonumber
G(x)&=\e_x[G(R_1)]\\
&=G(x-1)\pr(\eta_1\le x-1)+\e[G(\eta_1);\eta_1>x-1],
\quad x>x_0+1. 
\end{align}
If $x\in(x_0+1,x_0+2]$ then $x-1\in(x_0,x_0+1]$ and, consequently,
$G(x-1)=G(x_0+1)$ for all $x\in(x_0+1,x_0+2]$. From this observation
and from \eqref{eq:harm.f.2} we have
\begin{align}
\label{eq:x0+2}
\nonumber
&G(x)\\
\nonumber
&=G(x_0+1)\pr(\eta_1\le x-1)+\e[G(\eta_1);\eta_1>x-1]\\
\nonumber
&=G(x_0+1)\pr(\eta_1\le x-1)+\e[G(\eta_1);\eta_1\in(x-1,x_0+1]]+\e[G(\eta_1);\eta_1>x_0+1]\\
&=G(x_0+1)\pr(\eta_1\le x_0+1)+\e[G(\eta_1);\eta_1>x_0+1].
\end{align}
This equality implies that $G(x)=G(x_0+2)$ for all $x\in(x_0+1,x_0+2]$.
Note also that
\begin{align*}
G(x_0+1)&=\e[G(\eta_1);\eta_1>x_0]\\
&=G(x_0+1)\pr(\eta_1\in(x_0,x_0+1]))+\e[G(\eta_1);\eta_1>x_0+1].
\end{align*}
Combining this with \eqref{eq:x0+2}, we conclude that
$$
G(x_0+2)=G(x_0+1)\left(1+\pr(\eta_1\le x_0)\right).
$$

Fix now an integer $n$ and consider the case $x\in(x_0+n,x_0+n+1]$.
Assume that we have already shown that $G(y)=G(x_0+n)$ for all
$y\in(x_0+n-1,x_0+n]$. Then we have from \eqref{eq:harm.f.2}
\begin{align*}
G(x)&=G(x-1)\pr(\eta_1\le x-1)+\e[G(\eta_1);\eta_1>x-1]\\
&=G(x_0+n)\pr(\eta_1\le x_0+n)+\e[G(\eta_1);\eta_1>x_0+n].
\end{align*}
Therefore, $G(x)=G(x_0+n+1)$ for all $x\in(x_0+n,x_0+n+1]$. This means that this property is valid for all $n$.

One has also equalities
$$
G(x_0+n+1)=G(x_0+n)\pr(\eta_1\le x_0+n)+\e[G(\eta_1);\eta_1>x_0+n]
$$
and
\begin{align*}
G(x_0+n)&=G(x_0+n-1)\pr(\eta_1\le x_0+n-1)+\e[G(\eta_1);\eta_1>x_0+n-1]\\
&=G(x_0+n-1)\pr(\eta_1\le x_0+n-1)\\
&\quad+G(x_0+n)\pr(\eta_1\in(x_0+n-1,x_0+n])
+\e[G(\eta_1);\eta_1>x_0+n].
\end{align*}
Taking the difference we obtain
\begin{align*}
&G(x_0+n+1)-G(x_0+n)\\
&\quad=G(x_0+n)\pr(\eta_1\le x_0+n)-G(x_0+n-1)\pr(\eta_1\le x_0+n-1)\\
&\hspace{2cm}-G(x_0+n)\pr(\eta_1\in(x_0+n-1,x_0+n])\\
&\quad=\pr(\eta_1\le x_0+n-1)\left(G(x_0+n)-G(x_0+n-1)\right).
\end{align*}
Consequently,
$$
G(x_0+n+1)-G(x_0+n)=
G(x_0+1)\prod_{k=0}^{n-1}\pr(\eta_1\le x_0+k),\quad n\ge1.
$$
As a result we have 
\begin{equation}
\label{eq:harm.f.3}
G(x)=G(x_0+1)\left(1+\sum_{j=1}^{n}\prod_{k=0}^{j-1}\pr(\eta_1\le x_0+k)\right),\ x\in(x_0+n,x_0+n+1].
\end{equation}
Finally, in order to get a non-trivial solution we have to show that
the equation 
$$
G(x_0+1)=\e[G(\eta_1);\eta_1>x_0]
$$
is solvable. In view of \eqref{eq:harm.f.3}, the previous equation is equivalent to
$$
G(x_0+1)=G(x_0+1)\sum_{n=0}^\infty 
\left(1+\sum_{j=1}^{n}\prod_{k=0}^{j-1}\pr(\eta_1\le x_0+k)\right)
\pr(\eta_1\in(x_0+n,x_0+n+1]).
$$
Now we infer that \eqref{eq:harm.f} has a non-trivial solution if and only if
$$
1=\sum_{n=0}^\infty 
\left(1+\sum_{j=1}^{n}\prod_{k=0}^{j-1}\pr(\eta_1\le x_0+k)\right)
\pr(\eta_1\in(x_0+n,x_0+n+1]).
$$
Clearly,
\begin{align*}
&\sum_{n=0}^\infty 
\left(1+\sum_{j=1}^{n}\prod_{k=0}^{j-1}\pr(\eta_1\le x_0+k)\right)
\pr(\eta_1\in(x_0+n,x_0+n+1])\\
&=\pr(\eta_1>x_0)+\sum_{j=1}^\infty\prod_{k=0}^{j-1}\pr(\eta_1\le x_0+k)
\sum_{n=j}^\infty \pr(\eta_1\in(x_0+n,x_0+n+1])\\
&=\pr(\eta_1>x_0)+\sum_{j=1}^\infty
\left(1-\pr(\eta_1\le x_0+j)\right)\prod_{k=0}^{j-1}\pr(\eta_1\le x_0+k).
\end{align*}
Furthermore, for every $N\ge 1$,
\begin{align*}
&\sum_{j=1}^N
\left(1-\pr(\eta_1\le x_0+j)\right)\prod_{k=0}^{j-1}\pr(\eta_1\le x_0+k)\\
&\hspace{1cm}=\sum_{j=1}^N\prod_{k=0}^{j-1}\pr(\eta_1\le x_0+k)
-\sum_{j=1}^N\prod_{k=0}^{j}\pr(\eta_1\le x_0+k)\\
&\hspace{1cm}=\pr(\eta_1\le x_0)-\prod_{k=0}^{N}\pr(\eta_1\le x_0+k).
\end{align*}
This implies that 
\begin{align*}
&\sum_{n=0}^\infty 
\left(1+\sum_{j=1}^{n}\prod_{k=0}^{j-1}\pr(\eta_1\le x_0+k)\right)
\pr(\eta_1\in(x_0+n,x_0+n+1])\\
&\hspace{1cm}=1-\lim_{N\to\infty}\prod_{k=0}^{N}\pr(\eta_1\le x_0+k).
\end{align*}
Thus, there is a non trivial solution $G(x)$ if and only if 
$$
\lim_{N\to\infty}\prod_{k=0}^{N}\pr(\eta_1\le x_0+k)=0.
$$
Noting that this is equivalent to $\e\eta_1^+=\infty$, we finish the proof of the first part of Theorem~\ref{thm:Rn}.
We notice also that $\e\eta_1^+=\infty$ implies that $\{R_n\}$
is either null recurrent or transient.

If $\{R_n\}$ is recurrent and $\pr(\eta_1\le x_0)>0$ then, according to \eqref{eq:recurrence}, the function $G(x)$ grows unboundedly. Furthermore, if \eqref{eq:log.tail.2} holds with some positive $c$ then it follows from the Karamata representation theorem that there exists a slowly varying function $L$ such that
\begin{equation}
\label{eq:karamata}
\prod_{k=0}^{j-1}\pr(\eta_1\le x_0+k)\sim \frac{L(j)}{j^{c}}
\quad\text{as }j\to\infty.
\end{equation}
If we assume that $c\in(0,1)$ then $\{R_n\}$ is null recurrent and 
\begin{equation}
\label{eq:harm.f.4}
G(x)\sim \frac{1}{1-c}x^{1-c}L(x),\quad x\to\infty.
\end{equation}

If $c=1$ then one has to take into account the asymptotic behaviour of the difference $\pr(\eta_1>y)-1/y$. Assume, for example, that
$$
\pr(\eta_1>y)=\frac{1}{y}+\frac{\theta+o(1)}{y\log y}
$$
for some $\theta\in(0,1)$.
Then $\{R_n\}$ is null recurrent and there exists a slowly varying function $L_1$ such that $L(x)\sim (\log x)^{-\theta}L_1(\log x)$. This implies that
$$
G(x)\sim \frac{1}{1-\theta}(\log x)^{1-\theta}L_1(\log x)
\quad\text{if }\theta<1.
$$

We conclude this paragraph with the following remark on the transient case. If $\{R_n\}$ is transient then the function 
$x\mapsto\pr_x(T^{(R)}_{x_0}=\infty)$ is harmonic and its limit, as $x\to\infty$, is equal to one. Then, according to \eqref{eq:harm.f.3},
\begin{equation}
\label{eq:return.prob}
\pr_x(T^{(R)}_{x_0}=\infty)=
\frac{1+\sum_{j\in[1,x-x_0)}\prod_{k=0}^{j-1}\pr(\eta_1\le x_0+k)}
{1+\sum_{j=1}^{\infty}\prod_{k=0}^{j-1}\pr(\eta_1\le x_0+k)},
\quad x>x_0.
\end{equation}
If \eqref{eq:log.tail.2} holds with some positive $c>1$ then the chain is transient and, using \eqref{eq:karamata}, we obtain
$$
\pr_x(T^{(R)}_{x_0}<\infty)\sim \frac{1}{(c-1)}\frac{L(x)}{x^{c-1}},\quad
x\to\infty.
$$

\subsection{Harmonic function for the autoregressive process:
proof of Theorem~\ref{thm:tail_of_T}(i)}
\begin{lemma}
\label{lem:mean_drift}
Let $W$ be an increasing, regularly varying of index $r\in(0,1)$
function. We assume also that 
$W'(x)=O\left(\frac{W(x)}{x}\right)$. If \eqref{eq:log.tail.2} holds then, as $z\to\infty$,
\begin{align*}
&\e[W(\log_A (A^{z-1}+A^{\eta_1}))]\\
&\hspace{1cm}=W(z-1)\pr(\eta_1\le z-1)+\e[W(\eta_1);\eta_1>z-1]
+o\left(\frac{W(z)}{z^2}\right).
\end{align*}
\end{lemma}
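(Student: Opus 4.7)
The plan is to write
\[
\log_A(A^{z-1}+A^{\eta_1})=\max(z-1,\eta_1)+\delta
\]
with correction $\delta:=\log_A(1+A^{-|z-1-\eta_1|})$; the elementary inequality $\log(1+x)\le x$ gives the uniform pointwise bound $0\le\delta\le A^{-|z-1-\eta_1|}/\log A$. Splitting the expectation on $\{\eta_1\le z-1\}$ and $\{\eta_1>z-1\}$ produces the two main terms of the lemma, reducing the problem to showing that the error $\e[W(\max(z-1,\eta_1)+\delta)-W(\max(z-1,\eta_1))]$ is $o(W(z)/z^2)$.

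To control this error I would Taylor-expand: $W(m+\delta)-W(m)=\int_0^\delta W'(m+s)\,ds$. By the hypothesis $W'(x)=O(W(x)/x)$ and the regular variation of $W$, one has $W'(y)=O(W(z)/z)$ uniformly for $y$ in any bounded neighbourhood of $z$; moreover, since $W(y)/y$ is regularly varying of negative index $r-1$ and hence eventually decreasing, $W(\eta_1)/\eta_1\le CW(z)/z$ on $\{\eta_1\ge z-1\}$. Combined with the pointwise bound on $\delta$, the error is at most
\[
C\,\frac{W(z)}{z}\Bigl(\e\bigl[A^{\eta_1-(z-1)};\eta_1\le z-1\bigr]+\e\bigl[A^{(z-1)-\eta_1};\eta_1>z-1\bigr]\Bigr),
\]
so it remains to show each of these two expectations is $o(1/z)$.

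For the first expectation, assuming $\eta_1\ge0$ for simplicity, one integration by parts yields $\e[A^{\eta_1};\eta_1\le z-1]=1-A^{z-1}\overline F(z-1)+\log A\int_0^{z-1}A^y\overline F(y)\,dy$. After substituting $v=z-1-y$, the integral becomes $A^{z-1}\int_0^{z-1}A^{-v}\overline F(z-1-v)\,dv$; the exponential factor $A^{-v}$ concentrates the mass near $v=0$, where $\overline F(z-1-v)\sim c/(z-1)$, and the Laplace method gives $A^{z-1}\overline F(z-1)/\log A+o(A^{z-1}/z)$. Dividing by $A^{z-1}$, the leading $\overline F(z-1)$ contributions cancel, leaving $o(1/z)$; a symmetric argument handles the second expectation. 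I expect the main technical obstacle to lie in executing this cancellation rigorously using only the first-order tail asymptotic $\overline F(y)\sim c/y$: one must show that the Laplace approximation reproduces $\overline F(z-1)$ up to an $o(1/z)$ error, and one must bound the tail contribution $v\ge\log z$ (which is not automatically $o(1/z)$ when $\log A<1$) by splitting it into $[\log z,z/2]$ and $[z/2,z-1]$ and using the monotone bounds $\overline F\le 2c/z$ and $\overline F\le1$ respectively to absorb the exponential decay of $A^{-v}$.
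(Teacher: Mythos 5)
Your decomposition is exactly the paper's: both split on $\{\eta_1\le z-1\}$ vs.\ $\{\eta_1>z-1\}$, write $\log_A(A^{z-1}+A^{\eta_1})=\max(z-1,\eta_1)+\delta$ with $\delta=\log_A(1+A^{-|z-1-\eta_1|})$, and use $W'(x)=O(W(x)/x)$ together with regular variation to reduce everything to showing $\e[\delta;\eta_1\le z-1]=o(1/z)$ (and symmetrically on the other event). The difference is how that last expectation is handled. The paper simply partitions $\{\eta_1\le z-1\}$ into $\{\eta_1\le z-1-2\log_A z\}$, where $\delta=O(z^{-2})$ directly, and the thin annulus $\{z-1-2\log_A z<\eta_1\le z-1\}$, whose probability is $o(1/z)$ because $\overline F(y)\sim c/y$ makes the difference $\overline F(z-1-2\log_A z)-\overline F(z-1)=O(\log z/z^2)+o(1/z)$. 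Since $\delta$ is uniformly bounded (by $\log_A 2$), the error is $O(z^{-2})+o(1/z)=o(1/z)$ and there is nothing further to compute. You instead invoke the sharper pointwise bound $\delta\le A^{-|z-1-\eta_1|}/\log A$ and then evaluate $\e[A^{\eta_1-(z-1)};\eta_1\le z-1]$ by integration by parts and a Laplace estimate, verifying that the leading $\overline F(z-1)$ terms cancel. Your outline is correct (the cancellation does happen, the tail $v\ge\log z$ can be killed by $A^{-v}$ as you say), so there is no gap, but the route is substantially more work than needed: the same bulk/annulus split applied to your own bound on $\delta$ already gives $\e[A^{\eta_1-(z-1)};\eta_1\le z-1]\le z^{-2}+\pr(z-1-2\log_A z<\eta_1\le z-1)=o(1/z)$ with no IBP, no cancellation of terms, and no case analysis on $\log A$. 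In short: same idea, but you are doing an exact calculation where a crude two-region bound suffices, and the crude bound is both simpler and robust (it does not require $\eta_1\ge0$ or any bookkeeping of boundary terms).
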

\begin{proof}
We start by decomposing the expectation into two parts:
\begin{align*}
&\e[W(\log_A (A^{z-1}+A^{\eta_1}))]\\
&=\e[W(\log_A (A^{z-1}+A^{\eta_1}));\eta_1\le z-1]
+\e[W(\log_A (A^{z-1}+A^{\eta_1}));\eta_1>z-1]\\
&=\e[W(z-1+\log_A(1+A^{\eta_1-z+1}));\eta_1\le z-1]\\
&\hspace{2cm}
+\e[W(\eta_1+\log_A (1+A^{z-1-\eta_1}));\eta_1>z-1].
\end{align*}
By the mean value theorem,
\begin{align*}
&\e[W(z-1+\log_A (1+A^{\eta_1-z+1}));\eta_1\le z-1]\\
&=W(z-1)\pr(\eta_1\le z-1)+
\e[W'(z-1+\theta_1)\log_A (1+A^{\eta_1-z+1});\eta_1\le z-1],
\end{align*}
where $\theta_1=\theta_1(z,\eta_1)\in(0,\log_A 2)$. Using now the assumption $W'(x)=O\left(\frac{W(x)}{x}\right)$, we obtain 
\begin{align*}
&\e[W(z-1+\log_A (+A^{\eta_1-z+1}));\eta_1\le z-1]\\
&=W(z-1)\pr(\eta_1\le z-1)+
O\left(\frac{W(z)}{z}\right)
\e[\log_A (1+A^{\eta_1-z+1});\eta_1\le z-1].
\end{align*}
It is easy to see that 
$$
\log_A (1+A^{\eta_1-z+1})=O\left(\frac{1}{z^2}\right)
$$
if $\eta_1\le z-1-2\log_Az$. Furthermore, \eqref{eq:log.tail.2} implies that 
$$
\pr(z-1-2\log_Az<\eta_1\le z-1)=o\left(\frac{1}{z}\right).
$$
Combining these relations, we infer that 
$$
\e[\log_A (1+A^{\eta_1-z+1});\eta_1\le z-1]
=o\left(\frac{1}{z}\right).
$$
As a result we have 
\begin{align}
\label{eq:L1.1}
\nonumber
&\e[W(z-1+\log_A (+A^{\eta_1-z+1}));\eta_1\le z-1]\\
&\hspace{2cm}
=W(z-1)\pr(\eta_1\le z-1)+o\left(\frac{W(z)}{z^2}\right).
\end{align}
Using the mean value theorem and the assumption
$W'(x)=O\left(\frac{W(x)}{x}\right)$ once again, we get 
\begin{align*}
&\e[W(\eta_1+\log_A (1+A^{z-1-\eta_1}));\eta_1>z-1]\\
&=\e[W(\eta_1);\eta_1>z-1]+
O\left(\frac{W(z)}{z}\right)
\e[\log_A (1+A^{z-1-\eta_1});\eta_1>z-1].
\end{align*}
Similar to the first part of the proof,
$$
\e[\log_A (1+A^{z-1-\eta_1});\eta_1>z-1]
=o\left(\frac{1}{z}\right).
$$
This leads to the equality 
\begin{align*}
&\e[W(\eta_1+\log_A (1+A^{z-1-\eta_1}));\eta_1>z-1]\\
&\hspace{2cm}
=\e[W(\eta_1);\eta_1>z-1]+o\left(\frac{W(z)}{z^2}\right).\end{align*}
Combining this with \eqref{eq:L1.1}, we obtain the desired equality.
\end{proof}
For every $\varepsilon\ge0$ we define
$$
u_\varepsilon(x)=(1+\varepsilon)\int_0^x\pr(\eta_1>y)dy,\quad x\ge0
$$
and
$$
U_\varepsilon(x)=
\left\{ 
\begin{array}{ll}
0, &x\le 0\\
\int_0^x e^{-u_\varepsilon(y)}dy, &x>0.
\end{array}
\right.
$$
\begin{lemma}
\label{lem:L2}
For every $\varepsilon\in[0,\frac{1-c}{c})$ one has
$$
\e[U_\varepsilon(\log_A(A^{z-1}+A^{\eta_1}))]
=U_\varepsilon(z)
-\frac{\varepsilon}{1+\varepsilon}e^{-u_\varepsilon(z)}
+O\left(\frac{U_\varepsilon(z)}{z^2}\right).
$$
\end{lemma}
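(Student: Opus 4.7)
The plan is to apply Lemma~\ref{lem:mean_drift} with $W = U_\varepsilon$ and then compute the resulting boundary terms in closed form. First I would check that $U_\varepsilon$ satisfies the hypotheses of Lemma~\ref{lem:mean_drift}: it is increasing because $U_\varepsilon'(x) = e^{-u_\varepsilon(x)} > 0$; since $u_\varepsilon(x) \sim (1+\varepsilon)c\log x$, the integrand $e^{-u_\varepsilon(y)}$ is regularly varying of index $-(1+\varepsilon)c$, and $(1+\varepsilon)c < 1$ by the restriction $\varepsilon < (1-c)/c$, so Karamata's theorem gives $U_\varepsilon(x) \sim \frac{x e^{-u_\varepsilon(x)}}{1 - (1+\varepsilon)c}$, which is regularly varying of index $r = 1 - (1+\varepsilon)c \in (0,1)$. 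The same estimate yields $U_\varepsilon'(x) = e^{-u_\varepsilon(x)} = O(U_\varepsilon(x)/x)$, which is exactly what Lemma~\ref{lem:mean_drift} requires. Note also that this gives the key conversion $U_\varepsilon(z)/z^2 \asymp e^{-u_\varepsilon(z)}/z$, which I will use throughout.

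Applying Lemma~\ref{lem:mean_drift} reduces the problem to analysing
$$
U_\varepsilon(z-1)\pr(\eta_1 \le z-1) + \e[U_\varepsilon(\eta_1); \eta_1 > z-1].
$$
I would rewrite this as $U_\varepsilon(z-1) + \e[U_\varepsilon(\eta_1) - U_\varepsilon(z-1); \eta_1 > z-1]$ and apply Fubini to convert the expectation into $\int_{z-1}^\infty U_\varepsilon'(y)\pr(\eta_1 > y)\,dy$. The crucial observation is that $u_\varepsilon'(y) = (1+\varepsilon)\pr(\eta_1 > y)$, so
$$
e^{-u_\varepsilon(y)}\pr(\eta_1 > y) = -\frac{1}{1+\varepsilon}\frac{d}{dy}e^{-u_\varepsilon(y)},
$$
which integrates exactly to $\frac{1}{1+\varepsilon} e^{-u_\varepsilon(z-1)}$.

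What remains is to verify that
$$
U_\varepsilon(z) - U_\varepsilon(z-1) = \frac{1}{1+\varepsilon}e^{-u_\varepsilon(z-1)} + \frac{\varepsilon}{1+\varepsilon}e^{-u_\varepsilon(z)} + O\!\left(\frac{U_\varepsilon(z)}{z^2}\right).
$$
Here I would use that $u_\varepsilon(z) - u_\varepsilon(y) = (1+\varepsilon)\int_y^z \pr(\eta_1 > s)\,ds = O(1/z)$ uniformly for $y \in [z-1,z]$, by \eqref{eq:log.tail.2}. Hence $e^{-u_\varepsilon(y)} = e^{-u_\varepsilon(z)}(1 + O(1/z))$ uniformly on $[z-1,z]$, giving both $\int_{z-1}^z e^{-u_\varepsilon(y)}\,dy = e^{-u_\varepsilon(z)} + O(e^{-u_\varepsilon(z)}/z)$ and $e^{-u_\varepsilon(z-1)} = e^{-u_\varepsilon(z)} + O(e^{-u_\varepsilon(z)}/z)$. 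Substituting these into the desired identity makes the leading terms collapse by the algebraic identity $1 - \frac{1}{1+\varepsilon} - \frac{\varepsilon}{1+\varepsilon} = 0$, and the remaining error is $O(e^{-u_\varepsilon(z)}/z) = O(U_\varepsilon(z)/z^2)$, which matches the claim.

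The step most likely to demand care is tracking all the $O(1/z)$ errors at the right order: the conclusion has error $O(U_\varepsilon(z)/z^2)$, which is of the same magnitude as $e^{-u_\varepsilon(z)}/z$ rather than $U_\varepsilon(z)/z$, so the cancellation coming from the algebraic identity above is essential — without it one would only obtain a leading error of order $e^{-u_\varepsilon(z)}$. The precise coefficient $\varepsilon/(1+\varepsilon)$ in the statement is forced by exactly this cancellation, which is the reassuring structural feature driving the proof.
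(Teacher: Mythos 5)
Your proposal is correct and follows essentially the same route as the paper: verify the hypotheses of Lemma~\ref{lem:mean_drift} for $U_\varepsilon$, apply it, then use the exact integral identity $\int_{z-1}^\infty e^{-u_\varepsilon(y)}\pr(\eta_1>y)\,dy = \frac{1}{1+\varepsilon}e^{-u_\varepsilon(z-1)}$ (your Fubini step is the same integration-by-parts the paper performs), and finally compare $U_\varepsilon(z)-U_\varepsilon(z-1)$ against $e^{-u_\varepsilon(\cdot)}$ up to an $O(U_\varepsilon(z)/z^2)$ error. The only cosmetic difference is that the paper does the last step via a second-order Taylor expansion of $U_\varepsilon$ at $z-1$ and leaves the exponential evaluated at $z-1$, whereas you work with $\int_{z-1}^z e^{-u_\varepsilon(y)}\,dy$ directly and explicitly absorb the $z-1\mapsto z$ shift into the error term, which is a detail the paper glosses over; both are fine and the error-order bookkeeping you carry out is exactly right.
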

\begin{proof}
\eqref{eq:log.tail.2} yields
$$
u_\varepsilon(x)\sim (1+\varepsilon)c\log x
\quad\text{as }x\to\infty.
$$
Furthermore, $U_\varepsilon(x)$ is regularly varying of index $1-c(1+\varepsilon)$ and that 
$$
U'_\varepsilon(x)=e^{-u_\varepsilon(x)}
\sim (1-c(1+\varepsilon))\frac{U_\varepsilon(x)}{x}. 
$$
Therefore, we may apply Lemma~\ref{lem:mean_drift} to the function $U_\varepsilon$:
\begin{align*}
&\e[U_\varepsilon(\log_A(A^{z-1}+A^{\eta_1}))]\\
&\hspace{1cm}=U_\varepsilon(z-1)\pr(\eta_1\le z-1)
+\e[U_\varepsilon(\eta_1);\eta_1>z-1]
+o\left(\frac{U_\varepsilon(z)}{z^2}\right).
\end{align*}
Integrating by parts, we have 
\begin{align*}
\e[U_\varepsilon(\eta_1);\eta_1>z-1]
&=U_\varepsilon(z-1)\pr(\eta_1>z-1)
+\int_{z-1}^\infty e^{-u_\varepsilon(y)}\pr(\eta_1>y)dy\\
&=U_\varepsilon(z-1)\pr(\eta_1>z-1)
+\frac{1}{1+\varepsilon}
\int_{z-1}^\infty e^{-u_\varepsilon(y)}u_\varepsilon'(y)dy\\
&=U_\varepsilon(z-1)\pr(\eta_1>z-1)
+\frac{1}{1+\varepsilon}e^{-u_\varepsilon(z-1)}.
\end{align*}
Consequently,
$$
\e[U_\varepsilon(\log_A(A^{z-1}+A^{\eta_1}))]
=U_\varepsilon(z-1)
+\frac{1}{1+\varepsilon}e^{-u_\varepsilon(z-1)}
+o\left(\frac{U_\varepsilon(z)}{z^2}\right).
$$
It remains now to notice that, by the Taylor formula, 
\begin{align*}
U_\varepsilon(z)
&=U_\varepsilon(z-1)+U_\varepsilon'(z-1)
+\frac{1}{2}U_\varepsilon''(z-1+\theta)\\
&=U_\varepsilon(z-1)+e^{-u_\varepsilon(z-1)}
+O\left(\frac{U_\varepsilon(z)}{z^2}\right).
\end{align*}
\end{proof}
Applying Lemma~\ref{lem:L2} and noting that $U_\varepsilon(x)=o(U_0(x))$, we get  
\begin{align*}
&\e_x[U_0(\log_A X_1)+U_\varepsilon(\log_A X_1)]\\
&\hspace{1cm}=\e[U_0(\log_A(A^{x-1}+A^{\eta_1}))
+U_\varepsilon(\log_A(A^{x-1}+A^{\eta_1}))]\\
&\hspace{1cm}=U_0(\log_A x)+U_\varepsilon(\log_A x)
-\frac{\varepsilon}{1+\varepsilon}e^{-u_\varepsilon(\log_A x)}
+O\left(\frac{U_0(\log_A x)}{(\log_A x)^2}\right).
\end{align*}
We know that $e^{-u_\varepsilon(z)}$ is regularly varying of index $-(1+\varepsilon)c$ and that $\frac{U_0(z)}{z^2}$ is regularly varying of index $-c-1$. Thus, for every $\varepsilon<\frac{1-c}{c}$ there exists $x^*$ such that 
\begin{equation*}
 \e_x[U_0(\log_A X_1)+U_\varepsilon(\log_A X_1)]
\le U_0(\log_A x)+U_\varepsilon(\log_A x),\quad x\ge x^*.
\end{equation*}
This inequality implies that if $x_0\ge x^*$ then the sequence
$$
Z_n:=U_0(\log_A X_{n\wedge T^{(X)}_{x_0}})
+U_\varepsilon(\log_A X_{n\wedge T^{(X)}_{x_0}})
$$
is a supermartingale. We next notice that 
\begin{align*}
&Z_{n+1}{\rm 1}\{T^{(X)}_{x_0}>n+1\}
-Z_{n}{\rm 1}\{T^{(X)}_{x_0}>n\}\\
&\hspace{2cm}=(Z_{n+1}-Z_{n}){\rm 1}\{T^{(X)}_{x_0}>n\}
-Z_{n+1}{\rm 1}\{T^{(X)}_{x_0}=n+1\}\\
&\hspace{2cm}\le (Z_{n+1}-Z_{n}){\rm 1}\{T^{(X)}_{x_0}>n\}.
\end{align*}
This implies that $Z_n{\rm 1}\{T^{(X)}_{x_0}>n\}$ is also a supermartingale. Consequently, the function
$$
V_\varepsilon(x)
:=\lim_{n\to\infty}
\e_x[U_0(\log_A X_n)+U_\varepsilon(\log_A X_n);
T^{(X)}_{x_0}>n]
$$
is finite. Furthermore, 
$$
V_\varepsilon(x)\le U_0(\log_A x)+U_\varepsilon(\log_A x)
\le CU_0(\log_A x),
\quad x>x_0.
$$
We now recall that $U_\varepsilon(z)=o(U_0(z))$. Thus, for every $\delta>0$ there exists $B$ such that $U_\varepsilon(z)\le\delta U_0(z)$ for all $z\ge B$. Therefore,
\begin{align*}
&\e_x[U_\varepsilon(\log_A X_n);T^{(X)}_{x_0}>n]\\
&\hspace{1cm}=\e_x[U_\varepsilon(\log_A X_n);\log_A X_n\le B,T^{(X)}_{x_0}>n]\\
&\hspace{3cm}
+\e_x[U_\varepsilon(\log_A X_n);\log_A X_n>B,T^{(X)}_{x_0}>n]\\
&\hspace{1cm}\le U_\varepsilon(B)\pr_x(T^{(X)}_{x_0}>n)
+\delta\e_x[U_0(\log_A X_n);T^{(X)}_{x_0}>n].
\end{align*}
Recalling that $\pr_X(T_{x_0}^{(X)}>n)\to0$, we get 
$$
\limsup_{n\to\infty}
\e_x[U_\varepsilon(\log_A X_n);T^{(X)}_{x_0}>n]
\le \delta V_\varepsilon(x).
$$
Letting now $\delta\to0$ we conclude that
$$
\lim_{n\to\infty}
\e_x[U_\varepsilon(\log_A X_n);T^{(X)}_{x_0}>n]=0
$$
This means that $V_\varepsilon$ does not depend on $\varepsilon$.
Thus we may set 
$$
V(x):=\lim_{n\to\infty}\e_x[U_0(\log_A X_n);T^{(X)}_{x_0}>n].
$$
Since $U_0$ and the chain $\{X_n\}$ are increasing, we infer that the function $V(x)$ is increasing as well.

By the Markov property,
$$
\e_x[U_0(\log_AX_{n+1});T^{(X)}_{x_0}>n+1]
=\int_{x_0}^\infty\pr_x(X_1\in dy)\e_y[U_0(\log_AX_{n});T^{(X)}_{x_0}>n].
$$
It follows from the supermartinale property of
$U_0(\log_AX_{n})+U_\varepsilon(\log_AX_{n})$ that 
\begin{align*}
\e_y[U_0(\log_AX_{n});T^{(X)}_{x_0}>n]
&\le \e_y[U_0(\log_AX_{n})+U_\varepsilon(\log_AX_{n});T^{(X)}_{x_0}>n]\\
&\le U_0(\log_Ay)+U_\varepsilon(\log_Ay),\quad n\ge 1
\end{align*}
This allows one to apply the dominated convergence theorem and to conclude that
$$
V(x)=\e_x[V(X_1);T^{(X)}_{x_0}>1],\quad x>x_0.
$$
In other words, $V(x)$ is harmonic for $X_n$ killed at $T^{(X)}_{x_0}$.
It is also clear that 
$$
V(x)
\le U_0(\log_A x)+U_\varepsilon(\log_A x)
\le CU_0(\log_A x).
$$

To show that this function is strictly positive we notice that 
$$
\e[U_0(\log_A(A^{z-1}+A^{\eta_1}))]
\ge U_0(z-1)\pr(\eta_1\le z-1)+\e[U_0(\eta_1);\eta_1>z-1].
$$
Using now the integration by parts, we get 
$$
\e[U_0(\log_A(A^{z-1}+A^{\eta_1}))]
\ge U_0(z-1)+e^{-u_0(z-1)}\ge U_0(z). 
$$
In other words, the sequence $U_0(\log_A X_n)$ is a submartingale. Then, by the optional stopping theorem,
$$
\e_x[U_0(\log_A X_n);T^{(X)}_{x_0}>n]
\ge U_0(\log_A x)-\e_x[U_0(\log_A X_{T^{(X)}_{x_0}});T^{(X)}_{x_0}\le n].
$$
Letting here $n\to\infty$, we conclude that 
$$
V(x)\ge U_0(\log_A x)-\e[U_0(\log_A X_{T^{(X)}_{x_0}})]
\ge U_0(\log_A x)-U_0(\log_A x_0).
$$
Thus, $V(x)>0$ for every $x>x_0$. Furthermore, one has the relation 
\begin{equation}
\label{eq:VsimU}
V(x)\sim U_0(\log_A x)\quad\text{as }x\to\infty.
\end{equation}
Summarizing, for each $x_0\ge x_*$ we have constructed a strictly positive on $(x_0,\infty)$, increasing harmonic function $V(x)$ such that
$V(A^x)\sim U_0(x)$.

We now turn to the case $x_0\le x_*$. Let $V_*$ be the function corresponding to the stopping time $T^{(X)}_{x_*}$, i.e.
$$
V_*(x)=\e_x[V_*(X_1);X_1>x_*], x>x_*.
$$
Define 
\begin{equation}
\label{eq:V.def}
V(x)=V_*(x){\rm 1}\{x>x_*\}
+\sum_{j=0}^\infty\int_{x_0}^{x_*}\pr_x(X_j\in dz, T^{(X)}_{x_0}>j)g(z),
\end{equation}
where
$$
g(z):=\e_z[V_*(X_1);X_1>x_*].
$$
Then one has 
\begin{align*}
&\e_x[V(X_1);X_1>x_0]\\
&\hspace{2mm}=\e_x[V_*(X_1);X_1>x_*]+\int_{x_0}^\infty\pr_x(X_1\in dy)
\sum_{j=0}^\infty\int_{x_0}^{x_*}\pr_y(X_j\in dz, T^{(X)}_{x_0}>j)g(z)\\
&\hspace{2mm}=\e_x[V_*(X_1);X_1>x_*]+
\sum_{j=1}^\infty\int_{x_0}^{x_*}\pr_x(X_j\in dz, T^{(X)}_{x_0}>j)g(z).
\end{align*}
If $x>x_*$ then 
$$
\e_x[V_*(X_1);X_1>x_*]=
V_*(x)=V_*(x)+\int_{x_0}^{x_*}\pr_x(X_0\in dz, T^{(X)}_{x_0}>0)g(z).
$$
Moreover, for $x\in(x_0,x_*]$ we have 
$$
\e_x[V_*(X_1);X_1>x_*]=
\int_{x_0}^{x_*}\pr_x(X_0\in dz, T^{(X)}_{x_0}>0)g(z).
$$
As a result,
$$
\e_x[V(X_1);X_1>x_0]=V(x),\quad x>x_0,
$$
i.e. $V$ is harmonic. 

Since $V(x)$ is strictly positive on the half-line
$(x_0,\infty)$, we may perform the corresponding Doob 
$h$-transform via the transition probabilities:
\begin{equation}
\label{eq:Doob-trans}
\widehat{\pr}^{(V)}_x(X_1\in dy)
=\frac{V(y)}{V(x)}\pr_x(X_1\in dy),
\quad x,y>x_0.
\end{equation}
The chain $X_n$ becomes transient under this new measure.
To see this we consider the sequence $(V(X_n))^{-1/2}$. It is immediate from the definition of $\widehat{\pr}$ that 
\begin{align*}
\widehat{\e}^{(V)}_x\left[\frac{1}{\sqrt{V(X_1)}}\right]
&=\frac{1}{V(x)}
\e_x\left[\frac{V(X_1)}{\sqrt{V(X_1)}}{\rm 1}\{T^{(X)}_{x_0}>1\}\right]\\
&=\frac{1}{V(x)}
\e_x\left[\sqrt{V(X_1)}{\rm 1}\{T^{(X)}_{x_0}>1\}\right].
\end{align*}
Applying now the Jensen inequality, we obtain 
\begin{align*}
\widehat{\e}_x^{(V)}\left[\frac{1}{\sqrt{V(X_1)}}\right]
\le\frac{1}{V(x)}\sqrt{\e_x\left[V(X_1){\rm 1}\{T^{(X)}_{x_0}>1\}\right]}=\frac{1}{\sqrt{V(x)}}.
\end{align*}
In other words, the sequence $(V(X_n))^{-1/2}$ is a positive 
supermartingale. Due to the Doob convergence theorem, this sequence converges almost surely. Noticing that
$\widehat{\pr}_x(\limsup X_n=\infty)=1$ 
implies that this limit of $(V(X_n))^{-1/2}$ is zero. This means that 
$$
X_n\to\infty\quad \widehat{\pr}^{(V)}-\text{a.s.}
$$

We now show that \eqref{eq:VsimU} holds also in the case when the harmonic function is defined by \eqref{eq:V.def}. Since the function $g(z)$ is increasing,
\begin{align*}
&\sum_{j=0}^\infty\int_{x_0}^{x_*}\pr_x(X_j\in dz, T^{(X)}_{x_0}>j)g(z)\\
&\hspace{1cm}\le g(x_*)
\sum_{j=0}^\infty\int_{x_0}^{x_*}\pr_x(X_j\le x_*, T^{(X)}_{x_0}>j)\\
&\hspace{1cm}\le g(x_*)\frac{\sup_{(x_0,x_*}V(x)}{\inf_{(x_0,x_*}V(x)}
\sum_{j=0}^\infty\int_{x_0}^{x_*}\widehat{\pr}_x(X_j\le x_*, T^{(X)}_{x_0}>j)\\
&\hspace{1cm}=C(x_0,x_*)\sum_{j=0}^\infty\int_{x_0}^{x_*}\widehat{\pr}_x(X_j\le x_*, T^{(X)}_{x_0}>j).
\end{align*}
Due to the transience of $\{X_n\}$ under $\widehat{P}$,
$$
\sum_{j=0}^\infty\int_{x_0}^{x_*}
\widehat{\pr}_x(X_j\le x_*, T^{(X)}_{x_0}>j)<\infty.
$$
Therefore,
$$
V(A^x)\sim V_*(A^x)\sim U_0(x).
$$
Thus, the proof of Theorem~\ref{thm:tail_of_T}(i) is complete.
\section{Lower and upper bounds for tails of recurrence times.}
\label{sec:l.u.bounds}
We shall consider the chain $X_n$ only and prove bounds in Theorem~\ref{thm:tail_of_T}(ii). The proofs of corresponding estimates for chains $M_n$ and $R_n$ are simpler.
\subsection{A lower bound for the tail of $T_{x_0}^{(X)}$}
We first consider the case $x_0\ge x_*$. As we have seen in the previous section, $V(x)$ is increasing in this case.

Let $\widehat{\pr}$ denote the Doob $h$-transform of $\pr$, for its definition see \eqref{eq:Doob-trans}.

Define 
$$
\sigma_y:=\inf\{n\ge1:X_n\ge A^y\}.
$$
By the total probability formula, for $x<A^{2n}$ and $B>2$,
\begin{align*}
&\widehat{\pr}_x(X_{\sigma_{2n}}\le A^{Bn},\sigma_{2n}\le n)\\
&=\sum_{k=1}^n\int_{x_0}^{A^{2n}}\widehat{\pr}_x\left(\sigma_{2n}>k-1,X_{k-1}\in dz\right)
\widehat{\pr}_{z}(X_1\in(A^{2n},A^{Bn}])\\
&\ge\inf_{z<A^{2n}}
\frac{\widehat{\pr}_{z}(X_1\in(A^{2n},A^{Bn}])}
{\widehat{\pr}_{z}(X_1>A^{2n})}
\sum_{k=1}^n\int_{x_0}^{A^{2n}}\widehat{\pr}_x\left(\sigma_{2n}>k-1,X_{k-1}\in dz\right)
\widehat{\pr}_{z}(X_1>A^{2n})\\
&=\inf_{z<A^{2n}}
\frac{\widehat{\pr}_{z}(X_1\in(A^{2n},A^{Bn}])}
{\widehat{\pr}_{z}(X_1>A^{2n})}
\widehat{\pr}_x(\sigma_{2n}\le n).
\end{align*}
For every $r\ge2$, using the integration by parts, we get 
\begin{align*}
&\widehat{\pr}_{z}(X_1>A^{rn})\\
&\hspace{0.5cm}=\frac{1}{V(z)}\int_{A^{rn}}^\infty V(y)\pr_z(X_1\in dy)\\
&\hspace{0.5cm}=\frac{1+o(1)}{V(z)}\int_{A^{rn}}^\infty U_0(\log_A y)\pr_z(X_1\in dy)\\
&\hspace{0.5cm}=\frac{1+o(1)}{V(z)}\left(U_0(rn)\pr_z(X_1>A^{rn})
+\int_{A^{rn}}^\infty U_0'(\log_A y)(\log_A y)'\pr_z(X_1>y)dy\right).
\end{align*}
According to \eqref{eq:log.tail.2},
$$
\pr_z(X_1>y)=\pr(\eta_1>\log_A(y-az))\sim\frac{c}{\log_A y}
$$
uniformly in $z\le A^{2n}$, $y\ge A^{2n}$. Therefore,
\begin{align*}
\widehat{\pr}_{z}(X_1>A^{rn})
&=\frac{1+o(1)}{V(z)}\left(U_0(rn)\frac{c}{rn}
+\int_{rn}^\infty e^{-u_0(t)}\pr(\eta_1>t)dt\right)\\
&=\frac{1+o(1)}{V(z)}\left(U_0(rn)\frac{c}{rn}
+e^{-u_0(rn)}\right)
=\frac{1+o(1)}{V(z)} r^{-c}\frac{U_0(n)}{n}.
\end{align*}
This implies that 
$$
\inf_{z<A^{2n}}
\frac{\widehat{\pr}_{z}(X_1\in(A^{2n},A^{Bn}])}
{\widehat{\pr}_{z}(X_1>A^{2n})}
=1-\left(\frac{2}{B}\right)^c+o(1).
$$
Taking $B=2^{1+2/c}$, we conclude that 
$$
\widehat{\pr}_x(X_{\sigma_{2n}}\le A^{2^{1+2/c}n},\sigma_{2n}\le n)
\ge \frac{1}{2}\widehat{\pr}_x(\sigma_{2n}\le n),\quad x\le A^{2n}
$$
for all $n$ large enough. Using this bound, we obtain 
\begin{align*}
&\widehat{\pr}_x(X_n\le A^{Bn})\\
&\hspace{1cm}\ge\widehat{\pr}_x(\sigma_{2n}>n)+
\widehat{\pr}_x(X_n\le A^{Bn},X_{\sigma_{2n}}\le A^{Bn},\sigma_{2n}\le n)\\
&\hspace{1cm}\ge\widehat{\pr}_x(\sigma_{2n}>n)
+\frac{1}{2}\widehat{\pr}(\sigma_{2n}\le n)
\widehat{\pr}_x(X_n\le A^{Bn}|X_{\sigma_{2n}}\le A^{Bn},\sigma_{2n}\le n).
\end{align*}
By the strong Markov property,
$$
\widehat{\pr}_x(X_n\le A^{Bn}|X_{\sigma_{2n}}\le A^{Bn},\sigma_{2n}\le n)
\ge \inf_{z\in (A^{2n},A^{Bn}]}\widehat{\pr}_z(X_j\le X_{j-1}\text{ for all }j\le n).
$$
If $X_0=z\ge A^{2n}$ then $X_j\ge A^{2n-j}$ for every $j\ge1$.
If $A^n\ge x_0$ then $\pr_z(T^{(X)}_{x_0}>n)=1$ and, consequently,
$$
\widehat{\pr}_z(X_j\le X_{j-1}\text{ for all }j\le n)
\ge\frac{V(A^n)}{V(z)}\pr_z(X_j\le X_{j-1}\text{ for all }j\le n)
$$
For every $y$ we have 
$$
\pr_y(X_1\le y)=\pr(\eta_1\le \log_A y+\log_A(1-a)).
$$
Thus, by the Markov property,
\begin{align*}
&\pr_z(X_j\le X_{j-1}\text{ for all }j\le n)\\
&\hspace{1cm}\ge \pr_z(X_j\le X_{j-1}\text{ for all }j\le n-1)
\pr(\eta_1\le n+1+\log_A(1-a))\\
&\hspace{1cm}\ge\ldots\ge 
\prod_{j=0}^{n-1}\pr(\eta_1\le 2n-j+\log_A(1-a))\\
&\hspace{1cm}\ge\left(
\pr(\eta_1\le n+1+\log_A(1-a))\right)^n\sim e^{-c}.
\end{align*}
This implies that 
$$
\inf_{z\in (A^{2n},A^{Bn}]}\widehat{\pr}_z(X_j\le X_{j-1}\text{ for all }j\le n)\ge\frac{V(A^n)}{V(A^{Bn})}\frac{e^{-c}}{2}\ge C_0.
$$
Therefore,
$$
\widehat{\pr}_x(X_n\le A^{Bn})
\ge \widehat{\pr}_x(\sigma_{2n}>n)
+\frac{C_0}{2}\widehat{\pr}_x(\sigma_{2n}\le n)
\ge \frac{C_0}{2}
$$
for all sufficiently large $n$.
Recalling that $V$ is increasing, we obtain the bound
\begin{align*}
\pr_x(T^{(X)}_{x_0}>n)
&=V(x)\widehat{\e}^{(V)}_x\left[\frac{1}{V(X_n)}\right]\\
&\ge \frac{V(x)}{V(A^{Bn})}\widehat{\pr}^{(V)}_x(X_n\le A^{Bn})
\ge \frac{e^{-c}}{4}\frac{V(x)}{V(A^{Bn})},\quad x\le A^{2n}.
\end{align*}
Consequently, 
\begin{equation}
\label{eq:lb}
\pr_x(T^{(X)}_{x_0}>n)\ge C_1\frac{V(x\wedge A^n)}{V(A^n)}
\end{equation}
for all $x>x_0\ge x_*$.

Thus, it remains to consider the case $x_0\le x_*$. If $x>x_*+1$ then 
$\pr_x(T^{(X)}_{x_0}>n)\ge \pr_x(T^{(X)}_{x_*}>n)$. Applying \eqref{eq:lb} with $x_0=x_*$, we get 
$$
\pr_x(T^{(X)}_{x_0}>n)\ge C_1\frac{V_*(x\wedge A^n)}{V_*(A^n)}
\ge C_2 \frac{V(x\wedge A^n)}{V(A^n)}.
$$
If $x\le x_*+1$ then 
$$
\pr_x(T^{(X)}_{x_0}>n)\ge
\pr(\xi_1>x_*+1)\pr_{x_*+1}(T^{(X)}_{x_0}>n-1)\ge 
C_3 \frac{V(x\wedge A^n)}{V(A^n)}.
$$
\subsection{Upper bound for $\pr(T^{(X)}_{x_0}>n)$}
\begin{lemma}
\label{lem:V-monotone}
If $V(x)$ is increasing on $(x_0,\infty)$ then 
$$
\pr_x(T^{(X)}_{x_0}>n)\le C\frac{V(x)}{V(A^n)},\quad x>x_0,\ n\ge 1.
$$
\end{lemma}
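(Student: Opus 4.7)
The plan is to decompose $\pr_x(\tau>n)$, where $\tau:=T^{(X)}_{x_0}$, according to whether the trajectory has reached the level $A^n$ by time $n$, and bound each piece. Extending $V$ by zero on $(-\infty,x_0]$, the harmonicity identity makes $M_k:=V(X_{k\wedge\tau})$ a nonnegative martingale with $\e_x M_k=V(x)$. Since $V$ is increasing, the event $\{\max_{k\le n}X_k\ge A^n,\ \tau>n\}$ is contained in $\{\max_{k\le n}M_k\ge V(A^n)\}$, so Doob's maximal inequality yields
\begin{equation*}
\pr_x\bigl(\tau>n,\ \max_{k\le n}X_k\ge A^n\bigr)\le \frac{V(x)}{V(A^n)}.
\end{equation*}

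For the complementary event, the inequality $X_k=aX_{k-1}+\xi_k\ge\xi_k$ (using $X_{k-1}\ge 0$) gives $\{\max_{k\le n}X_k<A^n\}\subset E_n:=\{\max_{k\le n}\xi_k<A^n\}$. Because $E_n$ involves only the i.i.d.\ innovations, conditionally on $E_n$ the $\xi_k$ are i.i.d.\ from the truncated law $\tilde F(y)=F(y)/F(A^n-)$ on $[0,A^n)$, which satisfies $\tilde F\ge F$. A monotone (quantile) coupling then produces $\tilde\xi_k\le\xi_k$ almost surely, and the linear recursion propagates the ordering to $\tilde X_k\le X_k$ pathwise, so $\tilde\tau\le\tau$. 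Consequently,
\begin{equation*}
\pr_x(\tau>n,\ \max_{k\le n}\xi_k<A^n)=\pr(E_n)\,\pr_x(\tilde\tau>n)\le F(A^n-)^n\,\pr_x(\tau>n).
\end{equation*}

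Adding the two bounds and invoking \eqref{eq:log.tail.2}, $F(A^n-)^n=(1-c/n+o(1/n))^n\to e^{-c}<1$, so $1-F(A^n-)^n\ge(1-e^{-c})/2$ for $n\ge n_0$, and rearranging gives $\pr_x(\tau>n)\le C V(x)/V(A^n)$ for large $n$. The finitely many values $n<n_0$ are absorbed into the constant using the trivial bound $\pr_x(\tau>n)\le 1$ together with the uniform lower estimate $V(x)\ge V(x_0+1)\pr(\xi_1>x_0(1-a)+1)>0$ (which comes from one application of harmonicity, since $X_1\ge x_0+1$ whenever $\xi_1>x_0(1-a)+1$, regardless of $x>x_0$). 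The only delicate step is the stochastic comparison: one must verify that conditioning on $E_n$ genuinely produces the chain driven by $\tilde F$ and that the domination $\tilde F\ge F$ transfers to a pathwise coupling of the two chains; once this is granted, the rest is Doob plus a one-line algebraic absorption.
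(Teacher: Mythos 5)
Your proof is correct and takes a genuinely different route from the paper's. The paper argues via stochastic domination: it asserts that the conditioned law $\pr_x(X_n\in\cdot\mid\tau>n)$ stochastically dominates the unconditioned law $\pr_x(X_n\in\cdot)$, which combined with harmonicity yields $\pr_x(\tau>n)\le V(x)/\e_0[V(X_n);X_n>x_0]$; it then lower-bounds the denominator asymptotically by $V(A^n)$ using the scaling limit \eqref{eq:weak.limit.1} and Fatou's lemma (with an almost-sure realisation of the weak convergence). Your argument instead splits on whether $\max_{k\le n}X_k$ exceeds $A^n$, handles the large-trajectory part with Doob's maximal inequality applied to the stopped martingale $V(X_{k\wedge\tau})$ (extended by zero), and handles the complement with a truncation/quantile-coupling argument giving $\pr_x(\tau>n,\ E_n)\le F(A^n{-})^n\pr_x(\tau>n)$, after which $F(A^n{-})^n\to e^{-c}<1$ lets you absorb the term into the left-hand side. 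Each step checks out: $V(X_{k\wedge\tau})=V(X_k)\mathbf{1}\{\tau>k\}$ is indeed a nonnegative martingale by the extended harmonicity identity; the coupling preserves the chain ordering pathwise through the linear recursion and thus $\tilde\tau\le\tau$; and the uniform lower bound $V(x)\ge V(x_0+1)\pr(\xi_1>x_0(1-a)+1)>0$ is the right device to absorb small $n$. What your approach buys is self-containment and elementarity: you never invoke the scaling limit or conditional stochastic domination, the latter of which the paper states without proof. What the paper's approach buys is brevity, once the weak-convergence machinery is available, and it makes the asymptotic constant more visible. Both give the stated bound, so either is acceptable here.
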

\begin{proof}
Since $\mathbf P_x(X_n>y)$ is monotonically increasing in $x$,
$$
\mathbf P_x(X_n>y\mid T^{(X)}_{x_0}>n) \ge \mathbf P_x(X_n>y) \quad \text{for all }x,y>x_0.
$$
Consequently,
$$
\mathbf E_x[W(X_n)\mid T^{(X)}_{x_0}>n] \ge \mathbf E_x[W(X_n)] \ge \mathbf E_x[W(X_n){\rm 1}\{X_n>x_0\}]
$$
for every nonnegative increasing function $W$.
(To prove this, one approximates $W$ by functions of the form $\sum_kc_k{\rm 1}_{(y_k,\infty)}$.)
In particular, for $W=V$ one gets
$$
V(x)=\mathbf E_x[V(X_n)\mid T^{(X)}_{x_0}>n]
\ge \mathbf E_x[V(X_n){\rm 1}\{X_n>x_0\}]$$
and can conclude
\begin{equation}
\label{eq:upperbound}
\mathbf P_x(T^{(X)}_{x_0}>n) 
\le \frac{V(x)}{\mathbf E_x[V(X_n){\rm 1}\{X_n>x_0\}]} 
\le \frac{V(x)}{\mathbf E_0[V(X_n){\rm 1}\{X_n>x_0\}]} \,.
\end{equation}
As we already know, $\frac{\log_A X_n}n$ converges weakly to the distribution with density 
$$
\frac{cy^{c-1}}{(y+1)^{c+1}}\mathbf1_{\mathbb R^+}(y)\,.
$$
The asymptotic behaviour in \eqref{eq:upperbound} is obtained most conveniently if one assumes that $\frac{\log_A X_n}n$ converges almost everywhere to some $Z$ with this distribution. (On a suitable probability space, the sequence can always be constructed in such a way.)
Then, as $v(x):=V(\log_A x)$ varies regularly with index $1-c$,
$$\frac{V(X_n)}{V(A^n)}
=\frac{v(\log_A X_n)}{v(n)}
=\frac{v\left(\frac{\log_A X_n}nn\right)}{v(n)}
\sim\frac{v(Zn)}{v(n)} \to Z^{1-c} \,.$$
(More precisely, due to the monotonicity of $V$, one first gets for every fixed $N\in\mathbb N$
\begin{align*}
\limsup_{n\to\infty}\frac{v\left(\frac{\log_A X_n}nn\right)}{v(n)}
\le \limsup_{n\to\infty}\frac{v\left(\sup_{k:k\ge N}\frac{\log_A X_k}kn\right)}{v(n)}
=\left(\sup_{k:k\ge N}\frac{\log_AX_k}k\right)^{1-c}\,.
\end{align*}
$N\to\infty$ shows $\limsup_{n\to\infty}\frac{v\left(\log_A X_n\right)}{v(n)}\le Z^{1-c}$ and likewise one checks that the lower limit has at least this value.)

Now one can apply the Fatou lemma:
\begin{equation*}
\begin{split}
\liminf_{n\to\infty}
\frac{\mathbf E_0[V(X_n){\rm 1}\{X_n>x_0\}]}{V(A^n)}
\ge \mathbf E[Z^{1-c}]
=\int_0^\infty y^{1-c}\frac{cy^{c-1}}{(y+1)^{c+1}} = 1 \,,
\end{split}
\end{equation*}
so \eqref{eq:upperbound} yields the desired bound.
\end{proof}
\begin{remark}
We know from the construction of $V$ that this function is increasing for $x_0\ge x_*$. We now notice that, using Theorem~\ref{thm:tail_of_T}(iii),
one can infer that $V$ is increasing for all $x_0$. Indeed, by the monotonicity of the chain $\{X_n\}$, 
$$
\pr_x(T^{(X)}_{x_0}>n)\le \pr_y(T^{(X)}_{x_0}>n)
$$
for all $n$ and for all $x\le y$. Combining this with the asymptotic relation $\pr_x(T^{(X)}_{x_0}>n)\sim\varkappa(c)\frac{V(x)}{V(A^n)}$, we conclude that $V(x)\le V(y)$. Thus, the bound in Lemma~\ref{lem:V-monotone} holds for each $x_0$ and, consequently, the upper bound in Theorem~\ref{thm:tail_of_T}(ii) is valid.
\end{remark}

We next prove an alternative upper bound, which is valid without monotonicity assumption. 
\begin{lemma}
\label{lem:down}
Assume that there exist $x_1$ and a subexponential distribution $F$ such that
$$
\pr_x(T^{(X)}_{x_1}>n)\le C(x)\overline{F}(n),\quad n\ge0,\ x> x_1.
$$
If $x_0<x_1$ is such that $\pr(ax_1+\xi_1<x_0)>0$ then there exists 
$C(x_0,x)$ such that 
$$
\pr_x(T^{(X)}_{x_0}>n)\le C(x,x_0)\overline{F}(n),\quad n\ge0,\ x> x_0.
$$
\end{lemma}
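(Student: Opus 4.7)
Set $p_0:=\pr(ax_1+\xi_1<x_0)>0$. Since $y\mapsto\pr(ay+\xi_1\le x_0)$ is non-increasing, whenever the chain is at some point $y\le x_1$ it has probability at least $p_0$ of jumping below $x_0$ in the next step. The plan is to use this to force a geometric decay in the number of visits the chain can pay to the strip $(x_0,x_1]$ before finally falling below $x_0$, and to combine it with the hypothesis to control each inter-visit time by $\overline F$.

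Concretely, I would define the successive hits of $(-\infty,x_1]$ by $\sigma_0:=T^{(X)}_{x_1}$ and $\sigma_{j+1}:=\inf\{k>\sigma_j:X_k\le x_1\}$, and set $K:=\inf\{j\ge 0:X_{\sigma_j}\le x_0\}$, so that $T^{(X)}_{x_0}=\sigma_K$. The strong Markov property applied successively at the $\sigma_j$, together with the one-step estimate above, then gives
\[
\pr_x(K>j)\le(1-p_0)^j,\qquad j\ge 0.
\]
For $y\in(x_0,x_1]$, conditioning on the first innovation and invoking the hypothesis on the event $\{ay+\xi_1>x_1\}$ yields
\[
\pr_y(T^{(X)}_{x_1}>m)\le c_1\overline F(m-1),\qquad c_1:=\sup_{y\in(x_0,x_1]}\E\bigl[C(ay+\xi_1)\ind\{ay+\xi_1>x_1\}\bigr].
\]
Thus every inter-visit time $\sigma_{j+1}-\sigma_j$, conditionally on $X_{\sigma_j}\in(x_0,x_1]$, has an $\overline F$-type tail; and the first return $\sigma_0$ has the same, by hypothesis if $x>x_1$ and by the same first-innovation argument if $x\in(x_0,x_1]$.

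To conclude, one decomposes
\[
\pr_x(T^{(X)}_{x_0}>n)\le\pr_x(\sigma_0>n)+\sum_{j\ge 1}\pr_x(K\ge j,\ \sigma_j>n)
\]
and, using strong Markov at $\sigma_{j-1}$, bounds each summand by $(1-p_0)^{j-1}c_1\E_x[\overline F(n-\sigma_{j-1}-1);K\ge j]$. Coupling the inter-visit times with iid copies dominated by a distribution $G$ with $\overline G(m)=\min(1,c_1\overline F(m-1))$ and invoking Kesten's inequality $\overline{G^{*j}}(n)\le K_\varepsilon(1+\varepsilon)^j\overline G(n)$ valid for subexponential $G$, one then chooses $\varepsilon$ so small that $(1-p_0)(1+\varepsilon)<1$, which turns the sum into a convergent geometric series and delivers a total bound of the form $C(x,x_0)\overline F(n)$.

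The main technical obstacle will be the finiteness of $c_1(x_0,x_1)$, which requires enough integrability of the hypothesis-constant $C(\cdot)$ against the distribution of $\xi_1$. In the intended applications of this lemma in Section~\ref{sec:subex_for_X} this is inherited from the assumption $\E\eta_1<\infty$ together with the mild logarithmic growth of $C(\cdot)$ supplied by the earlier estimates; in the bare statement of the lemma it must be implicitly supposed.
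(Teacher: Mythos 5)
Your proposal is essentially the same approach as the paper's: force a geometric number of visits to the strip $(x_0,x_1]$ before $T^{(X)}_{x_0}$, bound each inter-visit excursion time by an $\overline F$-type tail via the hypothesis, and handle the resulting geometric compound with a subexponential-convolution estimate. The minor differences are cosmetic. The paper uses the monotonicity of the AR chain in its starting point twice: first to get that, from any $y\in(x_0,x_1]$, the excursion length is stochastically dominated by the one started from $x_1$, and second to get that the probability of landing above $x_0$ at the next return is at most $1-p$ with $p=\pr_{x_1}(X_{T^{(X)}_{x_1}}\le x_0)$. This reduces everything to a single mixture at $x_1$, giving $\pr_{x_1}(T^{(X)}_{x_0}>n)\le p\sum_{k\ge0}(1-p)^k\pr(\zeta_1+\cdots+\zeta_k+\theta>n)$, and then Proposition~4 of \cite{AFK03} finishes. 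You instead keep the dependence on the landing point $y$, take a supremum of $\E[C(ay+\xi_1);ay+\xi_1>x_1]$ over $y$, and close the loop with Kesten's inequality; the latter is of course the engine behind the AFK03 result, so the final step is the same argument in different clothing. Your worry about the finiteness of $c_1$ is legitimate but not specific to your decomposition: the paper's route also needs a one-step bound $\pr_{x_1}(T^{(X)}_{x_1}>n)\le\E[C(ax_1+\xi_1);ax_1+\xi_1>x_1]\overline F(n-1)$ to treat the excursion from $x_1$, since the hypothesis is stated only for $x>x_1$. In the intended application the hypothesis constant comes from $V$, which grows only logarithmically, so the required integrability is supplied by $\E\eta_1<\infty$, exactly as you surmise. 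Had you noticed the monotonicity reduction to the single point $x_1$ your argument would have matched the paper's line-for-line, but as written it is a correct and essentially equivalent proof.
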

\begin{proof}
The assumption $\pr(ax_1+\xi_1<x_0)>0$ implies that 
$$
p:=\pr_{x_1}(X_{T^{(X)}_{x_1}}\le x_0)>0.
$$
Then we can represent the law of $T^{(X)}_{x_1}$ as a mixture of two distributions:
\begin{align*}
&\pr_{x_1}(T^{(X)}_{x_1}\in B)\\
&\hspace{1cm}=p\pr_{x_1}(T^{(X)}_{x_1}\in B|X_{T^{(X)}_{x_1}}\le x_0)
+(1-p)\pr_{x_1}(T^{(X)}_{x_1}\in B|X_{T^{(X)}_{x_1}}> x_0)\\
&\hspace{1cm}=:p\pr(\theta\in B)+(1-p)\pr(\zeta\in B).
\end{align*}
Noting that $\{X_n\}$ may visit $(x_0,x_1]$ several times before
$T^{(X)}_{x_0}$ and using the monotonicity of the chain, we get 
$$
\pr_{x_1}(T^{(X)}_{x_0}>n)
\le p\sum_{k=0}^\infty (1-p)^k\pr(\zeta_1+\zeta_2+\ldots+\zeta_k+\theta>n),
$$
where $\{\zeta_k\}$ are independent copies of $\zeta$.
Under the assumptions of the lemma we have 
$$
\pr(\zeta>n)\le C_1\overline{F}(n)
\quad\text{and}\quad
\pr(\theta>n)\le C_2\overline{F}(n).
$$
Then, by Proposition 4 in \cite{AFK03},
$$
\pr_{x_1}(T^{(X)}_{x_0}>n)\le C\overline{F}(n).
$$
If the starting point $x$ is smaller than $x_1$ then
$$
\pr_{x}(T^{(X)}_{x_0}>n)\le
\pr_{x_1}(T^{(X)}_{x_0}>n)\le C\overline{F}(n).
$$
If the starting point $x$ is bigger than $x_1$ then 
$\pr_{x}(T^{(X)}_{x_0}>n)$ is bounded by the tail of the convolution of 
$\pr_{x}(T^{(X)}_{x_1}\in\cdot)$ and $\pr_{x_1}(T^{(X)}_{x_0}\in\cdot)$.
Since the tails of these two distributions are $O(\overline{F}(n))$, the tail of their convolution is also $O(\overline{F}(n))$. This completes the 
proof of the lemma.
\end{proof}
\begin{corollary}
If \eqref{eq:log.tail.2} holds then 
$$
\pr_x(T^{(X)}_{x_0}>n)\le C\frac{V(x)}{V(A^n)}
$$
for all $x>x_0$ and all $n\ge1$.
\end{corollary}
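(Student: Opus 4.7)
The plan is to split the argument into two cases according to whether the harmonic function $V$ is automatically increasing on $(x_0,\infty)$, and then invoke Lemma~\ref{lem:V-monotone} or Lemma~\ref{lem:down} accordingly.

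\textbf{Case 1: $x_0\ge x_*$.} In this regime, $V$ was produced in Section~\ref{sec:harmonic} as the limit of a supermartingale dominated by the increasing function $U_0(\log_A\cdot)$, and the monotonicity of $U_0$ together with the monotonicity of $\{X_n\}$ makes $V$ increasing on $(x_0,\infty)$. Hence Lemma~\ref{lem:V-monotone} applies directly and yields the stated bound.

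\textbf{Case 2: $x_0<x_*$.} First I would apply Lemma~\ref{lem:V-monotone} at the larger threshold $x_*$ to obtain
\[
\pr_x\bigl(T^{(X)}_{x_*}>n\bigr)\le \frac{C\,V_*(x)}{V_*(A^n)},\quad x>x_*,
\]
where $V_*$ is the harmonic function built for $x_*$. Because $V_*(A^n)$ is regularly varying of index $1-c$, the tail $\overline F(n):=1/V_*(A^n)$ is regularly varying of negative index $c-1\in(-1,0)$, hence corresponds to a subexponential distribution. Lemma~\ref{lem:down}, applied with $x_1=x_*$, then produces
\[
\pr_x\bigl(T^{(X)}_{x_0}>n\bigr)\le \frac{C(x,x_0)}{V_*(A^n)}.
\]
To recast this in the required form $C\,V(x)/V(A^n)$, I would use the asymptotic relation $V(x)\sim V_*(x)\sim U_0(\log_A x)$ established in the proof of Theorem~\ref{thm:tail_of_T}(i), together with the explicit representation \eqref{eq:V.def}, which guarantees that $V$ is bounded above and below on every compact subset of $(x_0,\infty)$. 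Hence the $x$-dependence in $C(x,x_0)$ can be absorbed into a multiple of $V(x)$, while $V_*(A^n)$ and $V(A^n)$ are of the same order as $n\to\infty$.

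The main obstacle I expect is the support condition in Lemma~\ref{lem:down}, namely $\pr(ax_*+\xi_1<x_0)>0$. Since the innovations are positive, this can fail if $x_0\le ax_*$. My fix is to descend in several steps along a finite decreasing sequence $x_*=y_0>y_1>\cdots>y_m=x_0$ chosen so that $\pr(ay_{k-1}+\xi_1<y_k)>0$ at each stage. Such a chain exists because the assumption $\pr(ax_0+\xi_1\le x_0)>0$ places the lower support of $\xi_1$ deep enough; at every iteration the subexponentiality of $\overline F$ is preserved and only the implicit constant changes, so after finitely many applications of Lemma~\ref{lem:down} the bound reaches the threshold $x_0$.
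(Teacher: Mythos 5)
Your proof follows essentially the same route as the paper's: Lemma~\ref{lem:V-monotone} handles $x_0\ge x_*$, and Lemma~\ref{lem:down} with $x_1=x_*$ lowers the threshold when $x_0<x_*$. The paper implements the reduction through the splitting $\pr_x(T^{(X)}_{x_0}>n)\le\pr_x(T^{(X)}_{x_*}>n/2)+\pr_{x_*}(T^{(X)}_{x_0}>n/2)$ for $x>x_*$, applies Lemma~\ref{lem:down} directly for $x\le x_*$, and then converts $V_*$ to $V$ using regular variation of $V_*(A^n)$ together with $V\le V_*+C$; you package the same information inside the constant $C(x,x_0)$ of Lemma~\ref{lem:down}, which is a cosmetic rather than substantive difference. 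What you do add is an explicit check of the hypothesis $\pr(ax_*+\xi_1<x_0)>0$ in Lemma~\ref{lem:down}, which the paper silently assumes; as you note, since $\xi_1>0$ and $x_*$ can exceed $x_0/a$, this hypothesis can genuinely fail, and the iterated descent through intermediate levels $x_*=y_0>y_1>\cdots>y_m=x_0$ is the right repair. One small point worth recording: to close the final step of the descent you need $\pr(\xi_1<(1-a)x_0)>0$ with a strict inequality, whereas the theorem only postulates $\pr(\xi_1\le(1-a)x_0)>0$; however, when these differ the chain started above $x_0$ can never cross below $x_0$ and the statement is vacuous, so nothing is lost.
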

\begin{proof}
It suffices to consider the case $x_0<x_*$.\\
Since $V_*(A^n)$ is regularly varying then, in view of
Lemma~\ref{lem:V-monotone}, the conditions of Lemma~\ref{lem:down} are valid for $x_1=x_*$ and $\overline{F}(n)\sim CU_0(n)$. Combining now Lemmata \ref{lem:V-monotone} and \ref{lem:down}, we have, for $x>x_*$, 
$$
\pr_x(T^{(X)}_{x_0}>n)
\le\pr_x(T^{(X)}_{x_*}>n/2)+\pr_{x_*}(T^{(X)}_{x_0}>n/2)
\le \frac{C_1V_*(x)+C_2}{V_*(A^{n/2})}
$$
Recalling that $V_*(A^n)$ is regularly varying and that 
$V(x)\le V_*(x)+C$ in the case $x_0<x_*$, we have the desired estimate for $x>x_*$. In the case $x\le x_*$ it suffices to apply Lemma~\ref{lem:down}.
\end{proof}
\section{Proof of asymptotic relations}
\label{sec:asym}
In this section we shall prove asymptotic relations in Theorem~\ref{thm:tail_of_T}(iii). Exact asymptotics in Theorem~\ref{thm:Rn}
can be derived by exactly the same arguments, and we omit their proof.

We are going to apply Theorem 3.10 from Durrett~\cite{Durrett78}
to the sequence of Markov processes 
$$
v^{(n)}_t:=\frac{\log_A X_{[nt]}}{n},\quad t\ge0.
$$ 
Since this sequence converges weakly to the process $Z$, which is non-degenerate and $\pr_x(T_0^{(Z)}>t)$ is strictly positive for all $x,t>0$, we conclude that the conditions (i)-(iii) from \cite{Durrett78} are fulfilled. Moreover, we have already shown 
that $\pr_x\left(\cdot\ |T_0^{(Z)}>1\right)$ converges, as
$x\to0$, to a non-degenerate limit. Thus, it remains to check that 
\begin{itemize}
 \item $\pr_{A^{nx_n}}(T^{(X)}_{x_0}>nt_n)\to\pr_x(T_0^{(Z)}>t)$
 if $x_n\to x>0$ and $t_n\to t>0$;
 \item $\pr_{A^{nx_n}}(T^{(X)}_{x_0}>nt_n)\to0$
 whenever $x_n\to0$ and $t_n\to t>0$;
 \item the sequence $v^{(n)}$ is tight; and 
 \item $\lim_{h\to0}\liminf_{n\to\infty}
 \pr_{A^x}(v^{(n)}_t>h|T^{(X)}_{x_0}>n)=1$ for every $t>0$.
\end{itemize}
We start with the first condition. 
\begin{lemma}
\label{lem:fixed_x_t}
If $x_n\to x>0$ and $t_n\to t>0$ then 
$$
\pr_{A^{nx_n}}(T^{(X)}_{x_0}>nt_n)\to\pr_x(T_0^{(Z)}>t).
$$
\end{lemma}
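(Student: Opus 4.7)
The plan is to translate $\{T^{(X)}_{x_0}>nt_n\}$ into a first-passage event for the rescaled process $v^{(n)}_t=\log_A X_{[nt]}/n$ defined just above the statement, to exploit the weak convergence $v^{(n)}\Rightarrow Z$ of \eqref{eq:weak.limit.1}, and then to sandwich $T^{(X)}_{x_0}$ between analogous quantities for the auxiliary chain $M_n$. Write $y_0:=\log_A x_0$ and $t_n':=[nt_n]/n\to t$; then $\pr_{A^{nx_n}}(T^{(X)}_{x_0}>nt_n)=\pr(\inf_{s\le t_n'}v^{(n)}_s>y_0/n)$ with $y_0/n\to0$ and $v^{(n)}_0=x_n\to x$. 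The weak convergence \eqref{eq:weak.limit.1} extends to this family of initial laws by Feller continuity of the $Z$-semigroup, which is read off from the generator $\mathcal{L}$ computed in Section~\ref{sec:Z}.

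For the lower bound, fix $\epsilon\in(0,x)$. Since $Z$ decreases linearly at unit rate between upward jumps, it crosses $\epsilon$ strictly, so the first-passage functional $\tau_\epsilon(\omega):=\inf\{s:\omega(s)\le\epsilon\}$ is $J_1$-continuous at almost every path of $Z$. The continuous-mapping theorem yields $\tau_\epsilon^{(n)}:=\inf\{s:v^{(n)}_s\le\epsilon\}\Rightarrow T_\epsilon^{(Z)}$, and since $y_0/n<\epsilon$ for $n$ large, $\liminf_n\pr_{A^{nx_n}}(T^{(X)}_{x_0}>nt_n)\ge\pr_x(T_\epsilon^{(Z)}>t)$. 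The monotone convergence $T_\epsilon^{(Z)}\nearrow T_0^{(Z)}$ as $\epsilon\downarrow0$ (because $T_\epsilon^{(Z)}=T_0^{(Z)}-\epsilon$ for small $\epsilon$ during the final linear descent of $Z$ to zero) combined with the continuous distribution \eqref{eq:tau_tail} gives $\liminf\ge\pr_x(T_0^{(Z)}>t)$.

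For the upper bound, a direct sandwich by a negative threshold is trivial because $v^{(n)}$ may dip just below $0$ while $Z\ge 0$; the device is to route through $M_n$. The deterministic inequality $M_k\le X_k\le(k+1)M_k$ yields $\{T^{(X)}_{x_0}>nt_n\}\subseteq\{T^{(M)}_{y_n^*}>nt_n\}$ with $y_n^*:=x_0/([nt_n]+1)\to 0$. Using the closed form $M_k=\max_{0\le j\le k}a^{k-j}\xi_j$, the event $\{T^{(M)}_{y_n^*}>nt_n\}$ is exactly the event that the ``lifespans'' $[j,\,j+\eta_j-\log_A y_n^*)$ of the innovations cover $\{1,\dots,[nt_n]\}$. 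After rescaling time and heights by $1/n$, the point process $\{(j/n,\eta_j/n):j\ge1\}$ converges to the Poisson point process on $\mathbb{R}_+\times(0,\infty)$ with intensity $ds\otimes c\,du/u^2$---precisely the PPP driving $Z$ in Buraczewski and Iksanov~\cite{BurIks15}. The threshold perturbation $-\log_A y_n^*/n=O(\log n/n)\to 0$ is negligible compared with the order-one lifespans that determine the covering, so the covering probability converges to $\pr_x(T_0^{(Z)}>t)$, giving $\limsup\le\pr_x(T_0^{(Z)}>t)$ and closing the sandwich.

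The principal technical obstacle is the upper bound for $X$: weak convergence of $v^{(n)}$ alone yields no tight upper bound, because $v^{(n)}$ can dip just below zero while the limit $Z$ cannot. The $M$-chain's maximum representation is what resolves this, by converting the first-passage question into a covering question for a Poisson point process---a functional which is stable under the $o(1)$ threshold perturbation introduced by the inequality $X_k\le(k+1)M_k$.
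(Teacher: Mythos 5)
Your lower bound (weak convergence plus continuous mapping for $\tau_\epsilon$, then $T^{(Z)}_\epsilon\nearrow T^{(Z)}_0$) is sound in spirit and close to the paper's treatment, which reads off the $\liminf$ from the Portmanteau/Durrett framework applied to the open sets $G_\epsilon=\{f:\inf_{s\le t}f(s)>\epsilon\}\uparrow A_0$. The parenthetical claim $T^{(Z)}_\epsilon=T^{(Z)}_0-\epsilon$ is not literally correct because the final linear descent begins at a random, path-dependent time, but the monotone convergence $T^{(Z)}_\epsilon\uparrow T^{(Z)}_0$ still holds by the path regularity of $Z$, so this is cosmetic.

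The genuine gap is in your upper bound. You reduce to $\{T^{(M)}_{y_n^*}>nt_n\}$ via $X_k\le(k+1)M_k$ and then assert that the covering probability for the rescaled lifespans converges to $\pr_x(T^{(Z)}_0>t)$ because the threshold perturbation $-\log_A y_n^*/n=O(\log n/n)\to 0$ "is negligible." That assertion is precisely where the hard work is, and it is not justified. Writing $A(s):=\sup\{s_i+u_i:s_i\le s,\ (s_i,u_i)\in\Pi\}$ for the limiting PPP $\Pi$ (with intensity $ds\otimes c\,du/u^2$), the $\delta$-extended covering of $(0,t]$ is $\{\forall s\in(0,t]:A(s)>s-\delta\}$, which decreases to $\{\forall s\in(0,t]:A(s)\ge s\}$ as $\delta\downarrow 0$, whereas the target event $\{T^{(Z)}_0>t\}$ is $\{\forall s\in(0,t]:A(s)>s\}$; the difference is the event that $A(\cdot)-\mathrm{id}$ touches zero somewhere in $(0,t]$ without going negative. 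You would need to show this event has probability zero, i.e.\ that the Shepp-type covering probability is continuous in the perturbation, and that is a non-trivial fact about the $c/u^2$ covering process that you neither prove nor cite. Moreover, proving $\pr_{nx_n}(T^{(R)}_{\ell_n}>nt_n)\to\pr_x(T^{(Z)}_0>t)$ for $\ell_n/n\to 0$ is exactly the statement of this lemma for the chain $R$; you have not made the problem easier, only transferred it. The paper closes precisely this gap with the quantitative estimate $\pr_{A^{\varepsilon n}}(T^{(X)}_{x_0}>\delta n)\le C\,V(A^{\varepsilon n})/V(A^{\delta n})\le C'(\varepsilon/\delta)^{1-c}$ from Theorem~\ref{thm:tail_of_T}(ii), which controls the contribution of paths that dip to level $\varepsilon$ and then linger; without this bound or an equivalent substitute, the $\limsup$ cannot be matched to $\pr_x(T^{(Z)}_0>t)$.
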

\begin{proof}
Since $\pr_{y}(T^{(X)}_{x_0}>m)$ is increasing in $y$ and decreasing in $m$, it suffices to prove the lemma in the special case 
$x_n=x$ and $t_n=t$. We are going to apply Theorem 2.1 from \cite{Durrett78}. We set
$$
A_0:=\left\{f\in D[0,t]:\inf_{s\le t}f(s)>0\right\}
$$
and
$$
A_n:=\left\{f\in D[0,t]:\inf_{s\le t}f(s)>\frac{x_0}{n}\right\},
\quad n\ge1.
$$
Furthermore, for every $\varepsilon>0$ we define
$$
G_\varepsilon
:=\left\{f\in D[0,t]:\inf_{s\le t}f(s)>\varepsilon\right\}.
$$
Then we have 
\begin{equation}
\label{eq:prob_boundary}
\pr_x(Z\in\partial G_\varepsilon)=0
\quad\text{for all }\varepsilon,t>0.
\end{equation}
It is clear that 
$G_\varepsilon\subset A_n$ for all $n>x_0/\varepsilon$ and 
$G_{1/n}\uparrow A_0$.
Thus, in order to apply Theorem 2.1 from \cite{Durrett78} we have only to show that 
\begin{equation}
\label{eq:ls}
\limsup_{n\to\infty}\pr_{A^{xn}}(T^{(X)}_{x_0}>nt)
\le\pr_x(T_0^{(Z)}>t).
\end{equation}
Fix some $\varepsilon<x$ and $\delta<t$.
Then, using the monotonicity of the chain $\{X_n\}$, we get
$$
\pr_{A^{xn}}(T^{(X)}_{x_0}>nt)\le
\pr_{A^{xn}}\left(\inf_{s\le t-\delta} X_{[sn]}>A^{\varepsilon n}\right)
+\pr_{A^{\varepsilon n}}(T^{(X)}_{x_0}>\delta n).
$$
According to the upper bound in \eqref{eq:tail_bounds},
$$
\pr_{A^{\varepsilon n}}(T^{(X)}_{x_0}>\delta n)
\le C\frac{V(A^{\varepsilon n})}{V(A^{\delta n})}.
$$
Recalling that $V(A^{x})$ is regularly varying of index $1-c$,
we conclude
$$
\limsup_{n\to\infty}\pr_{A^{\varepsilon n}}(T^{(X)}_{x_0}>\delta n)
\le C\left(\frac{\varepsilon}{\delta}\right)^{1-c}.
$$
Furthermore, combining \eqref{eq:weak.limit.1} and \eqref{eq:prob_boundary}, we get
$$
\pr_{A^{xn}}\left(\inf_{s\le t-\delta} X_{[sn]}>A^{\varepsilon n}\right)\to\pr_x\left(\inf_{s\le t-\delta}Z_s>\varepsilon\right).
$$
Consequently,
\begin{align*}
\limsup_{n\to\infty}\pr_{A^{xn}}(T^{(X)}_{x_0}>nt)
&\le \pr_x\left(\inf_{s\le t-\delta}Z_s>\varepsilon\right)
+C\left(\frac{\varepsilon}{\delta}\right)^{1-c}\\
&\le\pr_x(T_0^{(Z)}>t-\delta)
+C\left(\frac{\varepsilon}{\delta}\right)^{1-c}.
\end{align*}
Letting here first $\varepsilon\to0$ and then $\delta\to0$, we arrive at \eqref{eq:ls}. Thus, the proof is complete.
\end{proof}
\begin{lemma}
\label{lem:zero_start}
If $t_n\to t>0$ and $x_n\to0$ then 
$$
\pr_{A^{x_nn}}(T^{(X)}_{x_0}>nt)\to0.
$$
\end{lemma}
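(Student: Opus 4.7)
The plan is to deduce the lemma directly from the upper bound of Theorem~\ref{thm:tail_of_T}(ii), combined with the fact that $V(A^{\cdot})$ is regularly varying of strictly positive index $1-c$.

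First, I would invoke the bound
$$
\pr_{A^{x_n n}}(T^{(X)}_{x_0}>n t) \le C\,\frac{V(A^{x_n n})}{V(A^{n t})},
$$
which is valid as soon as $A^{x_n n}>x_0$. Using $V(A^y)\sim U_0(y)$ from Theorem~\ref{thm:tail_of_T}(i) and writing $U_0(y)=y^{1-c}L(y)$ for some slowly varying $L$, I would apply Potter's bounds to conclude that for any $\varepsilon\in(0,1-c)$ and all large $n$,
$$
\frac{V(A^{x_n n})}{V(A^{n t})}\sim\frac{U_0(x_n n)}{U_0(n t)}\le C_\varepsilon\left(\frac{x_n}{t}\right)^{1-c-\varepsilon}\longrightarrow 0,
$$
since $x_n\to 0$ and the exponent $1-c-\varepsilon$ is strictly positive.

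Two boundary situations remain. If $x_n n$ stays bounded along a subsequence, then $V(A^{x_n n})$ is bounded while $V(A^{n t})\to\infty$, so the ratio still vanishes along that subsequence. If $A^{x_n n}\le x_0$ for infinitely many $n$, I would use monotonicity of $\{X_n\}$ in the starting position to bound $\pr_{A^{x_n n}}(T^{(X)}_{x_0}>nt)\le\pr_{y_0}(T^{(X)}_{x_0}>nt)$ for any fixed $y_0>x_0$, and then apply the upper bound to this fixed starting point, obtaining a bound of order $V(y_0)/V(A^{nt})\to 0$.

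No genuine obstacle is expected: once the sharp upper bound from part (ii) is in hand, the claim reduces to the elementary observation that a regularly varying function of positive index $1-c$ evaluated at $x_n n$ is negligible compared with its value at $n t$ whenever $x_n/t\to 0$. The only mildly delicate point, namely that the estimate in (ii) is stated for starting points strictly above $x_0$, is circumvented by monotonicity in the starting position, as described above.
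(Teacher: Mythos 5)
Your argument is correct and is exactly what the paper has in mind: the text states that the lemma "is a simple consequence of the upper bound in \eqref{eq:tail_bounds}" and omits the details, and you have supplied precisely those details (the ratio $V(A^{x_nn})/V(A^{nt})$ vanishes by regular variation of $U_0\sim V(A^{\cdot})$ with positive index, plus routine handling of the boundary cases via monotonicity in the starting point). No gaps.
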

This is a simple consequence of the upper bound in \eqref{eq:tail_bounds} and we omit its proof.
\begin{lemma}
\label{lem: repulsion}
For all $x>x_0$ and all $t>0$ one has
$$
\lim_{h\to0}\limsup_{n\to\infty}
\pr_x(v^{(n)}_t>h|T^{(X)}_{x_0}>n)=1.
$$
\end{lemma}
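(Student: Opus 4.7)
The plan is to control the complementary conditional probability $\pr_x(v^{(n)}_t \le h \mid T^{(X)}_{x_0} > n)$ and show that it tends to $0$ as $h \to 0$. Setting $m := [nt]$, the Markov property at time $m$ yields the decomposition
\begin{equation*}
\pr_x(v^{(n)}_t \le h,\ T^{(X)}_{x_0} > n)
= \e_x\!\left[\pr_{X_m}(T^{(X)}_{x_0} > n-m);\ X_m \le A^{hn},\ T^{(X)}_{x_0} > m\right].
\end{equation*}
On the event $\{X_m \le A^{hn}\}$, I intend to combine the upper bound in Theorem~\ref{thm:tail_of_T}(ii) with the growth estimate $V(y) \le C\,U_0(\log_A y)$ derived in Section~\ref{sec:harmonic} to conclude that $\pr_{X_m}(T^{(X)}_{x_0} > n-m) \le C\,U_0(hn)/U_0(n-m)$.

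Next, I will apply the upper bound a second time to obtain $\pr_x(T^{(X)}_{x_0} > m) \le C\,V(x)/V(A^m)$ and invoke the lower bound of Theorem~\ref{thm:tail_of_T}(ii) in the denominator, $\pr_x(T^{(X)}_{x_0} > n) \ge c\,V(x)/V(A^n)$. Using the asymptotic $V(A^k) \sim U_0(k)$, these combine to yield
\begin{equation*}
\pr_x(v^{(n)}_t \le h \mid T^{(X)}_{x_0} > n)
\le C'\,\frac{U_0(hn)\,U_0(n)}{U_0(n-m)\,U_0(m)}.
\end{equation*}

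Since $U_0$ is regularly varying of index $1-c$ with $c \in (0,1)$, the right-hand side converges, as $n \to \infty$, to $(h/(t(1-t)))^{1-c}$, and this quantity tends to $0$ as $h \to 0$. Consequently $\lim_{h \to 0}\liminf_{n \to \infty}\pr_x(v^{(n)}_t > h \mid T^{(X)}_{x_0} > n) = 1$, which a fortiori yields the stated limsup form since conditional probabilities never exceed $1$. The only minor obstacle is that the harmonic function $V$ need not be monotone when $x_0 < x_*$, which would prevent a direct replacement of $V(X_m)$ by $V(A^{hn})$; the comparison with the manifestly increasing $U_0 \circ \log_A$ circumvents this issue, and no new ingredients beyond the two-sided bounds of Theorem~\ref{thm:tail_of_T}(ii) and the regular variation of $U_0$ are needed.
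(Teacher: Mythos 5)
Your argument, cutting at the endpoint $m=[nt]$ and applying the two-sided bounds of Theorem~\ref{thm:tail_of_T}(ii) twice together with the estimate $V(y)\le C\,U_0(\log_A y)$, is a genuinely different and correct route for $0<t<1$. It bypasses the unconditioned weak convergence \eqref{eq:weak.limit.1} that the paper invokes, relying only on regular variation of $U_0$; and your remark that the increasing majorant $U_0\circ\log_A$ substitutes for the possibly non-monotone $V$ is sound.

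However, the lemma is stated (and is required as one of Durrett's hypotheses) for every $t>0$, in particular for $t=1$, and there your proof has a real gap. With $m=[nt]$ and $t=1$ the remaining horizon $n-m$ stays bounded, so $U_0(n-m)$ does not grow, the asymptotic $V(A^{n-m})\sim U_0(n-m)$ is no longer available, and the claimed limit $(h/(t(1-t)))^{1-c}$ is meaningless. Nor can you repair this by simply moving the cut to $m=[ns]$ with some fixed $s<1$: from $X_n\ge a^{n-m}X_m$, the event $\{X_n\le A^{hn}\}$ only forces $X_m\le A^{(n-m)+hn}$, which makes the inner factor of order $U_0\bigl((1-s)n+hn\bigr)/U_0\bigl((1-s)n\bigr)\to (1+h/(1-s))^{1-c}$, and the resulting upper bound converges to a nonzero constant as $h\to0$. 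Controlling the probability that $X$ is small at a late time requires a separate ingredient. The paper cuts at $s=\min\{1,t\}/2$, decouples via the monotonicity of the chain to obtain the bound $\pr_x(T^{(X)}_{x_0}>ns)\,\pr_0(X_{[n(t-s)]}\le A^{hn})$, and then controls the second factor by the unconditional convergence to $Z$ and \eqref{eq:trans.prob}, which yields $(h/(h+t-s))^c\to0$. For $0<t<1$ both routes work; to cover $t\ge1$ some version of this additional input appears unavoidable.
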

\begin{proof}
By the definition of $v^{(n)}$,
$$
\pr_x(v^{(n)}_t\le h|T^{(X)}_{x_0}>n)
=\frac{\pr_x(X_{[nt]}\le A^{hn},T^{(X)}_{x_0}>n)}{\pr_x(T^{(X)}_{x_0}>n)}.
$$
Set $s=\min\{1,t\}/2$. Then, by the monotonicity of $X_n$,
$$
\pr_x(X_{[nt]}\le A^{hn},T^{(X)}_{x_0}>n)
\le \pr_x(T^{(X)}_{x_0}>ns)\pr_0(X_{[n(t-s)]}\le A^{hn}).
$$
Therefore,
$$
\pr_x(v^{(n)}_t>h|T^{(X)}_{x_0}>n)
\le\frac{\pr_x(T^{(X)}_{x_0}>ns)}{\pr_x(T^{(X)}_{x_0}>n)}
\pr_0(X_{[n(t-s)]}\le A^{hn}).
$$
Taking into account \eqref{eq:weak.limit.1}, \eqref{eq:tail_bounds} and \eqref{eq:trans.prob}, we get 
$$
\limsup_{n\to\infty}\pr_x(v^{(n)}_t>h|T^{(X)}_{x_0}>n)
\le Cs^{c-1}\pr_0(Z_{t-s}\le h)
\le Cs^{c-1}\left(\frac{h}{h+t-s}\right)^c.
$$
This yields the desired relation.
\end{proof}
To show the tightness we shall use the following upper bound for
the conditional distribution of $X_n$.
\begin{lemma}
\label{lem:cond_dist}
There exists a constant $C$ such that 
$$
\pr_x(X_n\ge A^{y}|T^{(X)}_{x_0}>n)\le C\frac{n}{y},
\quad y\ge 2\log_A\left(x+\frac{1}{1-a}\right).
$$
\end{lemma}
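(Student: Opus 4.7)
The plan is to decompose the event $\{X_n \ge A^y,\, T^{(X)}_{x_0} > n\}$ according to the first passage time of the level $A^y$,
$$
\sigma_y := \inf\{k \ge 1 : X_k \ge A^y\}.
$$
The hypothesis $y \ge 2\log_A(x + 1/(1-a))$ implies $A^y \ge (x+1/(1-a))^2 > x$, so $\sigma_y \ge 1$ almost surely, and on the event of interest one has $\sigma_y \le n$. Consequently
$$
\pr_x\!\bigl(X_n \ge A^y,\, T^{(X)}_{x_0} > n\bigr)
\;\le\; \sum_{k=1}^{n} \pr_x\!\bigl(\sigma_y = k,\, T^{(X)}_{x_0} > k-1\bigr).
$$

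Next I would bound each summand by a big-jump estimate. On $\{\sigma_y = k\}$ one has $X_{k-1} < A^y$, so $X_k = aX_{k-1} + \xi_k \ge A^y$ forces $\xi_k \ge A^y - aX_{k-1} \ge (1-a)A^y$, i.e.\ $\eta_k \ge y + \log_A(1-a)$. Applying the Markov property at time $k-1$ together with \eqref{eq:log.tail.2} yields
$$
\pr_x\!\bigl(\sigma_y = k,\, T^{(X)}_{x_0} > k-1\bigr)
\;\le\; \pr_x\!\bigl(T^{(X)}_{x_0} > k-1\bigr)\,\pr\!\bigl(\eta_1 \ge y + \log_A(1-a)\bigr)
\;\le\; \frac{C}{y}\,\pr_x\!\bigl(T^{(X)}_{x_0} > k-1\bigr),
$$
with $C$ uniform over the prescribed range of $y$ (for small $y$ one simply uses $\pr \le 1 \le C/y$). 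Summing over $k$ and invoking the upper bound $\pr_x(T^{(X)}_{x_0} > k) \le CV(x)/V(A^k)$ from Theorem~\ref{thm:tail_of_T}(ii), together with Karamata's theorem applied to $V(A^k)$ (regularly varying in $k$ of index $1-c \in (0,1)$), which gives
$$
\sum_{k=1}^{n}\frac{1}{V(A^k)}\;\asymp\;\frac{n}{V(A^n)},
$$
I would obtain
$$
\pr_x\!\bigl(X_n \ge A^y,\, T^{(X)}_{x_0} > n\bigr)
\;\le\; \frac{Cn}{y}\cdot\frac{V(x)}{V(A^n)}.
$$

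Dividing by the matching lower bound $\pr_x(T^{(X)}_{x_0} > n) \ge cV(x\wedge A^n)/V(A^n)$ from (ii) finishes the proof in the case $x \le A^n$. For $x > A^n$ the hypothesis forces $y \ge 2\log_A x > 2n$, and one replaces Karamata by the trivial estimates $\sum_{k=0}^{n-1}\pr_x(T^{(X)}_{x_0}>k)\le n$ and $\pr_x(T^{(X)}_{x_0}>n)\ge c>0$. The main delicate point is the exact cancellation of slowly varying factors between the Karamata sum $\sum 1/V(A^k)\asymp n/V(A^n)$ and the matching lower estimate $cV(x)/V(A^n)$; this is what produces the linear scaling $n/y$, rather than the weaker $(n/y)^{1-c}$ one would obtain from a direct Markov inequality with the harmonic function $V$. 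Reassuringly, the $n/y$ scaling is exactly what \eqref{eq:cond.lim.Z} predicts at the scaling limit, where the conditional tail $\pr_x(Z_t>y\mid T_0^{(Z)}>t)=t/(y+t)$ decays linearly.
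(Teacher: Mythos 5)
Your argument is correct and follows essentially the same route as the paper: isolate a single large innovation responsible for $X_n\ge A^y$, decouple it with the Markov property, bound $\sum_{k\le n}\pr_x(T^{(X)}_{x_0}>k-1)$ by $CnV(x)/V(A^n)$ using part (ii) and Karamata's theorem, and divide by the matching lower bound from (ii). The only cosmetic difference is in how the big jump is detected: the paper notes that if every $\xi_k<A^{y/2}$ then $X_n\le x+A^{y/2}/(1-a)\le A^y$, and applies a union bound over which $\xi_k$ exceeds $A^{y/2}$, while you decompose by the first passage time $\sigma_y$ and observe that $\xi_{\sigma_y}\ge(1-a)A^y$; both thresholds yield the same $O(1/y)$ factor via \eqref{eq:log.tail.2}, so the two variants are interchangeable.
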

\begin{proof}
If $\xi_k< A^{y/2}$ for all $k\le n$ then
\begin{align*}
X_n&=a^nx+a^{n-1}\xi_1+a^{n-2}\xi_2+\ldots+\xi_n\\
&\le x+A^{y/2}\sum_{j=0}^{n-1}a^j
\le x+A^{y/2}\frac{1}{1-a}
\le A^y
\end{align*}
for all $y\ge 2\log_A\left(x+\frac{1}{1-a}\right)$.
Therefore,
\begin{align*}
\pr_x(X_n\ge A^y,T^{(X)}_{x_0}>n)
&\le\sum_{k=1}^n\pr_x(\xi_k\ge A^{y/2},T^{(X)}_{x_0}>n)\\
&\le\sum_{k=1}^n\pr_x(\xi_k\ge A^{y/2},T^{(X)}_{x_0}>k-1)\\
&\le\pr(\xi_1\ge A^{y/2})\sum_{k=1}^{n}\pr_x(T^{(X)}_{x_0}>k-1) 
\end{align*}
Using the upper bound in \eqref{eq:tail_bounds} and recalling that $V(A^x)$ is regularly varying with index $1-c$, we conclude that 
$$
\sum_{k=1}^{n}\pr_x(T^{(X)}_{x_0}>k-1)
\le 1+C\sum_{j=1}^{n-1}\frac{V(x)}{V(A^j)}
\le C\frac{nV(x)}{V(A^n)}.
$$
Consequently,
$$
\pr_x(X_n\ge A^y,T^{(X)}_{x_0}>n)
\le C \frac{nV(x)}{V(A^n)}\pr(\eta_1\ge y/2).
$$
Combining this with \eqref{eq:log.tail.2} and with the lower bound in \eqref{eq:tail_bounds}, we obtain the desired estimate.
\end{proof}
\begin{lemma}
\label{lem:tightness}
The sequence $v^{(n)}$ is tight.
\end{lemma}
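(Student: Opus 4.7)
My proposal. The quickest route is to invoke the established weak convergence $v^{(n)}\Rightarrow Z$ in $D[0,\infty)$ from Buraczewski and Iksanov, recorded as \eqref{eq:weak.limit.1}, which automatically entails tightness of the sequence on every $D[0,T]$. If one wishes to avoid appealing to this general fact (or needs uniformity in the starting point beyond what \eqref{eq:weak.limit.1} directly supplies), the monotonicity of the chain $\{X_n\}$ in its initial condition, which has been used repeatedly in earlier sections, allows one to sandwich $v^{(n)}$ started at $A^{nx_n}$ between copies with simpler initial data and transfer tightness.

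A self-contained direct proof splits naturally into compact containment and a modulus-of-continuity bound. For the former, iterate the recursion $X_n=aX_{n-1}+\xi_n$ to obtain
\[
X_k\le X_0+\frac{1}{1-a}\max_{1\le j\le k}\xi_j,\qquad k\ge1,
\]
which yields $v^{(n)}_t\le\frac{\log_A X_0}{n}+\frac{1}{n}\max_{j\le nT}\eta_j+O(1/n)$. By \eqref{eq:log.tail.2},
\[
\pr\bigl(\max_{j\le nT}\eta_j>Kn\bigr)\le nT\,\pr(\eta_1>Kn)\sim cT/K,
\]
uniformly in $n$, which tends to $0$ as $K\to\infty$ and provides the required compact containment of $\sup_{t\le T}v^{(n)}_t$.

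For the modulus I would exploit the one-sided Lipschitz bound $X_{k+1}\ge aX_k$, equivalently $\log_A X_{k+1}\ge\log_A X_k-1$. Consequently $v^{(n)}_t\ge v^{(n)}_s-(t-s)-O(1/n)$ for $s\le t$, so downward displacements are controlled linearly by the time increment. Upward displacements are of size $O(1/n)$ unless a \emph{macroscopic} innovation, one with $\eta_{k+1}\ge\varepsilon n$, occurs. By the estimate above, the number of macroscopic innovations in $[0,nT]$ is a Binomial random variable with mean $\sim cT/\varepsilon$ and is therefore finite with high probability. Partitioning $[0,T]$ at the macroscopic jump epochs and using the Skorohod $J_1$ modulus $w'(\cdot,\delta,T)$, which allows an arbitrary value at the partition points, then yields the tightness bound.

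The one delicate point I anticipate is showing that macroscopic jumps do not cluster on the scale $\delta$, which is what lets one place partition points of $w'$ between them. This follows from a second-moment-type estimate: the probability that two innovations exceeding $\varepsilon n$ lie within $\delta n$ consecutive indices of each other is at most $(nT)\cdot (\delta n)\cdot\pr(\eta_1>\varepsilon n)^2=O(\delta/\varepsilon^2)$, which vanishes with $\delta$. Equivalently, the time-rescaled point process of macroscopic jump times is asymptotically Poisson, a fact already implicit in the convergence $v^{(n)}\Rightarrow Z$ recorded in \eqref{eq:weak.limit.1}.
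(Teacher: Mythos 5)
The lemma you are asked to prove is not the unconditional tightness of the laws $\pr_x(v^{(n)}\in\cdot)$, but the tightness of the \emph{conditional} laws $\pr_x\bigl(v^{(n)}\in\cdot\,\big|\,T^{(X)}_{x_0}>n\bigr)$ on $D[0,1]$ -- this is the hypothesis of Durrett's Theorem 3.10 that the surrounding text is verifying, and the paper's own proof establishes exactly the two conditional bounds \eqref{eq:tight.1} and \eqref{eq:tight.2} via Durrett's Theorem 3.6. Your proposal never engages with the conditioning. The first paragraph correctly notes that $v^{(n)}\Rightarrow Z$ from \eqref{eq:weak.limit.1} yields unconditional tightness for free, and indeed there is nothing to prove at that level; the whole point of the lemma is that one conditions on the event $\{T^{(X)}_{x_0}>n\}$, whose probability decays like a regularly varying function of index $c-1<0$. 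Conditioning on a vanishing-probability event can destroy tightness: the conditional measure could a priori concentrate on paths that shoot off to $+\infty$. Neither your compact-containment bound $\pr(\max_{j\le nT}\eta_j>Kn)\lesssim cT/K$ nor your modulus-of-continuity argument controls the relevant \emph{conditional} probabilities, since the events you estimate could in principle be strongly positively correlated with $\{T^{(X)}_{x_0}>n\}$.

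What makes the conditional statements work in the paper is precisely the machinery developed earlier in Sections 3 and 4, which your outline does not use. The conditional compact containment \eqref{eq:tight.1} comes from Lemma~\ref{lem:cond_dist}, which gives $\pr_x(X_n\ge A^y\mid T^{(X)}_{x_0}>n)\le Cn/y$; its proof decomposes $X_n$ according to the last big innovation, uses $\pr_x(T^{(X)}_{x_0}>k)$ and the regular variation of $V(A^\cdot)$ to bound $\sum_{k\le n}\pr_x(T^{(X)}_{x_0}>k-1)\le Cn V(x)/V(A^n)$, and then divides by the \emph{lower} bound $\pr_x(T^{(X)}_{x_0}>n)\ge C^{-1}V(x)/V(A^n)$ from \eqref{eq:tail_bounds}. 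The small-time bound \eqref{eq:tight.2} likewise hinges on the ratio estimate $\pr_x(T^{(X)}_{x_0}>nt)/\pr_x(T^{(X)}_{x_0}>n)\le Ct^{c-1}$, again a consequence of \eqref{eq:tail_bounds}. In other words, the two-sided bounds on $\pr_x(T^{(X)}_{x_0}>n)$ from Theorem~\ref{thm:tail_of_T}(ii) are the indispensable ingredient, and an argument that does not use them cannot close the gap between the unconditional tightness (which is trivial here) and the conditional tightness that the lemma actually asserts.
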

\begin{proof}
According to Theorem 3.6 in \cite{Durrett78}, it suffices show that
\begin{equation}
\label{eq:tight.1}
\lim_{K\to\infty}\limsup_{n\to\infty}
\pr_x(X_n>A^{nK}|T^{(X)}_{x_0}>n)=0
\end{equation}
and
\begin{equation}
\label{eq:tight.2}
\lim_{t\to0}\limsup_{n\to\infty}
\pr_x(X_{[nt]}>A^{nh}|T^{(X)}_{x_0}>n)=0,
\quad h>0.
\end{equation}
\eqref{eq:tight.1} is immediate from Lemma~\ref{lem:cond_dist}.
To show \eqref{eq:tight.2} we first notice that, for every $t<1$, 
$$
\pr_x(X_{[nt]}>A^{nh}|T^{(X)}_{x_0}>n)
\le \pr_x(X_{[nt]}>A^{nh}|T^{(X)}_{x_0}>nt)
\frac{\pr_x(T^{(X)}_{x_0}>nt)}{\pr_x(T^{(X)}_{x_0}>n)}.
$$
Applying Lemma~\ref{lem:cond_dist} to the first probability term on the right hand side, we get 
$$
\pr_x(X_{[nt]}>A^{nh}|T^{(X)}_{x_0}>n)
\le C\frac{t}{h}
\frac{\pr_x(T^{(X)}_{x_0}>nt)}{\pr_x(T^{(X)}_{x_0}>n)}.
$$
Using again \eqref{eq:tail_bounds}, we have 
$$
\limsup_{n\to\infty}\frac{\pr_x(T^{(X)}_{x_0}>nt)}{\pr_x(T^{(X)}_{x_0}>n)}
\le Ct^{c-1}.
$$
As a result we have the estimate
$$
\limsup_{n\to\infty}\pr_x(X_{[nt]}>A^{nh}|T^{(X)}_{x_0}>n)
\le C\frac{t^c}{h},
$$
which implies \eqref{eq:tight.2}.
\end{proof}

We have checked all the conditions in Theorem 3.10 in \cite{Durrett78}. Therefore, the sequence of distributions 
$\pr_x\left(v^{(n)}\in\cdot|T^{(X)}_{x_0}>n\right)$ on $D[0,1]$ converges weakly towards the distribution $Q$ introduced in Theorem~\ref{thm:Z}.

Therefore, it remains to prove \eqref{eq:tail_asymp}. 
Since $V$ is harmonic,
\begin{align}
\label{eq:tail.1a}
\nonumber
V(x)
&=\e_x[V(X_n);T^{(X)}_{x_0}>n]\\
&=\e_x[V(X_n);T^{(X)}_{x_0}>n, X_n\le A^{Kn}]
+\e_x[V(X_n);T^{(X)}_{x_0}>n, X_n> A^{Kn}]
\end{align}
for every $K>0.$

We know that $V(x)\le CU_0(\log_A x)$. Therefore, 
\begin{align*}
&\e_x[V(X_n);T^{(X)}_{x_0}>n, X_n> A^{Kn}]\\
&\le C\e_x[U_0(\log_A X_n);T^{(X)}_{x_0}>n, X_n> A^{Kn}]\\
&=CU_0(Kn)\pr_x(\log_A X_n\ge Kn;T^{(X)}_{x_0}>n)\\
&\hspace{2cm}
+C\int_{Kn}^\infty U_0'(y)\pr_x(\log_A X_n\ge y;T^{(X)}_{x_0}>n)dy.
\end{align*}
Combining Lemma~\ref{lem:cond_dist} and \eqref{eq:tail_bounds},
we have
$$
\pr_x(\log_A X_n\ge y;T^{(X)}_{x_0}>n)\le C\frac{nV(x)}{yV(A^n)}.
$$
Consequently,
\begin{align*}
&\e_x[V(X_n);T^{(X)}_{x_0}>n, X_n> A^{Kn}]\\
&\hspace{1cm}\le C\frac{V(x)}{V(A^n)}\left(\frac{U_0(Kn)}{K}
+n\int_{Kn}^\infty \frac{U_0'(y)}{y}dy\right)\\
&\hspace{1cm}\le C\frac{V(x)}{V(A^n)}\left(\frac{U_0(Kn)}{K}
+n\int_{Kn}^\infty \frac{U_0(y)}{y^2}dy\right)
\le C\frac{V(x)}{V(A^n)}\frac{U_0(Kn)}{K}.
\end{align*}
Recalling that $V(A^n)\sim U_0(n)$ and that $U_0$ is regularly varying, we finally get 
\begin{equation}
\label{eq:tail.2a}
\limsup_{n\to\infty}\e_x[V(X_n);T^{(X)}_{x_0}>n, X_n> A^{Kn}]
\le \frac{C}{K^c}V(x).
\end{equation}
For the first summand on the right hand side of \eqref{eq:tail.1} we have 
\begin{align*}
&\e_x[V(X_n);T^{(X)}_{x_0}>n, X_n\le A^{Kn}]\\
&\hspace{1cm}=\pr_x(T^{(X)}_{x_0}>n)
\e_x[V(X_n){\rm 1}\{X_n\le A^{Kn}\}|T^{(X)}_{x_0}>n]\\
&\hspace{1cm}=V(A^n)\pr_x(T^{(X)}_{x_0}>n)
\e_x\left[\frac{V(X_n)}{V(A^n)}{\rm 1}\{X_n\le A^{Kn}\}
\Big|T^{(X)}_{x_0}>n\right].
\end{align*}
It follows from the already proven conditional limit theorem and from \eqref{eq:cond.lim.Z} that 
$$
\lim_{n\to\infty}
\pr_x\left(\frac{\log_A X_n}{n}\le y\Big|T^{(X)}_{x_0}>n\right)
=\frac{y}{y+1},\quad y>0.
$$
Combining this with the regular variation property of $V$, we obtain
\begin{align*}
&\e_x\left[\frac{V(X_n)}{V(A^n)}{\rm 1}\{X_n\le A^{Kn}\}
\Big|T^{(X)}_{x_0}>n\right]\\
&\hspace{1cm}=(1+o(1))
\e_x\left[\left(\frac{\log_A X_n}{n}\right)^{1-c}
{\rm 1}\{X_n\le A^{Kn}\}\Big|T^{(X)}_{x_0}>n\right]\\
&\hspace{1cm}=(1+o(1))
\int_0^K \frac{y^{c-1}}{(1+y)^2}dy.
\end{align*}
Consequently,
\begin{align}
\label{eq:tail.3a}
\nonumber
&\e_x[V(X_n);T^{(X)}_{x_0}>n, X_n\le A^{Kn}]\\
&\hspace{1cm}=(1+o(1))V(A^n)\pr_x(T^{(X)}_{x_0}>n)
\int_0^K \frac{y^{c-1}}{(1+y)^2}dy.
\end{align}
Plugging \eqref{eq:tail.2a} and \eqref{eq:tail.3a} into \eqref{eq:tail.1a} and letting $K\to\infty$, we obtain 
$$
\pr_x(T^{(X)}_{x_0}>n)
\sim\left(\int_0^\infty \frac{y^{c-1}}{(1+y)^2}dy\right)^{-1}
\frac{V(x)}{V(A^n)}.
$$
Thus, \eqref{eq:tail_asymp} holds with 
$$
\varkappa(c)
=\left(\int_0^\infty \frac{y^{c-1}}{(1+y)^2}dy\right)^{-1}
=\frac{1}{(1-c)B(c,1-c)}.
$$

\section{Proof of Theorem~\ref{thm:subex_for_R}}
\label{sec:subex_for_R}
\subsection{Expectation of hitting times for the maximal autoregressive process}
Put $u(x)=\E_x[T^{(R)}_{x_0}]$ and observe that the Markov property implies 
that it should satisfy 
\begin{align}
    \nonumber
u(x)&=
\pr_x(T^{(R)}_{x_0}=1)+
\e_x[1+u(R_1);T^{(R)}_{x_0}>1]\\
\label{eq:mean.hitting}
&=\pr_x(R_1\le x_0)+
\e_x[1+u(R_1);R_1>x_0],\quad x>x_0.
\end{align}

Assume first that $x\in(x_0,x_0+1]$. In this case one has
$$
\{R_1>x_0\}=\{R_1=\eta_1>x_0\}.
$$
Therefore,
\begin{align*}
u(x)&=
\pr(\eta_1\le x_0)+
\e[1+u(\eta_1);\eta_1>x_0]\\
&=1+ 
\e[u(\eta_1);\eta_1>x_0]\quad\text{for all }x\in(x_0,x_0+1].
\end{align*}

For all $x>x_0+1$ one has $\pr_x(T_{x_0}^{(R)}=1)=0$. 
This implies that \eqref{eq:mean.hitting} reduces to 
\begin{align}
\nonumber
u(x)&=\e_x[1+u(R_1)]\\
\label{eq:mean.hitting.2}
&=1+u(x-1)\pr(\eta_1\le x-1)+\e[u(\eta_1);\eta_1>x-1],
\quad x>x_0+1. 
\end{align}
If $x\in(x_0+1,x_0+2]$ then $x-1\in(x_0,x_0+1]$ and, consequently,
$u(x-1)=u(x_0+1)$ for all $x\in(x_0+1,x_0+2]$. From this observation
and from \eqref{eq:mean.hitting.2} we have
\begin{align}
\label{eq:x0+2.hitting}
\nonumber
u(x)
&=1+u(x_0+1)\pr(\eta_1\le x-1)+\e[u(\eta_1);\eta_1>x-1]\\
\nonumber
&=1+u(x_0+1)\pr(\eta_1\le x-1)\\
\nonumber
&\hspace{1cm}+\e[u(\eta_1);\eta_1\in(x-1,x_0+1]]
+\e[u(\eta_1);\eta_1>x_0+1]\\
&=1+u(x_0+1)\pr(\eta_1\le x_0+1)+\e[u(\eta_1);\eta_1>x_0+1].
\end{align}
This equality implies that $u(x)=u(x_0+2)$ for all $x\in(x_0+1,x_0+2]$.
Note also that 
\begin{align*}
u(x_0+1)&=1+\e[u(\eta_1);\eta_1>x_0]\\
&=1+u(x_0+1)\pr(\eta_1\in(x_0,x_0+1]))+\e[u(\eta_1);\eta_1>x_0+1].
\end{align*}
Combining this with \eqref{eq:x0+2.hitting}, we conclude that
$$
u(x_0+2)=u(x_0+1)\left(1+\pr(\eta_1\le x_0)\right).
$$

Fix now an integer $n$ and consider the case $x\in(x_0+n,x_0+n+1]$.
Assume that we have already shown that $u(y)=u(x_0+n)$ for all
$y\in(x_0+n-1,x_0+n]$. Then we have from \eqref{eq:mean.hitting.2}
\begin{align*}
u(x)&=1+u(x_0+n)\pr(\eta_1\le x-1)+\e[u(\eta_1);\eta_1>x-1]\\
&=1+u(x_0+n)\pr(\eta_1\le x_0+n)+\e[u(\eta_1);\eta_1>x_0+n].
\end{align*}
Therefore, $u(x)=u(x_0+n+1)$ for all $x\in(x_0+n,x_0+n+1]$. 
This means that this property is valid for all $n$.

One has also equalities
$$
u(x_0+n+1)=1+u(x_0+n)\pr(\eta_1\le x_0+n)+\e[u(\eta_1);\eta_1>x_0+n]
$$
and
\begin{align*}
u(x_0+n)&=1+u(x_0+n-1)\pr(\eta_1\le x_0+n-1)+\e[u(\eta_1);\eta_1>x_0+n-1]\\
&=1+u(x_0+n-1)\pr(\eta_1\le x_0+n-1)\\
&\quad+u(x_0+n)\pr(\eta_1\in(x_0+n-1,x_0+n])
+\e[u(\eta_1);\eta_1>x_0+n].
\end{align*}
Taking the difference we obtain
\begin{align*}
&u(x_0+n+1)-u(x_0+n)\\
&\quad=u(x_0+n)\pr(\eta_1\le x_0+n)-u(x_0+n-1)\pr(\eta_1\le x_0+n-1)\\
&\hspace{2cm}-u(x_0+n)\pr(\eta_1\in(x_0+n-1,x_0+n])\\
&\quad=\pr(\eta_1\le x_0+n-1)\left(u(x_0+n)-u(x_0+n-1)\right).
\end{align*}
Consequently,
$$
u(x_0+n+1)-u(x_0+n)=
u(x_0+1)\prod_{k=0}^{n-1}\pr(\eta_1\le x_0+k),\quad n\ge 1.
$$
As a result, we have the following expression for the expectation of the 
 hitting time 
\begin{equation}
\label{eq:mean.hitting.3}
u(x)=
u(x_0+1)\left(1+\sum_{j=1}^{n}\prod_{k=0}^{j-1}\pr(\eta_1\le x_0+k)\right),\ x\in(x_0+n,x_0+n+1].
\end{equation}
Finally, in order to get a finite solution we have to show that
the equation 
$$
u(x_0+1)=1+\e[u(\eta_1);\eta_1>x_0]
$$
is solvable. In view of \eqref{eq:mean.hitting.3}, 
the previous equation is equivalent to
$$
u(x_0+1)=1+u(x_0+1)\sum_{n=0}^\infty 
\left(1+\sum_{j=1}^{n}\prod_{k=0}^{j-1}\pr(\eta_1\le x_0+k)\right)
\pr(\eta_1\in(x_0+n,x_0+n+1]).
$$
Now we conclude that \eqref{eq:mean.hitting} 
has a finite solution if and only if
$$
\sum_{n=0}^\infty 
\left(1+\sum_{j=1}^{n}\prod_{k=0}^{j-1}\pr(\eta_1\le x_0+k)\right)
\pr(\eta_1\in(x_0+n,x_0+n+1])<1.
$$
Clearly,
\begin{align*}
&\sum_{n=0}^\infty 
\left(1+\sum_{j=1}^{n}\prod_{k=0}^{j-1}\pr(\eta_1\le x_0+k)\right)
\pr(\eta_1\in(x_0+n,x_0+n+1])\\
&=\pr(\eta_1>x_0)+\sum_{j=1}^\infty\prod_{k=0}^{j-1}\pr(\eta_1\le x_0+k)
\sum_{n=j}^\infty \pr(\eta_1\in(x_0+n,x_0+n+1])\\
&=\pr(\eta_1>x_0)+\sum_{j=1}^\infty
\left(1-\pr(\eta_1\le x_0+j)\right)\prod_{k=0}^{j-1}\pr(\eta_1\le x_0+k).
\end{align*}
Furthermore, for every $N\ge 1$,
\begin{align*}
&\sum_{j=1}^N
\left(1-\pr(\eta_1\le x_0+j)\right)\prod_{k=0}^{j-1}\pr(\eta_1\le x_0+k)\\
&\hspace{1cm}=\sum_{j=1}^N\prod_{k=0}^{j-1}\pr(\eta_1\le x_0+k)
-\sum_{j=1}^N\prod_{k=0}^{j}\pr(\eta_1\le x_0+k)\\
&\hspace{1cm}=\pr(\eta_1\le x_0)-\prod_{k=0}^{N}\pr(\eta_1\le x_0+k).
\end{align*}
This implies that 
\begin{align*}
&\sum_{n=0}^\infty 
\left(1+\sum_{j=1}^{n}\prod_{k=0}^{j-1}\pr(\eta_1\le x_0+k)\right)
\pr(\eta_1\in(x_0+n,x_0+n+1])\\
&\hspace{1cm}=1-\lim_{N\to\infty}\prod_{k=0}^{N}\pr(\eta_1\le x_0+k).
\end{align*}
Thus, there is a finite solution $u(x)$ if and only if 
$$
\lim_{N\to\infty}\prod_{k=0}^{N}\pr(\eta_1\le x_0+k)>0.
$$
Note that this is equivalent to $\e\eta_1^+<\infty$. 
Then, 
\begin{align*}
u(x_0+1)= \frac{1}{\prod_{k=0}^{\infty}\pr(\eta_1\le x_0+k)}. 
\end{align*}

\subsection{Recursion for tails of exit times}
We will consider now $\pr_x(T^{(R)}_{x_0}>n)$. 
Define
$$
v(n,k) = \pr_{x_0+k+1}(T^{(R)}_{x_0}>n),\quad n,k\ge 0
$$
and 
$$
v_n=v(n,0).
$$
Then the following result holds.  
\begin{proposition}\label{prop:tail.recursion}
Assume that $0<\pr(\eta_1<x_0)<1$. 
Then, for integer $n,k\ge 0$, 
\begin{equation}\label{eq:tail.0}
\pr_x(T^{(R)}_{x_0}>n) = v(n,k), \quad x\in (x_0+k,x_0+k+1]. 
\end{equation}
For $n\le k$ we have $v(n,k)=1$ and for $n> k$ the following 
recursive equality holds 
\begin{equation}\label{eq:tail.9}
    v(n,k) 
    = v(n,0)+\sum_{m=1}^{k} v(n-m,0)\prod_{j=0}^{m-1} \pr(\eta_1\le x_0+j).
\end{equation}
Furthermore,
\begin{align}\label{eq:tail.8}
    v_n&=\pr(\eta_1>x_0+n-1)+ 
    v_{n-1} \pr(\eta_1\in (x_0,x_0+n-1]) \\
    \nonumber 
    &+\sum_{m=1}^{n-2} v_{n-m-1}\pr(\eta_1\in (x_0+m,x_0+n-1])
    \prod_{j=0}^{m-1} 
        \pr(\eta_1\le x_0+j)
\end{align}
and hence~\eqref{eq:tail.9} and~\eqref{eq:tail.8} allow us to find 
$\pr_x(T^{(R)}_{x_0}>n)$ recursively. 
\end{proposition}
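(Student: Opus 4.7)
The plan is to prove the four assertions of the proposition in sequence. First, for \eqref{eq:tail.0} I would argue by induction on $n$ that $\pr_x(T^{(R)}_{x_0}>n)$ depends on $x$ only through the integer $k$ determined by $x\in(x_0+k,x_0+k+1]$. The base case $n=0$ is immediate since $T^{(R)}_{x_0}\ge 1$. For the inductive step, observe that $R_1=\max\{x-1,\eta_1\}$ and that for $x\in(x_0+k,x_0+k+1]$ one has $x-1\in(x_0+k-1,x_0+k]$; hence the distribution of $\lceil R_1-x_0\rceil$ depends on $x$ only through $k$, and the Markov property together with the induction hypothesis closes the step. The assertion $v(n,k)=1$ for $n\le k$ is simply the deterministic lower bound $R_n\ge R_0-n>x_0$, valid on every trajectory.

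To prove \eqref{eq:tail.9} for $n>k$ I would first record the one-step Markov decomposition obtained from the previous step. For $k\ge 1$, starting in $(x_0+k,x_0+k+1]$ gives
\begin{align*}
v(n,k)=\pr(\eta_1\le x_0+k)\,v(n-1,k-1)+\sum_{j\ge k}\pr(\eta_1\in(x_0+j,x_0+j+1])\,v(n-1,j),
\end{align*}
and for $k=0$ the analogous identity with summation starting at $j=0$. Subtracting the $(k-1)$-decomposition from the $k$-decomposition the common tail sum cancels, and one obtains the backward difference equation
\[
v(n,k)-v(n,k-1)=\pr(\eta_1\le x_0+k-1)\bigl[v(n-1,k-1)-v(n-1,k-2)\bigr],\quad k\ge 2,
\]
together with the initial identity $v(n,1)-v(n,0)=\pr(\eta_1\le x_0)\,v(n-1,0)$ obtained directly from the $k=1$ and $k=0$ decompositions. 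Iterating the difference equation $k-1$ times along the diagonal $(n,k)\mapsto(n-1,k-1)$ and applying the initial identity at the bottom of the iteration yields
\[
v(n,k)-v(n,k-1)=v(n-k,0)\prod_{j=0}^{k-1}\pr(\eta_1\le x_0+j),
\]
and telescoping in $k$ produces \eqref{eq:tail.9}.

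Finally, to obtain \eqref{eq:tail.8} I would start from the $k=0$ Markov decomposition $v_n=\sum_{j\ge 0}\pr(\eta_1\in(x_0+j,x_0+j+1])\,v(n-1,j)$ and split the sum at $j=n-1$. Terms with $j\ge n-1$ contribute $\pr(\eta_1>x_0+n-1)$ since $v(n-1,j)=1$ in that range by the second step. For $0\le j\le n-2$ I substitute \eqref{eq:tail.9} for $v(n-1,j)$, swap the order of summation, and use the telescoping identity
\[
\sum_{j=m}^{n-2}\pr(\eta_1\in(x_0+j,x_0+j+1])=\pr(\eta_1\in(x_0+m,x_0+n-1])
\]
to recover the three groups of terms on the right of \eqref{eq:tail.8}. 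I expect the main technical obstacle to be the first step: verifying rigorously that the conditional tail is constant on each half-open interval $(x_0+k,x_0+k+1]$ requires careful tracking of how $\lceil\cdot-x_0\rceil$ interacts with the dynamics $R_1=\max\{x-1,\eta_1\}$, particularly at the integer endpoints. Once this lattice structure is established, the remaining steps are algebraic bookkeeping built on the Markov one-step decomposition.
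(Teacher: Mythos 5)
Your proof is correct and follows essentially the same route as the paper: a one-step Markov decomposition over the value of $\lceil R_1-x_0\rceil$, subtraction of consecutive decompositions to obtain the backward difference equation, iteration along the diagonal, and telescoping in $k$. The only minor difference is presentational: you isolate \eqref{eq:tail.0} as a clean induction on $n$ using the observation that the law of $\lceil R_1-x_0\rceil$ under $\pr_x$ depends on $x$ only through $k$, whereas the paper proves \eqref{eq:tail.0} and the difference relation \eqref{eq:tail.4a} together by induction on $k$.
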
    
\begin{proof}
It is clear that for $n\le k$ it holds $\pr_x(T^{(R)}_{x_0}>n)=v(n,k)=1$. 
Hence, in the rest of the proof we will  assume  that $n>k$. 

Let $x\in (x_0,x_0+1]$. Then,  for $n>0$ we have,
\begin{align}\label{eq:tail.1}
\nonumber
\pr_x(T^{(R)}_{x_0}>n)
&=\int_{x_0}^\infty \pr_x(R_1\in dy)\pr_y(T^{(R)}_{x_0}>n-1)\\
&= \int_{x_0}^\infty \pr(\eta_1\in dy)\pr_y(T^{(R)}_{x_0}>n-1). 
\end{align}    
Clearly this probability is the same for each $x\in (x_0,x_0+1]$ and 
hence~\eqref{eq:tail.0}  holds for $k=0$. 

Next consider $x\in (x_0+1,x_0+2]$. 
For every $n>1$ we have 
\begin{align}
    \nonumber
    &\pr_x(T^{(R)}_{x_0}>n)\\
    \nonumber
    &=\pr_{x-1}(T^{(R)}_{x_0}>n-1)\pr(\eta_1 \le x-1 )
    +\int_{x-1}^\infty\pr(\eta_1\in dy)\pr_y(T^{(R)}_{x_0}>n-1)\\
    \nonumber
    &=\pr_{x_0+1}(T^{(R)}_{x_0}>n-1)\pr(\eta_1 \le x-1 )
    +\int_{x-1}^{x_0+1}\pr(\eta_1\in dy)\pr_{x_0+1} (T^{(R)}_{x_0}>n-1 )
    \\
    \nonumber 
    &\hspace{1cm}    
    +\int_{x_0+1}^\infty\pr(\eta_1\in dy)\pr_y(T^{(R)}_{x_0}>n-1)\\ 
   \label{eq:tail.3}  
    &= v(n-1,0)\pr(\eta_1\le x_0+1)+\int_{x_0+1}^\infty 
    \pr(\eta_1\in dy)\pr_y(T^{(R)}_{x_0}>n-1). 
\end{align}
This expression is constant for $x\in (x_0+1,x_0+2]$
 and hence~\eqref{eq:tail.0} holds for $k=1$.  
Note also that it follows from~\eqref{eq:tail.1} that 
\[
v(n,0)=v(n-1,0)\pr(\eta_1\in (x_0,x_0+1]) 
+\int_{x_0+1}^\infty 
    \pr(\eta_1\in dy)\pr_y(T^{(R)}_{x_0}>n-1).
\]
Subtracting this expression from~\eqref{eq:tail.3} we obtain 
\begin{equation}\label{eq:tail.4}
    v(n,1)-v(n,0) = v(n-1,0)\pr(\eta_1\le x_0). 
\end{equation}    

We will now prove by induction that 
for $x\in (x_0+k,x_0+k]$ 
the tail $\pr_x(T^{(R)}_{x_0}>n)$ is constant and  will simultaneously show that for $k\ge 2$ that 
\begin{equation}\label{eq:tail.4a}
    v(n,k)-v(n,k-1) = (v(n-1,k-1)-v(n-1,k-2))\pr(\eta_1\le x_0+k-1). 
\end{equation}    

First consider the base of induction $k=2$. 
In this case, for $n>2$ 
and for $x\in (x_0+2,x_0+3]$, we have  
\begin{align*}
    &\pr_x(T^{(R)}_{x_0}>n)\\
    &=\pr_{x-1}(T^{(R)}_{x_0}>n-1)\pr(\eta_1\le x-1 ) +\int_{x-1}^\infty 
    \pr(\eta_1\in dy)\pr_y(T^{(R)}_{x_0}>n-1)\\
    &=\pr_{x_0+2}(T^{(R)}_{x_0}>n-1)+\int_{x-1}^{x_0+2}\pr(\eta_1\in dy)\pr_{x_0+2} (T^{(R)}_{x_0}>n-1)\\
    &\hspace{1cm}+\int_{x_0+2}^\infty 
    \pr(\eta_1\in dy)\pr_y(T^{(R)}_{x_0}>n-1)\\    
    &= v(n-1,1)\pr(\eta_1\le x_0+2)+\int_{x_0+2}^\infty 
    \pr(\eta_1\in dy)\pr_y(T^{(R)}_{x_0}>n-1). 
\end{align*}
This expression clearly does not depend on $x$. Thus, 
\begin{equation}
    \label{eq:tail.5}  
v(n,2)= v(n-1,1)\pr(\eta_1\le x_0+2)+\int_{x_0+2}^\infty 
\pr(\eta_1\in dy)\pr_y(T^{(R)}_{x_0}>n-1). 
\end{equation}
It also follows from~\eqref{eq:tail.3} that 
\begin{align*}
v(n,1)&=v(n-1,0)\pr(\eta_1\le x_0+1)    
+v(n-1,1)\pr(\eta_1\in (x_0+1,x_0+2])\\
&\hspace{1cm}+\int_{x_0+2}^\infty 
\pr(\eta_1\in dy)\pr_y(T^{(R)}_{x_0}>n-1).
\end{align*}
Subtracting this equation from~\eqref{eq:tail.5} we obtain 
\[
v(n,2)-v(n,1)=   
(v(n-1,1)-v(n-1,0)) 
\pr(\eta_1\le x_0+1).      
\]
This is exactly~\eqref{eq:tail.4a} with $k=2$. 
Thus, the base case is true. 

We will now prove the induction step. 
Consider $x\in (x_0+k,x_0+k+1]$. 
For $n>k$ we obtain, using the induction hypothesis, 
\begin{align*}
    &\pr_x(T^{(R)}_{x_0}>n)\\
    &=\pr_{x-1}(T^{(R)}_{x_0}>n-1)\pr(\eta_1\le x-1 )
    +\int_{x-1}^\infty \pr(\eta_1\in dy)\pr_y(T^{(R)}_{x_0}>n-1)\\
    &=\pr_{x_0+k}(T^{(R)}_{x_0}>n-1)+\int_{x-1}^{x_0+k}\pr(\eta_1\in dy)\pr_{x_0+k} (T^{(R)}_{x_0}>n-1)\\
    &\hspace{1cm}+\int_{x_0+k}^\infty 
    \pr(\eta_1\in dy)\pr_y(T^{(R)}_{x_0}>n-1)\\    
    &= v(n-1,k-1)\pr(\eta_1\le x_0+k)+\int_{x_0+k}^\infty 
    \pr(\eta_1\in dy)\pr_y(T^{(R)}_{x_0}>n-1). 
\end{align*}
This expression clearly does not depend on $x$ and hence~\eqref{eq:tail.0} holds. Thus, 
\begin{equation}
    \label{eq:tail.6}  
v(n,k)= v(n-1,k-1)\pr(\eta_1\le x_0+k)+\int_{x_0+k}^\infty 
\pr(\eta_1\in dy)\pr_y(T^{(R)}_{x_0}>n-1). 
\end{equation}
The same expression is true for $k-1$ by the induction hypothesis. 
Hence,
\begin{align*}
    v(n,k-1)&=v(n-1,k-2)\pr(\eta_1\le x_0+k-1)\\    
    &\hspace{1cm}+v(n-1,k-1)\pr(\eta_1\in (x_0+k-1,x_0+k])\\ 
    &\hspace{1cm}+\int_{x_0+k}^\infty 
    \pr(\eta_1\in dy)\pr_y(T^{(R)}_{x_0}>n-1).
\end{align*}
Subtracting this expression from~\eqref{eq:tail.6} we obtain~\eqref{eq:tail.4a}.  

Now it follows from~\eqref{eq:tail.4} and~\eqref{eq:tail.4a}  that 
\begin{equation}\label{eq:tail.7}
v(n,k)-v(n,k-1) = v(n-k,0) \prod_{j=0}^{k-1} \pr(\eta_1\le x_0+j).
\end{equation}
for $n>k$. Then the standard telescoping argument gives~\eqref{eq:tail.9}. 
Plugging~\eqref{eq:tail.7} into~\eqref{eq:tail.1} we obtain 
\begin{align*}
    v_n &=\int_{x_0+1}^\infty  \pr(\eta_1\in dy) \pr_y(T^{(R)}_{x_0}>n-1)
    +v_{n-1}\pr(\eta_1\in(x_0,x_0+1]) 
    \\
    &=\sum_{l=1}^{n-2} \pr(\eta_1\in (x_0+l,x_0+l+1])v(n-1,l)+\pr(\eta_1>x_0+n-1)\\
    &\hspace{1cm}+v_{n-1}\pr(\eta_1\in(x_0,x_0+1])  
    \\
    &=\sum_{l=1}^{n-2} \pr(\eta_1\in (x_0+l,x_0+l+1]) 
    \left(
        v_{n-1}+\sum_{m=1}^{l} v_{n-m}\prod_{j=0}^{m-1} 
        \pr(\eta_1\le x_0+j)
    \right)\\
    &\hspace{1cm} +\pr(\eta_1>x_0+n-1)+v_{n-1}\pr(\eta_1\in(x_0,x_0+1]). 
\end{align*}
Swapping the order of summation we obtain~\eqref{eq:tail.8}. 

\end{proof} 

\subsection{Heavy tails}
To analyse the heavy-tailed 
case we need first the following 
definition. 
We say that a non-negative sequence $(a_n)_{n\ge 0}$ is 
subexponential if 
\begin{align*}
    \lim_{n\to\infty}\frac{a_n}{a_{n-1}}=1, \quad 
    a_\infty:=
    \sum_{n=0}^\infty a_n<\infty\\
    \sum_{k=0}^n a_k a_{n-k}
    \sim 2 a_\infty a_n,\quad n\to\infty. 
\end{align*}
We start by deriving an upper bound for $v_n$. 
\begin{lemma}\label{lem:tail.upper}
Assume that $F\in \mathcal S^*$, where $F(x)=\pr(\eta_1\le x)$. 
Then there exists a constant $C$ such that 
\[
v_n\le C\pr(\eta_1>n),\quad n\ge 0.       
\]
\end{lemma}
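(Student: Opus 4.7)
My plan is to convert the recursion~\eqref{eq:tail.8} into a defective renewal inequality and then close a bound of the form $v_n\le C\overline F(n)$ by induction, using the subexponential convolution estimates provided by $F\in\mathcal S^*$.

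\emph{Step 1: Reformulation.} Set $p_m:=\prod_{j=0}^{m-1}\pr(\eta_1\le x_0+j)$ with $p_0=1$, and absorb the $v_{n-1}\pr(\eta_1\in(x_0,x_0+n-1])$ term in~\eqref{eq:tail.8} as the $m=0$ contribution in the sum. Using the telescoping identity $p_m\overline F(x_0+m)=p_m-p_{m+1}$, the recursion rewrites as
\begin{equation*}
v_n=\overline F(x_0+n-1)\Bigl(1-\sum_{m=0}^{n-2}v_{n-m-1}p_m\Bigr)+\sum_{m=0}^{n-2}v_{n-m-1}(p_m-p_{m+1}),
\end{equation*}
and dropping the (nonnegative, $\le 1$) first bracket yields
\begin{equation}\label{eq:plan.defect}
v_n\le\overline F(x_0+n-1)+\sum_{m=0}^{n-2}v_{n-m-1}\,q_m,\qquad q_m:=p_m-p_{m+1}.
\end{equation}
Since $\e\eta_1<\infty$, the analysis of Section~\ref{sec:harmonic} (or the computation at the end of Section~6.1) shows that $p_\infty:=\lim_{N\to\infty}p_N>0$, so $(q_m)$ is a sub-probability measure with total mass $\sum_m q_m=1-p_\infty<1$. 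Moreover $q_m\le\overline F(x_0+m)$, so the measure $q$ has the same tail decay as $\overline F$ up to a shift.

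\emph{Step 2: Induction.} I would show $v_n\le C\overline F(n)$ by induction on $n$; after finitely many base cases, insert the inductive hypothesis into~\eqref{eq:plan.defect} to obtain
\begin{equation*}
v_n\le \overline F(x_0+n-1)+C\sum_{m=0}^{n-2}\overline F(n-m-1)\,q_m.
\end{equation*}
Split the sum at a large constant $K$. For $m\le K$, the long-tail property of $F\in\mathcal S\supset\mathcal S^*$ gives $\overline F(n-m-1)\le(1+\varepsilon)\overline F(n)$ for all $n$ large, contributing at most $(1+\varepsilon)(1-p_\infty)\,C\,\overline F(n)$. For $K<m\le n-2$, bound $q_m\le\overline F(x_0+m)$ and apply the $\mathcal S^*$ convolution estimate (Definition~\ref{sstar}) to get
\begin{equation*}
\sum_{m=K+1}^{n-2}\overline F(n-m-1)\overline F(x_0+m)\le\varepsilon'\,\overline F(n),
\end{equation*}
for $K$ sufficiently large and $n$ sufficiently large; the $x_0$-shift is harmless since long-tailedness gives $\overline F(x_0+m)\sim\overline F(m)$. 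Combined with $\overline F(x_0+n-1)\le(1+o(1))\overline F(n)$, this yields
\begin{equation*}
v_n\le\bigl(1+o(1)\bigr)\overline F(n)+C\bigl((1+\varepsilon)(1-p_\infty)+\varepsilon'\bigr)\overline F(n).
\end{equation*}
Choose $\varepsilon,\varepsilon'$ so small that $\rho:=(1+\varepsilon)(1-p_\infty)+\varepsilon'<1$, then pick $C$ with $C\rho+2\le C$; the induction is self-sustaining for all sufficiently large $n$, and enlarging $C$ absorbs the finitely many remaining $n$.

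\emph{Main obstacle.} The genuinely nontrivial ingredient is the middle-range convolution bound $\sum_{K<m\le n-K}\overline F(n-m-1)\overline F(x_0+m)=o(\overline F(n))$ (uniformly in $n$ as $K\to\infty$), which is precisely the substance of the class $\mathcal S^*$: the asymptotic $\int_0^x\overline F(x-y)\overline F(y)\,dy\sim 2\mu_+\overline F(x)$ is equivalent to saying that the bulk of the convolution concentrates near the two endpoints, so its middle part is negligible at scale $\overline F(x)$. Without this property, one would only obtain $\sum\overline F(n-m)\overline F(m)=O(\overline F(n))$ with a large constant, which is too weak to close the induction because it would force $C\rho$ to exceed $C$. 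The remaining manipulations—the long-tail shift from $\overline F(x_0+m)$ to $\overline F(m)$ and the telescoping identity $p_m\overline F(x_0+m)=p_m-p_{m+1}$—are routine once this key estimate is available.
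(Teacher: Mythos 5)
Your Step~1 is correct and is in fact exactly the reduction the paper makes: the inequality~\eqref{eq:plan.defect} is precisely $v_n\le w_n$ with $w_n=c_n+\sum_{m=0}^{n-2}w_{n-m-1}d_m$ (your $q_m$ is the paper's $d_m$), and the paper then handles this defective renewal recursion via generating functions and the results of~\cite{AFK03} on locally subexponential sequences, while you try to close it by a direct induction.

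The induction in your Step~2, however, contains a genuine error. The displayed bound
$\sum_{m=K+1}^{n-2}\overline F(n-m-1)\overline F(x_0+m)\le\varepsilon'\overline F(n)$
is false: for $m$ close to $n$ the factor $\overline F(n-m-1)$ is not small. The single term $m=n-2$ already equals $\overline F(1)\overline F(x_0+n-2)\sim\overline F(1)\overline F(n)$, a fixed positive multiple of $\overline F(n)$, and summing $j=n-m-1$ over $1\le j\le K'$ gives a contribution of order $\bigl(\sum_{j\le K'}\overline F(j)\bigr)\overline F(n)$, which tends to roughly $\mu_+\overline F(n)$ as $K'\to\infty$. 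What $\mathcal S^*$ actually gives is smallness of the \emph{middle} range $K<m\le n-K$ (exactly the statement in your ``Main obstacle'' paragraph, with upper limit $n-K$), not of the whole range $K<m\le n-2$. With the induction bound $v_{n-m-1}\le C\overline F(n-m-1)$ used across all of $K<m\le n-2$, the right endpoint then contributes about $C\,p_\infty\bigl(\sum_j\overline F(j)\bigr)\overline F(n)$, and since $\sum_j\overline F(j)\approx\e\eta_1^+$ need not be less than $1$, the resulting contraction constant $\rho$ need not be below $1$; the induction does not close.

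The fix is to split into three ranges and, on the right endpoint $n-K\le m\le n-2$, use the trivial bound $v_{n-m-1}\le1$ rather than the induction hypothesis: this range contributes at most $(1+\varepsilon)p_\infty\bigl(\sum_{j<K}v_j\bigr)\overline F(n)$, and $\sum_j v_j=\e_{x_0+1}[T^{(R)}_{x_0}]-1<\infty$ by the positive recurrence established in Section~6.1. That contribution is a fixed constant times $\overline F(n)$, independent of $C$, so it is absorbed on the right side and the induction closes after enlarging $C$. This extra input (finiteness of $\e_x[T^{(R)}_{x_0}]$, i.e.\ integrability of $v_j$) is the piece your argument is missing; the paper sidesteps it by appealing to the subexponential renewal theory of~\cite{AFK03}, which handles both endpoint contributions through the generating-function identity $\widehat w=\widehat c/(1-s\widehat d)$.
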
   
\begin{proof}
Note that it follows from~\eqref{eq:tail.8} that $v_n\le w_n$, where the sequence $\{w_n\}$ is given by 
    $w_1=\pr(\eta_1>x_0)$  and for $n\ge 2$, 
    \begin{align*}
        w_n&=\pr(\eta_1>x_0+n-1)+ 
        w_{n-1} \pr(\eta_1> x_0) \\ 
        \nonumber 
        &+\sum_{m=1}^{n-2} w_{n-m-1}\pr(\eta_1> x_0+m)
        \prod_{j=0}^{m-1} 
            \pr(\eta_1\le x_0+j). 
    \end{align*}    
Set $d_0=\pr(\eta_1> x_0)$ and 
\[
d_m :=  \pr(\eta_1> x_0+m)
\prod_{j=0}^{m-1} 
    \pr(\eta_1\le x_0+j),\quad m\ge 1.  
\]
Set also $c_n=\pr(\eta_1>x_0+n-1)$, $n\ge1$
Then we have $w_1=c_1$ and
\begin{align}
\label{defn:wn}
w_n=c_n+\sum_{m=0}^{n-2}w_{n-m-1}d_m,\quad n\ge2.
\end{align}
Using~\eqref{defn:wn}, we obtain the following equality for generating functions:
\begin{align*}
\sum_{n=1}^\infty w_ns^n
&=\sum_{n=1}^\infty c_ns^n
+\sum_{n=2}^\infty s^n\sum_{m=0}^{n-2}w_{n-m-1}d_m\\
&=\sum_{n=1}^\infty c_ns^n
+s\sum_{m=0}^\infty d_ms^m\sum_{n=m+2}^\infty w_{n-m-1}s^{n-m-1}\\
&=\sum_{n=1}^\infty c_ns^n
+s\sum_{m=0}^\infty d_ms^m\left(\sum_{n=1}^\infty w_ns^n\right).
\end{align*}    
Set, for brevity, 
$$
\widehat{w}(s)=\sum_{n=1}^\infty w_ns^n,\ 
\widehat{c}(s)=\sum_{n=1}^\infty c_ns^n,\text{ and }
\widehat{d}(s)=\sum_{n=0}^\infty d_ns^n. 
$$
Then we have 
$$
\widehat{w}(s)=\widehat{c}(s)+s\widehat{d}(s)\widehat{w}(s).
$$
Solving this equality we obtain 
\[
\widehat w(s) = \frac{\widehat c(s)}{1-s \widehat d(s)}. 
\]
Noting that
\begin{align*}
d_m
&=(1-\pr(\eta_1\le x_0+m))\prod_{j=0}^{m-1}\pr(\eta_1\le x_0+j))\\
&=\prod_{j=0}^{m-1}\pr(\eta_1\le x_0+j))-\prod_{j=0}^{m}\pr(\eta_1\le x_0+j)),
\end{align*}
we get 
\begin{align*}
\sum_{m=0}^\infty d_m
=1-\prod_{j=0}^{\infty}\pr(\eta_1\le x_0+j))<1,
\end{align*}
where the last inequality follows from the assumption $\e\eta<\infty$.
Also it is clear that 
\[
\frac{d_{n+1}}{d_n}=
\pr(\eta_1\le n+x_0)\to 1, 
\quad n\to\infty
\]
and 
\[
d_n\sim \pr(\eta_1>n)
\prod_{j=0}^\infty \pr(\eta_1\le x_0+j).
\]
Since $F\in \mathcal S^*$ 
we can see that $(d_n)_{n\ge 0}$ is a subexponential sequence.

Then, it follows from the results in  the theory of locally subexponential distributions (see Corollary~2 and Proposition~4 in~\cite{AFK03})  that 
$\frac{1}{1-s \widehat d(s)}$ is a  generating function of subexponential 
sequence behaving like $C_2\pr(\eta_1>n)$. 
The same  statement holds for  $\widehat c(s)$. 
Hence $w_n$ is obtained as a convolution of two subexponential sequences 
asymptotically equivalent to $C_1\pr(\eta_1>n)$ and $C_2\pr(\eta_1>n)$ and therefore 
behaves as $C_3\pr(\eta_1>n)$ for some $C_3$. 
This implies the statement of the lemma. 
\end{proof}    

In the following lemma we complete the proof of Theorem~\ref{thm:subex_for_R}.
\begin{lemma}\label{lem:v_n}
Assume that $F\in \mathcal S^*$, where $F(x)=\pr(\eta_1\le x)$. 
Then, for any $x>x_0$, 
\[
\pr_x(T^{(R)}_{x_0}>n)\sim u(x)\pr(\eta_1>n),\quad n\to \infty,      
\]
where the function $u(x)=\e_x[T^{(R)}_{x_0}]$ has been computed in 
\eqref{eq:mean.hitting.3}.
\end{lemma}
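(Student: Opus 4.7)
The plan is to reduce to the case $x=x_0+1$ by combining \eqref{eq:tail.9} with the explicit formula \eqref{eq:mean.hitting.3}, and then to analyse $v_n=v(n,0)=\pr_{x_0+1}(T^{(R)}_{x_0}>n)$ via a near-renewal form of \eqref{eq:tail.8}. Write $\bar F(y):=\pr(\eta_1>y)$, $D_m:=\prod_{j=0}^{m-1}\pr(\eta_1\le x_0+j)$, and $D_\infty:=\lim_{m\to\infty}D_m=\prod_{j=0}^{\infty}\pr(\eta_1\le x_0+j)$; the positivity of $D_\infty$ and the identity $D_\infty=1/u(x_0+1)$ have been established above. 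For $x\in(x_0+k,x_0+k+1]$, \eqref{eq:tail.9} reads $\pr_x(T^{(R)}_{x_0}>n)=v_n+\sum_{m=1}^{k}v_{n-m}D_m$, and since $F\in\mathcal S^*$ is long-tailed we have $\bar F(n-m)\sim\bar F(n)$ for each fixed $m$. Hence, once we establish $v_n\sim u(x_0+1)\bar F(n)$, comparison with \eqref{eq:mean.hitting.3} immediately yields $\pr_x(T^{(R)}_{x_0}>n)\sim u(x_0+1)(1+\sum_{m=1}^{k}D_m)\bar F(n)=u(x)\bar F(n)$, as claimed.

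To study $v_n$, I would expand $\pr(\eta_1\in(x_0+m,x_0+n-1])=\bar F(x_0+m)-\bar F(x_0+n-1)$ in \eqref{eq:tail.8} and collect terms to get
\[
v_n=\bar F(x_0+n-1)\Bigl(1-\sum_{m=0}^{n-2}v_{n-m-1}D_m\Bigr)+\sum_{m=0}^{n-2}v_{n-m-1}d_m,
\]
where $d_m:=\bar F(x_0+m)D_m$ satisfies $\sum_{m\ge 0}d_m=1-D_\infty$ and $d_m\sim D_\infty\bar F(m)$. Because $\sum_{k\ge 0}v_k=\e_{x_0+1}[T^{(R)}_{x_0}]=u(x_0+1)<\infty$ and $D_m\downarrow D_\infty$, dominated convergence gives $\sum_{m=0}^{n-2}v_{n-m-1}D_m\to D_\infty(u(x_0+1)-1)=1-D_\infty$, so the source term is $\sim D_\infty\bar F(x_0+n-1)\sim\bar F(n)/u(x_0+1)$.

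For the convolution $\sum_{m=0}^{n-2}v_{n-m-1}d_m$, the assumption $F\in\mathcal S^*$ combined with the bound $v_n\le C\bar F(n)$ from Lemma~\ref{lem:tail.upper} allows one to apply standard local subexponential convolution asymptotics (in the spirit of Corollary~2 and Proposition~4 of \cite{AFK03}). Postulating $v_n\sim K\bar F(n)$, the formula $(v*d)_n\sim(KS_d+S_vC_d)\bar F(n)$ with $S_d=1-D_\infty$, $S_v=u(x_0+1)$, $C_d=D_\infty$ gives $(v*d)_n\sim(1+K(1-D_\infty))\bar F(n)$; subtracting the missing boundary term $v_0d_{n-1}\sim D_\infty\bar F(n)$ and adding the source contribution $D_\infty\bar F(n)$ reduces the recursion to the fixed-point equation $K=1+K(1-D_\infty)$, whose unique solution is $K=1/D_\infty=u(x_0+1)$.

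The main obstacle is justifying this convolution step rigorously using only the one-sided bound $v_n\le C\bar F(n)$, without a priori existence of $\lim v_n/\bar F(n)$. I would handle it by a sandwich argument: set $K^*:=\limsup_n v_n/\bar F(n)\le C$ and $K_*:=\liminf_n v_n/\bar F(n)\ge 0$, split the convolution at a truncation $M$ into head ($m\le M$), tail ($m\ge n-M$) and intermediate ($M<m<n-M$) pieces, and take $\limsup$ and $\liminf$ separately. The defining property of $\mathcal S^*$ is precisely what makes the intermediate piece $o(\bar F(n))$ as $n\to\infty$ and then $M\to\infty$; the head contributes at most $K^*(1-D_\infty)\bar F(n)$ in the $\limsup$ (respectively at least $K_*(1-D_\infty)\bar F(n)$ in the $\liminf$), and the tail converges to $D_\infty(u(x_0+1)-1)\bar F(n)=(1-D_\infty)\bar F(n)$ regardless of $K$. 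Adding up, one obtains $K^*\le 1+K^*(1-D_\infty)$ and $K_*\ge 1+K_*(1-D_\infty)$, i.e.\ $K^*D_\infty\le 1\le K_*D_\infty$, which together with $K_*\le K^*$ forces $K^*=K_*=1/D_\infty=u(x_0+1)$ and completes the proof.
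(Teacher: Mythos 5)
Your proof is correct, but the route through the upper and (especially) the lower bound is genuinely different from the paper's. The paper first derives the lower bound $\liminf_n \pr_x(T^{(R)}_{x_0}>n)/\pr(\eta_1>n)\ge u(x)$ by an inclusion--exclusion argument on the events $\{T^{(R)}_{x_0}>k-1,\ \eta_k>x_0+n\}$, and then, separately, derives the matching upper bound by truncating the recursion \eqref{eq:tail.8} at a fixed lag $N$, introducing an auxiliary sequence $w_n^{(N)}$ with a finite-memory recursion, and computing its generating function to show $w_n^{(N)}\sim (1+\varepsilon)(1-\sum_{m\le N}d_m)^{-1}c_n$; the constant is then squeezed by sending $N\to\infty$ and $\varepsilon\to0$. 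You instead rewrite \eqref{eq:tail.8} once, in the near-renewal form with kernel $d_m=\bar F(x_0+m)D_m$ plus a source term, and extract both $\limsup$ and $\liminf$ of $v_n/\bar F(n)$ from the same decomposition of the convolution into head, middle, and tail, with the $\mathcal S^*$ property killing the middle and the a priori bound $v_n\le C\bar F(n)$ from Lemma~\ref{lem:tail.upper} making the $\limsup$/$\liminf$ ratios $K^*,K_*$ finite. Both arguments lean on Lemma~\ref{lem:tail.upper} and on the subexponential convolution estimates of \cite{AFK03}; what your version buys is a unified treatment that avoids the separate inclusion--exclusion step and the auxiliary generating-function bookkeeping, at the cost of having to be slightly careful that $\limsup$ and $\liminf$ behave correctly when added across the three pieces (which they do, since the source and tail pieces have genuine limits and the middle piece is nonnegative and $o(\bar F(n))$). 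The reduction from $v_n$ to general starting point $x$ via \eqref{eq:tail.9}, \eqref{eq:mean.hitting.3} and long-tailedness is the same in both.
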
   
\begin{proof}
First we derive a lower bound. For every $N\ge1$ one has 
\begin{align*}
\{T^{(R)}_{x_0}>n\}\supseteq
&\bigcup_{k=1}^N\{T^{(R)}_{x_0}>n,\eta_k>x_0+n\}\\
&=\bigcup_{k=1}^N\{T^{(R)}_{x_0}>k-1,\eta_k>x_0+n\}.
\end{align*}
Therefore, by the inclusion-exclusion argument,
\begin{align*}
\pr_x(T^{(R)}_{x_0}>n)
&\ge\sum_{k=1}^N\pr_x(T^{(R)}_{x_0}>k-1)\pr(\eta_k>x_0+n)\\
&\hspace{1cm}
-\sum_{k=1}^{N-1}\pr_x(T^{(R)}_{x_0}>k-1)\pr(\eta_k>x_0+n)
\sum_{j=k+1}^N\pr(\eta_j>x_0+n)\\
&\ge(1-N\pr(\eta_1>x_0+n))\pr(\eta_1>x_0+n)\sum_{k=1}^N\pr_x(T^{(R)}_{x_0}>k-1).
\end{align*}
This implies that  
$$
\liminf_{n\to\infty}\frac{\pr_x(T^{(R)}_{x_0}>n)}{\pr(\eta_1>x_0+n)}
\ge\sum_{k=1}^N\pr_x(T^{(R)}_{x_0}>k-1).
$$

Letting $N$ to infinity we obtain
\begin{equation}\label{eq:tail.lower}
\liminf_{n\to\infty}\frac{\pr_x(T^{(R)}_{x_0}>n)}{\pr(\eta_1>x_0+n)}
\ge\sum_{k=1}^\infty\pr_x(T^{(R)}_{x_0}>k-1)=\e_x[T^{(R)}_{x_0}]=u(x).
\end{equation}
 
 We next derive the corresponding asymptotic precise upper bound for $v_n$.  Fix $\varepsilon>0$. From Lemma~\ref{lem:tail.upper} and from the subexponentiality of $\pr(\eta_1>x_0+n)$ we conclude that there exists $N$ such that 
 \[
 \sum_{m=N+1}^{n-N}
 v_{n-m-1}\pr(\eta_1\in (x_0+m,x_0+{n-1}])
 \le 
 C 
 \sum_{m=N}^{n-N}
 \overline F(n-m)\overline F(m)
 \le \frac{\varepsilon}{2} \overline F(x_0+n)
 \]
 for all $n\ge 2N$.
 Also, since $F\in \mathcal S^*$ for any fixed $i$, 
 $$
 \pr(\eta_1\in (x_0+n-i,x_0+n])=o(\overline F(n))
 $$
 and therefore for all $n\ge 2N$,
 \[
 \sum_{m=n-N}^{n-2}
 v_{n-m-1}\pr(\eta_1\in (x_0+m,x_0+{n-1}])
 \le \frac{\varepsilon}{2} \overline F(x_0+n).
 \]
 Combining these estimates with the representation \eqref{eq:tail.8}, we get 
 $$
 v_n\le (1+\varepsilon)c_n+\sum_{m=0}^N d_mv_{n-m-1},
 \quad n\ge 2N,
 $$
 where the sequences $\{c_n\}$ and ${d_n}$ are defined in the proof of Lemma~\ref{lem:tail.upper}.
 
Set now $w_n^{(N)}=v_n$ for $n<2N$ and 
$$
 w_n^{(N)}= (1+\varepsilon)\pr(\eta_1>x_0+n-1)
 +\sum_{m=0}^N d_mw^{(N)}_{n-m-1}.
$$
Clearly $v_n\le w_n^{(N)}$ for all $n$. 

Set also $\widehat{w}^{(N)}(s)=\sum_{n=2N}^\infty w_n^{(N)}s^n$
and $\widehat{c}^{(N)}(s)=\sum_{n=2N}^\infty c_ns^n$.
Then one has 
\begin{align*}
\widehat{w}^{(N)}(s)
&=(1+\varepsilon)\widehat{c}^{(N)}(s)
+s\sum_{m=0}^N d_ms^m\sum_{n=2N}^\infty w^{(N)}_{n-m-1}s^{n-m-1}\\
&=(1+\varepsilon)\widehat{c}^{(N)}(s)
+s\sum_{m=0}^N d_ms^m\sum_{n=2N-m-1}^{2N-1}w^{(N)}_{n-m-1}s^{n-m-1}\\
&\hspace{2cm}+s\widehat{w}^{(N)}(s)\sum_{m=0}^N d_ms^m.
\end{align*}
Therefore,
$$
\widehat{w}^{(N)}(s)
=\frac{(1+\varepsilon)\widehat{c}^{(N)}(s)
+s\sum_{m=0}^N d_ms^m\sum_{n=2N-m-1}^{2N-1}w^{(N)}_{n-m-1}s^{n-m-1}}{1-s\sum_{m=0}^N d_ms^m}.
$$
This implies that 
$$
w_n^{(N)}\sim\left(\frac{1+\varepsilon}{1-\sum_{m=0}^Nd_m}\right)c_n.
$$
Consequently,
$$
\limsup_{n\to\infty}\frac{v_n}{c_n}\le \frac{1+\varepsilon}{1-\sum_{m=0}^Nd_m}\le \frac{1+\varepsilon}{1-\sum_{m=0}^\infty d_m}
=(1+\varepsilon)u(x_0+1).
$$
Letting $\varepsilon\to0$, we get 
$$
\limsup_{n\to\infty}\frac{v_n}{c_n}\le u(x_0+1).
$$

Combining this with the lower bound \eqref{eq:tail.lower}, we conclude that
\begin{equation}
\label{eq:vn-asymp}
v_n\sim u(x_0+1)\pr(\eta_1>n),\quad n\to\infty.
\end{equation}

Assume now that $x\in(x_0+k,x_0+k+1]$ for some $k\ge1$. Then, by Proposition~\ref{prop:tail.recursion},
\begin{align*}
\pr(T^{(R)}_{x_0}>n)
=v_n+\sum_{m=1}^kv_{n-m}\prod_{j=0}^{m-1}\pr(\eta_1\le x_0+j).
\end{align*}
Using now \eqref{eq:vn-asymp}, we conclude that 
$$
\pr(T^{(R)}_{x_0}>n)\sim u(x_0+1)\left(1+\sum_{m=1}^k\prod_{j=0}^{m-1}\pr(\eta_1\le x_0+j)\right)\pr(\eta>n).
$$
Noting that $u(x_0+1)\left(1+\sum_{m=1}^k\prod_{j=0}^{m-1}\pr(\eta_1\le x_0+j)\right)=u(x)$ we complete the proof.
\end{proof}
\section{Proof of Theorem~\ref{thm:subex_for_X}}
\label{sec:subex_for_X}
The lower bound for the tail of $T_{x_0}^{(X)}$ can be obtained by exactly the same arguments as the lower bound for $T_{x_0}^{(R)}$ in Lemma~\ref{lem:v_n}.

We turn to the corresponding upper bound. Set 
$c=\frac{1}{2\sum_{j=1}^\infty j^{-2}}$. For every $y\ge x_0$ we define the events
$$
\left\{\xi_k\le \frac{A^{n-k}}{(n-k+1)^2}cy\right\},\quad k\le n.
$$
On the intersection of these sets one has
\begin{align*}
X_n&=a^nX_0+\sum_{k=1}a^{n-k}\xi_k\\
&\le a^nX_0+\sum_{k=1}^n\frac{cy}{(n-k+1)^2}
\le a^nX_0+y/2.
\end{align*}
If $n$ is sufficiently large, say $n\ge n_0=n_0(X_0)$ then we infer that
$X_n\le y$. Therefore,
\begin{equation}
\label{eq:ub1}
\pr_x(X_n>y,T_{x_0}^{(X)}>n)
\le\sum_{k=1}^n\pr_x(T_{x_0}^{(X)}>k-1) c_{n-k}(y),
\quad n\ge n_0,
\end{equation}
where
$$
c_j(y):=\pr\left(\xi_1>\frac{A^{j}}{(j+1)^2}cy\right),\quad j\ge0.
$$

We first use this estimate with $y=x_0$. In this case we have
$$
\pr_x(T_{x_0}^{(X)}>n)
\le\sum_{k=1}^n\pr_x(T_{x_0}^{(X)}>k-1) c_{n-k}(x_0),
\quad n\ge n_0.
$$
Consider the sequence $\{w_n\}$ which is defined via the
recursion 
$$
w_n=\sum_{k=1}^nw_{k-1} c_{n-k}(x_0)
$$
with initial condition $w_0=w_1=\ldots=w_{n_0-1}=1$. Then clearly 
\begin{equation}
\label{eq:ub2}
\pr_x(T_{x_0}^{(X)}>n)\le w_n,\quad n\ge0.
\end{equation}
It is immediate from the definition of $\{w_n\}$ that 
\begin{align*}
\sum_{n=n_0}^\infty w_ns^n
&=\sum_{n=n_0}^\infty s^n\sum_{k=1}^nw_{k-1} c_{n-k}(x_0)\\
&=\sum_{n=n_0}^\infty s^n\sum_{k=1}^{n_0-1}w_{k-1} c_{n-k}(x_0)
+\sum_{n=n_0}^\infty s^n\sum_{k=n_0}^nw_{k-1} c_{n-k}(x_0)
\end{align*}
Setting
$$
d_n(x_0):=\sum_{k=1}^{n_0-1}w_{k-1} c_{n-k}(x_0)
$$
and interchanging the order of summation in the second series, we
conclude that 
$$
\sum_{n=n_0}^\infty w_ns^n
=\frac{\sum_{n=n_0}^\infty d_n(x_0)s^n}{1-s\sum_{j=0}^\infty c_j(x_0)s^j}.
$$
Using once again the results from \cite{AFK03}, we infer that 
$$
w_n\sim C\pr(\eta_1>n)
$$
provided that $\sum_{j=0}^\infty c_j(x_0)<1$. Combining this with \eqref{eq:ub2}, we obtain 
\begin{equation}
\label{eq:ub3}
\pr_x(T_{x_0}^{(X)}>n)\le C\pr(\eta_1>n),\quad n\ge0.
\end{equation}
Using Lemma~\ref{lem:down}, we conclude that \eqref{eq:ub3} is valid for all $x_0$ such that $\pr(ax_0+\xi_1<x_0)$ is strictly positive.

Combining now \eqref{eq:ub1},\eqref{eq:ub3} and recalling that the sequences $\pr(\eta_1>n)$ and $c_n(y)$ are subexponential, we conclude that 
\begin{equation}
\label{eq:ub4}
\limsup_{n\to\infty}\frac{\pr_x(X_n>y,T_{x_0}^{(X)}>n)}{\pr(\eta_1>n)}
\le \e_x[T_{x_0}^{(X)}]+C(y),
\end{equation}
where
$$
C(y):=\sum_{k=0}^\infty c_k(y).
$$
This quantity is finite due to the assumption $\e\eta_1<\infty$. Furthermore,
$C(y)\to0$ as $y\to\infty$.

Fix now a integer-valued sequence $N_n\to\infty$ such that
$\pr(\eta_1>n)\sim\pr(\eta_1>n-N_n)$. By the monotonicity of the chain
$\{X_n\}$,
\begin{align*}
&\pr_x(T_{x_0}^{(X)}>n)\\
&=\pr_x(X_{n-N_n}>y,T_{x_0}^{(X)}>n)+\pr_x(X_{n-N_n}\le y,T_{x_0}^{(X)}>n)\\
&\le \pr_x(X_{n-N_n}>y,T_{x_0}^{(X)}>n-N_n)
+\pr_x(T_{x_0}^{(X)}>n-N_n)\pr_y(T_{x_0}^{(X)}>N_n).
\end{align*}
Applying \eqref{eq:ub3} and \eqref{eq:ub4}, we get 
\begin{align*}
\limsup_{n\to\infty}\frac{\pr_x(T_{x_0}^{(X)}>n)}{\pr(\eta_1>n)}
&\le \e_x[T_{x_0}^{(X)}]+C(y)+C\lim_{n\to\infty}\pr_y(T_{x_0}^{(X)}>N_n)\\
&=\e_x[T_{x_0}^{(X)}]+C(y).
\end{align*}
Letting now $y\to\infty$ and recalling that $\lim_{y\to\infty}C(y)=0$, we finally obtain
$$
\limsup_{n\to\infty}\frac{\pr_x(T_{x_0}^{(X)}>n)}{\pr(\eta_1>n)}
\le \e_x[T_{x_0}^{(X)}].
$$
Thus, the proof is complete.

\end{document}